


\documentclass[12pt]{amsart}

\usepackage{mathtools}
\mathtoolsset{showonlyrefs=true}
\usepackage[hmargin=0.8in,height=8.6in]{geometry}
\usepackage{amssymb,amsthm, times}
\usepackage{delarray,verbatim}
\usepackage{ifpdf}
\ifpdf
\usepackage[pdftex]{graphicx}
\DeclareGraphicsRule{*}{mps}{*}{} \else
\usepackage[dvips]{graphicx}
\DeclareGraphicsRule{*}{eps}{*}{} \fi

\usepackage{bm}

\linespread{1.20}

\usepackage{mathtools}
\mathtoolsset{showonlyrefs=true}

\usepackage{natbib}
 %
 %
 %
 %
 %
 \bibpunct[, ]{[}{]}{,}{n}{}{,}%

\usepackage[colorlinks=true,breaklinks=true,bookmarks=true,urlcolor=blue,
     citecolor=blue,linkcolor=blue,bookmarksopen=false,draft=false]{hyperref}

\def\EMAIL#1{\href{mailto:#1}{#1}}
\def\URL#1{\href{#1}{#1}}         




\begin{document}

\title{{On singular control for L\'evy processes}}

	\thanks{This version: \today.   }

\author[K. Noba]{Kei Noba$^*$}
\thanks{$*$\, School of Statistical Thinking, The Institute of Statistical Mathematics, 10-3 Midori-cho, Tachikawa-shi, Tokyo 190-8562, Japan, \EMAIL{knoba@ism.ac.jp}, \URL{https://sites.google.com/view/knoba}}
\author[K. Yamazaki]{Kazutoshi Yamazaki$^\dagger$}
\thanks{$\dagger$\, School of 
Mathematics and Physics, The University of Queensland, St Lucia, Brisbane, QLD 4072, Australia, \EMAIL{k.yamazaki@uq.edu.au}, \URL{https://sites.google.com/site/kyamazak/ }}
\date{}

\newcommand{\absol}[1]{\left| #1 \right|} 
\newcommand{\norm}[1]{\left\| #1 \right\|} 
\newcommand{\rbra}[1]{\!\left( #1 \right)} 
\newcommand{\cbra}[1]{\!\left\{ #1 \right\}} 
\newcommand{\sbra}[1]{\!\left[ #1 \right]} 
\newcommand{\abra}[1]{\!\left\langle #1 \right\rangle} 
\newcommand{\floor}[1]{\!\left\lfloor #1 \right\rfloor \!} 
\newcommand{\ceil}[1]{\!\left\lceil #1 \right\rceil} 

\newcommand{\cA}{\ensuremath{\mathcal{A}}}

\newcommand{\bE}{\ensuremath{\mathbb{E}}}
\newcommand{\bP}{\ensuremath{\mathbb{P}}}
\newcommand{\bR}{\ensuremath{\mathbb{R}}}

\newcommand{\bN}{\ensuremath{\mathbb{N}}}

\newcommand {\ME}{\mathbb{E}^{x}}
\newcommand {\MEX}{\mathbb{E}^{X_{T_1}}}
\newcommand {\MEt}{\mathbb{E}^{x}_{t}}
\newcommand {\uEt}{\tilde{\mathbbm{E}}}
\newcommand {\ud}{{\rm d}}
\newcommand {\bracks}[1]{\left[#1\right]}
\newcommand {\parens}[1]{\left(#1\right)}
\newcommand {\braces}[1]{\left\{#1\right\}}
\newcommand {\I}[1]{\mathbbm{1}_{\{#1\}}}
\newcommand {\myBox}{\hspace{\stretch{1}}$\diamondsuit$}
\renewcommand{\S}{\mathcal{S}}
\renewcommand{\theequation}{\thesection. \arabic{equation}}

\newcommand{\un}[1]{\underline{#1}}

\newcommand{\lapinv}{\Phi(q)}
\newcommand {\B}{\mathcal{B}}

\newcommand {\ph}{\hat{p}}
\newcommand {\ah}{\hat{C}}
\newcommand {\pcheck}{\tilde{p}}
\newcommand {\acheck}{\tilde{C}}

\newcommand{\cL}{\ensuremath{\mathcal{L}}}
\newcommand{\cN}{\ensuremath{\mathcal{N}}}
\newcommand{\cB}{\ensuremath{\mathcal{B}}}
\newcommand{\cF}{\ensuremath{\mathcal{F}}}

\newcommand {\R}{\mathbb{R}}
\newcommand {\D}{e^{-C\tau}}
\newcommand {\F}{\mathcal{F}}
\newcommand {\A}{\mathcal{A}}
\newcommand {\N}{\mathbb{N}}
\newcommand {\p}{\mathbb{P}}
\newcommand {\G}{\mathcal{G}}
\newcommand {\E}{\mathbb{E}}
\newcommand {\T}{\blue{\mathcal{R}}}
\newcommand {\EE}{\mathcal{E}}
\newcommand {\QV}{\langle M, M\rangle}
\newcommand {\QVN}{\langle N, N\rangle}
\newcommand {\CV}{\langle M, N\rangle}
\newcommand {\SQ}{[X]}
\newcommand {\M}{\mathcal{M}}
\newcommand {\X}{M=(M_t)_{t\in \R_+}}
\newcommand {\MM}{\mathcal{M}_2}
\newcommand {\MMc}{\mathcal{M}_2^c}
\newcommand {\LO}{\mathcal{M}^{loc}}
\newcommand {\Lc}{\mathcal{M}^{c, loc}}
\newcommand {\IM}{I_t^M(X)}
\newcommand {\IN}{I_t^N(Y)}
\newcommand{\qvIM}{\langle I^M(X), I^M(X)\rangle}
\newcommand{\qvIN}{\langle I^N(Y), I^N(Y)\rangle}
\newcommand {\LIM}{\lim_{n\rightarrow \infty}}
\newcommand{\Pstar}{\mathcal{P}^*}
\newcommand{\diff}{{\rm d}}
\newcommand{\disc}{e^{-C T}}
\newcommand{\nn}{\nonumber}
\newcommand{\U}{U^C f}
\newcommand{\lev}{L\'{e}vy }
\newcommand{\zq}{e^{-\Phi(q) y}}
\newcommand{\red}{\textcolor[rgb]{1.00,0.00,0.00}}
\newcommand{\blue}{\textcolor[rgb]{0.00,0.00,1.00}}
\newcommand{\green}{\textcolor[rgb]{0.00,0.50,0.50}}
\newcommand{\px}{\mathbb{P}_x}
\newcommand{\e}{\mathbb{E}}
\newcommand{\ex}{\mathbb{E}_x}
\newcommand{\eq}{\mathbb{E}^{\mathbb{Q}}}
\newcommand{\eqx}{\mathbb{E}^{\mathbb{Q}}_x}
\newcommand{\pf}{\mathbb{P}^\uparrow}
\newcommand{\pfx}{\mathbb{P}_{x}^\uparrow}
\newcommand{\ix}{\underline{X}}
\newcommand{\W}{\mathbb{W}}

\renewcommand{\labelenumi}{(\roman{enumi})}

\newcommand{\EQ}{\rm{e}_q}

\def\TheoremsNumberedThrough{%
\theoremstyle{TH}%
\newtheorem{theorem}{Theorem}
\newtheorem{lemma}{Lemma}
\newtheorem{proposition}{Proposition}
\newtheorem{corollary}{Corollary}
\newtheorem{claim}{Claim}
\newtheorem{conjecture}{Conjecture}
\newtheorem{hypothesis}{Hypothesis}
\newtheorem{assumption}{Assumption}
\theoremstyle{EX}
\newtheorem{remark}{Remark}
\newtheorem{example}{Example}
\newtheorem{problem}{Problem}
\newtheorem{definition}{Definition}
\newtheorem{question}{Question}
\newtheorem{answer}{Answer}
\newtheorem{exercise}{Exercise}
}

\newtheorem{assump}{Assumption}
\newtheorem{remark}{Remark}
\newtheorem{lemma}{Lemma}
\newtheorem{example}{Example}
\newtheorem{theorem}{Theorem}
\newtheorem{proposition}{Proposition}

\begin{abstract}%
We revisit the classical singular control problem of minimizing running and controlling costs.
Existing studies have shown the optimality of a barrier strategy when driven by Brownian motion or \lev processes with one-sided jumps. Under the assumption that the running cost function is convex, we show the optimality of a barrier strategy for a general class of \lev processes. 
\\
\noindent \small{\noindent  {AMS 2020} Subject Classifications: Primary 60G51; Secondary 93E20, 90B05 \\
\textbf{Keywords:} stochastic control, mathematical finance, \lev processes}
\end{abstract}




\maketitle

\section{Introduction} \label{section_introduction}

In this paper,  we consider the classical stochastic control problem driven by a one-dimensional stochastic process, whose  objective is to derive an optimal strategy that minimizes the sum of running and controlling costs.  
The running cost is modeled as a function of the controlled process accumulated over time. The controlling cost is assumed to be proportional to the amount of control, meaning the considered problem falls in the class of singular control.
  As exemplified by the seminal work such as \cite{Benes}, the optimal strategy is often verified to be of barrier (or bang-bang) type. In other words, controlling the process so that it stays in an interval or a half-line is optimal. Singular control has many applications and is actively studied in financial mathematics, such as \cite{Cai,   Mundaca}. A version with ruin, called de Finetti's problem, is one focus of research in insurance mathematics \cite{AvrPalPis2007, Loeffen}. 



This paper focuses on the following formulation:
Given a state process $X = (X_t)_{t \geq 0}$, the decision-maker chooses a {strategy} $\pi = (R_t^\pi)_{t \geq 0}$ representing the aggregate amount of control until time $t$. The objective is to  minimize the total expected values of the running cost $\int_0^\infty e^{-qt} f(U_t^\pi) \diff t$ and the controlling cost  $\int_{[0, \infty)} e^{-qt} \diff R_t^\pi$ where $q >0$ is a discount factor and $U^\pi := X+R^\pi$ is the controlled process when a strategy $\pi$ is applied.  This study looks at the case when $X$ is a general \lev process and $\pi$ is increasing.


In this classical formulation, complete solutions have been obtained for several stochastic processes. A majority of research focus on the case $X$ is a Brownian motion or other diffusion processes; analytical results are obtained in more general settings, such as in \cite{Benes} (see also \cite{Guo2, Guo, Ma,Ma2, Merhi}  for other stochastic control problems for diffusion processes). The case $X$ with jumps is less studied compared to Brownian motion/diffusion models. However, extensions to spectrally one-sided \lev processes (i.e.\ \lev processes with one-sided jumps) have recently been made along with the development of the so-called fluctuation theory.
In particular,  Yamazaki \cite{Yam2017} solved the problem when $X$ is a general spectrally negative \lev process.  Several variations, such as \cite{Hernandez, Perez_Yamazaki_Bensoussan}, consider optimality over restricted sets of admissible {strategies}.
These papers commonly assume the convexity (or a slightly more relaxed condition) of the running cost function $f$ and show that a barrier strategy reflecting the state process at a certain boundary is optimal.  It is a natural conjecture that the same conclusion can be drawn for a wider class. 
%
In this paper, we verify the conjecture that a barrier strategy is optimal for a general \lev process. 
This generalization is particularly important in finance, where the assumption of the one-sided jumps  does not suit  well. Financial asset values are empirically known to contain both positive and negative jumps (e.g., \cite{CGMY}). For realistic applications in finance, the use of stochastic processes having jumps in both directions is indeed desirable.

This paper generalizes the result in the spectrally negative \lev model \cite{Yam2017}. However, 
we take a completely different approach from  \cite{Yam2017} for the proof of optimality of a barrier strategy. In  \cite{Yam2017}, the expected cost under a barrier strategy is written in terms of the scale function. The selection of the optimal barrier and the proof of verification boil down to the analysis of the scale function.  Using the known results on the smoothness and certain martingale properties of the scale function, the optimality of the barrier strategy in \cite{Yam2017} can be shown in a direct way. However, the same methodologies cannot be used for a general \lev process because the scale function is defined only for spectrally one-sided \lev processes and not for a general \lev process.

We show that the optimal barrier, which we call $b^*$ in this paper, can be characterized concisely as the solution to, with $f'_+$ the right-hand derivative of $f$,
\begin{align} \bE_b \left[\int_0^\infty e^{-qt} f^\prime_+(U^{b}_t)\diff t\right] +C = 0 \label{opt_barrier}
\end{align}
where, under $\p_b$, $U^b = (U^b_t)_{t \geq 0}$ is the reflected \lev process with lower barrier $b$ of the \lev process $X$ starting at $b$, and $C \in \mathbb{R}$ is the unit cost/reward of controlling. Thanks to the convexity assumption of $f$, the left-hand side of  \eqref{opt_barrier} is  monotone in $b$, and hence the root $b^*$ can be obtained easily by bisection. 

To show the optimality of the proposed strategy, the key observations are that the derivatives of the expected running cost $\bE_x\left[ \int_0^\infty e^{-qt} f(U^b_t)\diff t   \right]$ and controlling cost
 $\bE_x [ \int_{[0,\infty)} e^{-qt}  \diff R^b_t ]$, 
with respect to the starting value $x$ and barrier $b$, can be written concisely in terms of the first down-crossing time of $X$:
$\tau_0^- := \inf \{ t > 0: X_t < 0 \}$. 
Similar observations were used in the optimal dividend problem with capital injections in \cite{Nob2019}. However, to the best of our knowledge, this is the first result applied to the singular control problem with running costs. 
The conjectured optimal strategy is rigorously verified by showing that the candidate value function satisfies certain smoothness conditions and solves the variational inequalities.  To this end, we first show that the strategy of reflecting the process at $b^*$ is optimal among barrier strategies. Furthermore, its optimality over all admissible strategies is shown via contradiction arguments by adapting the techniques of \cite{AvrPalPis2007}.

This paper primarily aims to show the optimality for a general \lev process without focusing on a particular type. This includes cases when $X$ has paths of bounded and unbounded variations and, thus, can be applied in various settings. Thanks to the explicit and concise expression of the optimal barrier $b^*$ as a solution  to \eqref{opt_barrier}, it can be computed generally via a standard Monte Carlo simulation.

The rest of the paper is organized as follows. Section \ref{Sec02} formulates the problem. Section \ref{section_barrier} defines the barrier strategy and shows the optimality of that with barrier $b^*$ over the set of barrier strategies. Section  \ref{section_verification} shows its optimality over all admissible strategies. The discussions until Section \ref{section_verification} are based on the assumption that the state process is not a driftless compound Poisson process; this is completely relaxed in Section \ref{sec_compound_poisson_case}.  Section \ref{section_conclusion} concludes the paper.
Several long proofs are deferred to the appendix. Numerical results are provided on the author's website.\footnote{\url{https://sites.google.com/site/kyamazak/}}

\section{Preliminalies}\label{Sec02}

\subsection{Problem}

We let $(\Omega , \mathcal{F}, \bP)$ be  a probability space hosting a \lev process $X=\{X_t : t \geq 0 \}$.  
For $x\in\bR$, we denote by $\bP_x$ the law of $X$ when its initial value is $x$. In particular, we denote $\mathbb{P} = \mathbb{P}_0$.  
Throughout the paper, let $\Psi$ be the \emph{characteristic exponent} of $X$ that satisfies 
$
e^{-t\Psi(\lambda)}=
\bE \sbra{e^{i\lambda X_t}}$, for  $\lambda \in \bR$ and $t\geq 0, 
$ which is known to admit the form 
\begin{align*}
\Psi (\lambda) := -i\gamma\lambda +\frac{1}{2}\sigma^2 \lambda^2 
+\int_{\bR \backslash \{0\} } (1-e^{i\lambda z}+i\lambda z1_{\{\absol{z}<1\}}) \Pi(\diff z) , ~~~~~~\lambda\in\bR, 
\end{align*}
where $\gamma\in\bR$, $\sigma\geq 0$, and $\Pi $ is a L\'evy measure on $\bR \backslash \{0\}$ satisfying the integrability condition: 
$
\int_{\bR\backslash \{ 0\}}(1\land z^2) \Pi (\diff z) < \infty. 
$ Recall that the process $X$ has bounded variation paths if and only if $\sigma= 0$ and $\int_{\absol{z}<1} \absol{z}\Pi(\diff z)<\infty$. 


We consider the stochastic control problem with proportional controlling costs (without fixed costs).  
Let $\mathbb{F} := \{ {\mathcal{F}_t}: t \geq 0 \}$ be the natural filtration generated by $X$.
A strategy, representing the cumulative amount of controlling, $\pi=\{ R^\pi_t: t\geq 0\}$, is a nondecreasing, right-continuous, and $\mathbb{F}$-adapted process starting at $R^{\pi}_{0-}=0$.  The corresponding controlled process $U^\pi$ becomes
\begin{align}
U^\pi_t=X_t +R^\pi_t,\quad t\geq 0. \label{def_U_pi}
\end{align}
\par
We fix a discount factor $q > 0$ and a unit cost/reward of controlling $C \in \bR$ {(cost if {it is} positive and reward if negative)}. Associated with each {strategy} $\pi \in \cA$, the running cost is modeled by $\int_0^\infty e^{-qt} f(U^\pi_t)\diff t $ for a measurable running cost function $f : \bR \to\bR$
 and that of controlling is given by $C\int_{[0,\infty)} e^{-qt}\diff R^\pi_t$. The problem is to minimize their expected sum
\begin{align}
v_\pi (x):=\bE_x \left[ \int_0^\infty e^{-qt}f(U^\pi_t)\diff t + C\int_{[0, \infty)}e^{-qt} \diff R^\pi_t \right], \quad x\in\bR,  \label{v_pi}
\end{align}
over the set of all admissible {strategies} $\cA$ that satisfy all the constraints described above and {the integrability condition:}
\begin{align}
\bE_x \left[ \int_0^\infty e^{-qt}\absol{f(U^\pi_t)}\diff t + \int_{[0, \infty)}e^{-qt} \diff R^\pi_t \right]< \infty,\quad x\in\bR. \label{2}
\end{align}
The problem is to compute the value function 
\begin{align}
v(x):=\inf_{\pi\in\cA}v_\pi(x),\quad x\in\bR, \label{1}
\end{align}
and to obtain the optimal {strategy} $\pi^\ast$ that attains it, if such a {strategy} exists.


\subsection{Standing assumptions}

Throughout the paper, we assume the following on the function $f$ {and the unit cost/reward $C$}.  Note that this is commonly assumed in the literature (see, e.g., \cite{Benkherouf, Bensoussan_Liu_Sethi, Hernandez, Yam2017}).

\begin{assump}[Assumption on $f$ and $C$] \label{Ass201}
We assume the following throughout the paper:
\begin{enumerate}
\item The function $f$ is convex. This guarantees that the right- and left-hand derivative $f^\prime_+ (x)$ and $f^\prime_- (x)$, respectively,  exist for all $x \in \R$.
\item The function $f$ has at most polynomial growth in the tail.  That is to say, there exist $k_1$, $k_2$ and $N\in\bN$ such that $|f(x)|\leq k_1+k_2\absol{x}^N$ for all $x \in\bR$. 
\item  We have $f^\prime_+ (-\infty)<-Cq< f^\prime_+(\infty)$ where $f^\prime_+(-\infty):=\lim_{x\to-\infty} f^\prime_+ (x)\in[-\infty,\infty)$ and  $f^\prime_+(\infty):=\lim_{x\to\infty}f^\prime_+(x)\in(-\infty,\infty]$, {which exist by (i).} 
\end{enumerate}
\end{assump}
{
\begin{remark}
Assumption \ref{Ass201}(iii) is required so that the optimal strategy becomes non-trivial. 
\begin{enumerate}
\item Suppose $-Cq \geq f^\prime_+ (\infty)$.
If an amount of $\Delta x$ is increased, then the cost of this action is 
$C \Delta x \leq -  {f^\prime_+ (\infty)}  \Delta x / q= \Delta x \int_0^\infty e^{-qt} \inf_{ y \in \mathbb{R} } (-f^\prime_+ (y)) \diff t$,
meaning that the cost of modifying by $\Delta x$ is always smaller than the resulting reduction of running cost. Consequently, one should take $\Delta x$ arbitrarily large, and an optimal strategy does not exist ($b^* = \infty$).  
\item Suppose $f^\prime_+(-\infty) \geq  -Cq$. Then,
$C \Delta x \geq -  {f^\prime_+ (-\infty)}  \Delta x / q= \Delta x \int_0^\infty e^{-qt} \sup_{ y \in \mathbb{R} }  (-f^\prime_+ (y)) \diff t$,
meaning that the cost of modifying by $\Delta x$ is always higher than the resulting reduction of running cost.  Consequently, $\Delta x$ should be always kept zero.
\end{enumerate}
\end{remark}
}


\begin{assump}[Assumption on $X$]  \label{Ass101}
We assume the following regarding the \lev process $X$. 
\begin{enumerate}
\item 
The process $X$ is not a {driftless} compound Poisson process. 
\item There exists $\bar{\theta}>0$ such that 
$\int_{\bR\backslash(-1,1)}e^{\bar{\theta} \absol{z}}\Pi(\diff z) < \infty$. 
{This and \cite[Theorem 3.6]{Kyp2014} guarantee that $\bE [ e^{\bar{\theta} \absol{X_1}} ] <\infty$ and, since $e^{x}\geq x$ for $x\geq0$, we also have $\bE \sbra{\absol{X_1}}<\infty$.}
\end{enumerate}
\end{assump}

Assumption \ref{Ass101}(i) is assumed only temporarily until the end of Section \ref{section_verification}. However, we will show that the main results hold in the same way for the driftless compound Poisson case in Section \ref{sec_compound_poisson_case}.

For $b \in \bR$, we write the first down-crossing time: 
\begin{align} \label{def_tau_b}
\tau^-_b :=\inf\{ t>0: X_t < b \}.
\end{align}
{As addressed in the introduction, our key tools are the expressions of the derivatives of the expected costs under the barrier strategy in terms of this random variable. }

By Assumption \ref{Ass101}(i), we have the following lemma {(see Section \ref{sec0A} for its proof)}. 
\begin{lemma} \label{Rem201A}
We have the following three facts. 
\begin{enumerate}
\item$0$ is regular for $\bR \backslash \{0\}$ {(i.e.\ $\p (\inf \{ t > 0: X_t \neq 0\}=0)=1$).}
\item For fixed $x \in \bR \backslash\{0\}$, we have $\lim_{b\rightarrow x}\tau^-_b=\tau^-_x$ {$\bP$-a.s.}  {When $x = 0$,} we have $\lim_{b \uparrow 0}\tau^-_b=\tau^-_0$ {(i.e.\ $\tau^-_b$ is left-continuous at $0$) $\bP$-a.s.} 
\item The map $x \mapsto \bE_x  [e^{-q\tau^-_0} ]$ is continuous on $\bR\backslash \{0\}$ and right-continuous at $0$.
\end{enumerate}
 \end{lemma}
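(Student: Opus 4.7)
The plan is to prove (i), (ii), (iii) sequentially, with (iii) following from (ii) by bounded convergence, so the substantive work lies in (i) and (ii).

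For (i), I would invoke the classical characterization from L\'evy process theory that under $\bP = \bP_0$, $0$ is regular for $\bR\setminus\{0\}$ if and only if $X$ is not a driftless compound Poisson process. Since Assumption~\ref{Ass101}(i) excludes exactly this case, (i) is immediate.

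For (ii), I would begin with the monotonicity $b_1 < b_2 \Rightarrow \tau_{b_1}^- \geq \tau_{b_2}^-$, which ensures the one-sided limits $L^\pm(x) := \lim_{b \to x^\mp} \tau_b^-$ exist and satisfy $L^-(x) \geq \tau_x^- \geq L^+(x)$. For left-continuity at every $x$ (in particular $x = 0$): on $\{\tau_x^- < \infty\}$, the infimum definition produces, for any $\varepsilon > 0$, a $t \in [\tau_x^-, \tau_x^- + \varepsilon)$ with $X_t = x - \delta$ for some $\delta > 0$, whence $\tau_b^- \leq t$ for all $b \in (x - \delta, x)$, giving $L^-(x) \leq \tau_x^- + \varepsilon$; the case $\tau_x^- = \infty$ is trivial since then $\tau_b^- = \infty$ for every $b < x$. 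For right-continuity at $x \neq 0$, I rewrite $\tau_b^- = \inf\{t > 0: M_t < b\}$ with $M_t := \inf_{0 \leq s \leq t} X_s$, and observe that $\sup_{b' > b}\tau_{b'}^- = \inf\{t > 0: M_t \leq b\}$, so the two coincide (equivalently $L^+(x)=\tau_x^-$) iff the running infimum $M$ does not have a plateau at level $b = x$. I would then split into cases according to whether $0$ is regular for $(-\infty, 0)$: in the regular case, the strong Markov property combined with (i) forces $X$ to cross any level it touches immediately, so $M$ has no plateau at any level a.s.; in the irregular case, $X$ must have bounded variation with strictly positive drift and no Gaussian part, so $X$ enters $(-\infty, x)$ only by a jump, and for fixed $x \neq 0$ the event ``$X$ jumps to exactly $x$ as a new infimum'' has probability zero by absolute continuity of the law of $X$ just before a jump (arising from the positive drift together with exponentially distributed jump times).

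For (iii), spatial homogeneity of the L\'evy process gives $\bE_x[e^{-q\tau_0^-}] = \bE[e^{-q\tau_{-x}^-}]$, reducing the claim to continuity of $b \mapsto \bE[e^{-q\tau_b^-}]$ on $\bR\setminus\{0\}$ and its left-continuity at $b = 0$. Since $e^{-q\tau_b^-} \in [0, 1]$ is uniformly bounded, the bounded convergence theorem applied to the a.s.\ convergences from (ii) yields both continuity properties, which translate back via the homogeneity to the stated continuity on $\bR\setminus\{0\}$ and right-continuity at $x = 0$.

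The main technical obstacle is verifying the absence of a plateau of $M$ at a fixed $x \neq 0$ in the right-continuity step of (ii): handling both regularity regimes uniformly requires combining the Blumenthal $0$--$1$ law, quasi-left-continuity of L\'evy processes, and (in the bounded variation regime) absolute continuity of the pre-jump law in order to exclude the pathological plateau event on a $\bP$-negligible set.
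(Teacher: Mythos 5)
Your overall plan—prove (i), reduce (ii) to the statement that the running infimum has no plateau at the fixed barrier level, handle (ii) by splitting on regularity of $0$ for $(-\infty,0)$, and deduce (iii) from (ii) by bounded convergence and spatial homogeneity—tracks the paper's proof closely. Part (i) by direct citation of the classical characterization (driftless compound Poisson is the only non-regular case) is a legitimate shortcut; the paper instead derives it from Exercise 6.4 of Kyprianou plus Blumenthal's zero--one law, but both are sound. Parts (iii) and the left-continuity part of (ii) match the paper.

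The gap is in the irregular case of the right-continuity step of (ii). You assert that if $0$ is irregular for $(-\infty,0)$ then $X$ has bounded variation with \emph{strictly positive} drift, and then invoke ``absolute continuity of the law of $X$ just before a jump (arising from the positive drift together with exponentially distributed jump times).'' Neither claim holds in the required generality. First, irregularity for $(-\infty,0)$ only forces bounded variation, $\sigma=0$, and drift $d \geq 0$; the drift can vanish (e.g.\ $X$ a non--compound-Poisson subordinator, or a $d=0$ bounded-variation process for which Rogozin's integral test fails). Second, when the jump activity is infinite there is no sequence of exponentially distributed jump times, so the heuristic for absolute continuity of the pre-jump position collapses; even in the compound-Poisson-with-drift case it needs an actual argument, not an appeal to intuition. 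The paper avoids all of this by passing to the descending ladder height process $\hat H$: irregularity makes $\hat H$ a driftless, finite-intensity subordinator, the ascending ladder potential $U_H$ is shown to have no atoms (since $0$ is regular for $(0,\infty)$ for $H$), Vigon's formula then shows $\Pi_{\hat H}$ has no atoms, and the compensation formula converts ``$\hat H$ jumps exactly onto $-x$ over level $-x$'' into an integral against an atomless measure, yielding probability zero. This is precisely the ingredient your outline flags as the ``main technical obstacle,'' and the absolute-continuity shortcut you propose does not close it. You would need to reproduce (or substitute an equally rigorous replacement for) that ladder-height argument, covering the $d=0$ and infinite-activity regimes.
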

\begin{remark}
\par
Regarding Lemma \ref{Rem201A}, for (i), for the case $X$ is of unbounded variation, it leaves zero immediately by its rapid fluctuation, and for the case $X$ is of bounded variation, it leaves zero immediately to the direction of the drift. Note that when it is a driftless compound Poisson process, it stays at zero for a positive amount of time and hence (i) fails to hold. Lemma \ref{Rem201A} (i) and (ii) are implied by the fact that for the case $X$ is of unbounded variation or has a negative drift, the process goes below zero immediately after it hits zero; otherwise it is impossible for the process to jump onto zero. Note that these behaviors are not guaranteed for driftless compound Poisson processes.

Above discussions are for the process $X$ and not for the control process $R^\pi$. This paper focuses on the case  $R^\pi$ is monotone. However, for the case the monotonicity assumption is relaxed, the path variation of  $R^\pi$ must be carefully assumed. See, for example, \cite{BY} in the spectrally negative \lev model for the case $R^\pi$ is of bounded variation without the monotonicity assumption.
\end{remark}
\begin{remark}\label{Rem201}
By Assumption \ref{Ass101}(ii) and the polynomial growth assumption as in Assumption \ref{Ass201}(ii), we have 
$\bE_x\left[\int_0^\infty e^{-qt} \absol{f(X_t)} \diff t\right]<\infty$ for all $x\in\bR$, and the map $x\mapsto \bE_x\left[\int_0^\infty e^{-qt} \absol{f(X_t)} \diff t\right]$ is {of} polynomial growth as $x \uparrow \infty$ or $x \downarrow -\infty$. 
For its proof, see the proof of \cite[Lemma 11]{Yam2017}. 
\end{remark}

Assumption \ref{Ass101}{(ii)} is also needed in the verification step when taking limits (see Theorem \ref{Thm201}).

\section{Barrier {strategies}} \label{section_barrier}

As is commonly pursued in classical singular control problems,  our objective is to show the optimality of a barrier {strategy}
$\pi^b$ for some $b\in\bR$ with
\begin{align}
R^b_t:=R^{\pi^b}_t=-\inf_{s\in[0,t]}\cbra{(X_s-b)\land0},\quad t\geq 0. 
\end{align}
It is well known that the resulting controlled process
\[
U^b_t:=U^{\pi^b}_t = X_t + R^b_t, \quad t\geq 0,
\]
 becomes the reflected L\'evy process with a lower barrier $b$. For the rest of the paper, let
\begin{align} \label{def_v_b}
v_b (x):= v_b^{(1)}(x)  + C v_b^{(2)}(x), 
\quad x\in\bR, 
\end{align}
where we define
\begin{align}
v_b^{(1)}(x) := \bE_x\left[ \int_0^\infty e^{-qt} f(U^b_t)\diff t   \right], \quad
v_b^{(2)}(x) := \bE_x\left[ \int_{[0,\infty)} e^{-qt}  \diff R^b_t  \right],
\end{align}
{whose finiteness is verified in Lemma \ref{Thm201}.}
Note by the strong Markov property that
\begin{align}
v_b (x):= C (b-x) + v_b(b), \quad x \leq b. \label{v_b_below}
\end{align}

The proofs of the following lemmas are deferred to Appendix \ref{Sec0A1},  \ref{AppA03}, and \ref{Sec0B}. 

\begin{lemma}\label{Thm201}
For $b\in\bR$, the {strategy} $\pi^b$ satisfies {the integrability condition} \eqref{2} and is hence admissible.  
\end{lemma}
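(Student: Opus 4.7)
The plan is to reduce both integrands in \eqref{2} to polynomial moments of $\sup_{s\in[0,t]}|X_s|$ and then invoke Assumption \ref{Ass101}(ii). Under $\bP_x$, the reflection identity $R^b_t=\bigl(b-\inf_{s\in[0,t]}X_s\bigr)_+$ yields
\[
R^b_t \leq |b|+|x|+\sup_{s\in[0,t]}|X_s-x|,\qquad |U^b_t|\leq |X_t|+R^b_t,
\]
so, by the polynomial-growth bound $|f(y)|\leq k_1+k_2|y|^N$ from Assumption \ref{Ass201}(ii), the running-cost integrand $e^{-qt}|f(U^b_t)|$ is dominated pointwise by $e^{-qt}$ times a polynomial of degree $N$ in $\sup_{s\in[0,t]}|X_s-x|$.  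For the controlling-cost term, applying Fubini to $e^{-qt}=\int_t^\infty qe^{-qs}\diff s$ gives the identity
\[
\int_{[0,\infty)} e^{-qt}\diff R^b_t \;=\; q\int_0^\infty e^{-qt} R^b_t\diff t,
\]
which is similarly dominated, up to constants, by $e^{-qt}\sup_{s\in[0,t]}|X_s-x|$.

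It therefore suffices to show, for every integer $p\geq 1$, that the map $t\mapsto \bE[\sup_{s\in[0,t]}|X_s|^p]$ grows at most polynomially in $t$.  The exponential-moment hypothesis in Assumption \ref{Ass101}(ii) forces $\bE[|X_1|^p]<\infty$ for every $p\in\bN$, so $\mu:=\bE[X_1]$ is finite and $X_t-\mu t$ is a mean-zero L\'evy martingale whose $p$-th moment grows polynomially in $t$ (by Burkholder--Davis--Gundy applied to its quadratic variation, for instance).  Doob's $L^p$ maximal inequality then yields a polynomial bound on the $p$-th moment of its supremum, and reinstating the deterministic drift gives the same bound for $\bE[\sup_{s\in[0,t]}|X_s|^p]$.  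Multiplying by $e^{-qt}$ and applying Tonelli, both expectations in \eqref{2} are finite, so $\pi^b\in\cA$.

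The main obstacle is the polynomial moment estimate for $\sup_{s\in[0,t]}|X_s|$; once it is in hand, the rest is routine bookkeeping.  It may well be cleaner to simply cite a standard L\'evy-process moment estimate (e.g.\ from \cite{Kyp2014}) rather than re-derive it via BDG and Doob, and to treat the two integrability statements in \eqref{2} in parallel using the single pointwise bound on $\sup_{s\in[0,t]}|X_s-x|$.
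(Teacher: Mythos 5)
Your proposal is correct in substance, but it takes a genuinely different route from the paper. You reduce both integrands in \eqref{2} to polynomial moments of $\sup_{s\in[0,t]}|X_s-x|$ via the pointwise bounds $R^b_t=(b-\inf_{s\le t}X_s)_+\le |b|+|x|+\sup_{s\le t}|X_s-x|$ and $|U^b_t|\le|X_t|+R^b_t$, then push everything through a moment estimate for the running supremum of a L\'evy process. This treats the running-cost and controlling-cost integrals uniformly in one step. The paper's Appendix \ref{Sec0A1} instead handles the two parts separately and never estimates the supremum directly: for the running cost it decomposes the reflected process $U^0$ over its excursions from $0$ via the stopping times $T^{(n)}_\pm$, sums a geometric series using the strong Markov property, and controls the contribution of large overshoots at $T^{(1)}_+$ with the compensation formula for the Poisson random measure, relying on Remark \ref{Rem201} (i.e.\ the already-established finiteness and polynomial growth of $x\mapsto\bE_x[\int_0^\infty e^{-qt}|f(X_t)|\,\diff t]$); for the controlling cost it slices time into fixed intervals $[(k-1)u,ku)$ and again sums a geometric series, quoting \cite[Lemma 3.2]{Nob2019}. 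Your approach buys a shorter and more uniform argument and avoids the compensation-formula bookkeeping; the paper's buys not having to prove a supremum moment bound and instead reuses the Markov/excursion machinery that is already set up elsewhere. One caveat: the assertion that $\bE[\sup_{s\le t}|X_s|^p]$ grows polynomially in $t$ is true under Assumption \ref{Ass101}(ii), but your sketch (center $X$, apply BDG to the quadratic variation, then Doob) is slightly circular as stated, since bounding $\bE[[M]_t^{p/2}]$ is itself a L\'evy moment estimate of the same kind; it is cleaner to go via the cumulant expansion (all cumulants of $X_t$ are linear in $t$, hence $\bE[|X_t|^p]$ is polynomial in $t$) and then Doob, or simply to cite a standard reference such as Sato. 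That detail needs to be made precise before the proposal is fully rigorous.
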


The function $v_b(x)$ grows linearly {as $x \downarrow -\infty$} {as in \eqref{v_b_below}}. In addition, we have the following.
\begin{lemma}\label{Rem202}
The function $v_b(x)$ grows at most polynomially as $x\uparrow\infty$. 
\end{lemma}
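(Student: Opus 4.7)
The plan is to treat $v^{(1)}_b$ and $v^{(2)}_b$ separately, exploiting the fact that once the starting point $x$ sits well above the barrier $b$, the reflection contributes only a modest, $x$-independent amount. Fix $x\geq b$ and, under $\bP_x$, write $X_t=x+\tilde X_t$ so that $\tilde X$ is a \lev process starting from $0$ with the law of $X$ under $\bP$; set $\underline{\tilde X}_t:=\inf_{s\leq t}\tilde X_s$. Directly from the definitions,
\begin{align*}
R^b_t=(b-x-\underline{\tilde X}_t)^+\leq -\underline{\tilde X}_t,
\end{align*}
where the inequality uses $b-x\leq 0$ and $\underline{\tilde X}_t\leq 0$, and is therefore uniform in $x\geq b$. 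Consequently $b\leq U^b_t\leq x+|\tilde X_t|-\underline{\tilde X}_t$, and in particular $|U^b_t|\leq |b|+x+2\sup_{s\leq t}|\tilde X_s|$.

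For $v^{(2)}_b$, a Fubini argument rewrites $v^{(2)}_b(x)=q\int_0^\infty e^{-qt}\bE_x[R^b_t]\,\diff t$, and the pathwise bound above gives $\bE_x[R^b_t]\leq\bE_b[R^b_t]$ for every $t$. Hence $v^{(2)}_b(x)\leq v^{(2)}_b(b)<\infty$ for all $x\geq b$, the finiteness coming from Lemma \ref{Thm201}. In other words, $v^{(2)}_b$ is bounded on $[b,\infty)$, which is stronger than polynomial growth.

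For $v^{(1)}_b$, Assumption \ref{Ass201}(ii) together with the elementary inequality $(a_1+a_2+a_3)^N\leq 3^{N-1}(a_1^N+a_2^N+a_3^N)$ yields
\begin{align*}
|f(U^b_t)|\leq k_1+3^{N-1}k_2\bigl(|b|^N+x^N+2^N\sup_{s\leq t}|\tilde X_s|^N\bigr).
\end{align*}
Taking expectation and integrating against $e^{-qt}\,\diff t$, polynomial-in-$x$ growth of $v^{(1)}_b$ reduces to the finiteness of $A_N:=\int_0^\infty e^{-qt}\bE\bigl[\sup_{s\leq t}|\tilde X_s|^N\bigr]\diff t$.

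This last finiteness is the main obstacle. Assumption \ref{Ass101}(ii) supplies exponential moments $\bE[e^{\bar\theta|\tilde X_1|}]<\infty$, and applying $y^N\leq (N!/\theta^N)e^{\theta y}$ to $y=|\tilde X_t|$ yields $\bE[|\tilde X_t|^N]\leq C(N,\theta)(e^{t\kappa(\theta)}+e^{t\kappa(-\theta)})$ for $\theta\in(0,\bar\theta)$, where $\kappa$ denotes the Laplace exponent of $\tilde X$. Choosing $\theta>0$ small makes both $\kappa(\pm\theta)$ strictly smaller than $q$; a standard maximal inequality for \lev processes with exponential moments then passes this exponential-in-$t$ bound to $\sup_{s\leq t}|\tilde X_s|$, after which the $e^{-qt}$ factor dominates and $A_N<\infty$. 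This is essentially the computation carried out in the proof of \cite[Lemma 11]{Yam2017} cited in Remark \ref{Rem201}. Combining the boundedness of $v^{(2)}_b$ with the polynomial bound on $v^{(1)}_b$ then yields the claim.
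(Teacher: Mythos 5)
Your proof is correct and takes a genuinely different route from the paper's. Both start by writing $X_t=x+\tilde X_t$ so that the $x$-dependence factors out, and both observe that $R^b$ (and hence $v^{(2)}_b$) is bounded uniformly in $x\geq b$. The divergence is in handling $v^{(1)}_b$: the paper uses the monotonicity of the reflected process in the barrier ($U^{b-x}_t\leq U^b_t$) together with the fact that $|f|$ is eventually monotone, so that after a binomial expansion the problem reduces to the finiteness of $\bE\bigl[\int_0^\infty e^{-qt}|U^b_t|^{m}\diff t\bigr]$ under $\bP_0$ for fixed $b$; this is then read off from the already-established Lemma \ref{Thm201} applied to the polynomial running cost $y\mapsto|y|^{m}$. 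You instead bound $|U^b_t|\leq|b|+x+2\sup_{s\leq t}|\tilde X_s|$ and reduce to the finiteness of $A_N=\int_0^\infty e^{-qt}\bE[\sup_{s\leq t}|\tilde X_s|^N]\diff t$, which you then attack head-on with exponential moments and a maximal inequality. Your route is more self-contained and elementary in spirit, while the paper's is shorter precisely because it recycles the admissibility lemma. One caution: the phrase ``a standard maximal inequality for \lev processes with exponential moments then passes this exponential-in-$t$ bound to the supremum'' is doing real work and deserves to be spelled out. The natural way to justify it is to apply Doob's $L^p$ maximal inequality ($p>1$) to the exponential martingales $s\mapsto e^{\pm\theta\tilde X_s-s\kappa(\pm\theta)}$, which forces you to keep $p\theta<\bar\theta$ and to track the factor $e^{s\kappa(\pm\theta)}$ before taking suprema; only then does a choice of small $\theta$ give an exponent strictly below $q$. (A plain $L^1$ maximal inequality is not enough, since it yields only a tail bound, and one also needs to handle separately the side on which $e^{\mp\theta\tilde X_s}$ is a supermartingale.) Similarly, the integration-by-parts identity $v^{(2)}_b(x)=q\int_0^\infty e^{-qt}\bE_x[R^b_t]\diff t$ implicitly uses $e^{-qT}\bE_x[R^b_T]\to 0$, which again rests on such moment estimates. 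These are fillable but not free.
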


\begin{lemma}\label{Lem201}
For {$x, b \in\bR$,} we have 
$\bE_x \left[ \int_0^\infty e^{-qt}\absol{f^\prime_+(U^{b}_t)}\diff t\right]< \infty$. 
\end{lemma}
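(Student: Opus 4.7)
The plan is to derive a pathwise polynomial bound on $|f'_+(U^b_t)|$ in terms of $\sup_{s\le t}|X_s|$, and then to control the resulting expectation by means of the exponential moment assumption on $X$.

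First I would observe that convexity of $f$, combined with its polynomial growth $|f(y)|\le k_1+k_2|y|^N$ (Assumption \ref{Ass201}(i)--(ii)), yields polynomial growth of $f'_+$: from the one-sided slope estimates $f(y)-f(y-1)\le f'_+(y)\le f(y+1)-f(y)$, one extracts constants $K_1,K_2>0$ with $|f'_+(y)| \le K_1 + K_2|y|^N$ for all $y\in\mathbb{R}$. Next, since $R^b_t = (b - \inf_{s\in[0,t]} X_s)_+$ and $U^b_t = X_t + R^b_t \ge b$, a brief case analysis on the sign of $U^b_t$ yields the pathwise estimate
\[
|U^b_t| \le |b| + 2\sup_{s\in[0,t]}|X_s|,
\]
and hence $|f'_+(U^b_t)| \le \tilde K_1 + \tilde K_2 \bigl(\sup_{s\le t}|X_s|\bigr)^N$ for constants depending on $b$.

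Finally, Assumption \ref{Ass101}(ii) guarantees $\bE[e^{\bar\theta|X_1|}]<\infty$, which by the independent-increments structure of $X$ implies that $\bE_x[|X_t|^N]$ is at most polynomial in $t$ and $|x|$; Doob's $L^p$ maximal inequality applied to the centered martingale $X_t - \bE_x[X_t]$ then delivers a polynomial bound $\bE_x\bigl[(\sup_{s\le t}|X_s|)^N\bigr] \le P_N(t,|x|)$. Since $q>0$,
\[
\int_0^\infty e^{-qt} \bE_x\bigl[|f'_+(U^b_t)|\bigr]\diff t \le \int_0^\infty e^{-qt}\bigl(\tilde K_1 + \tilde K_2\, P_N(t,|x|)\bigr)\diff t < \infty,
\]
as required. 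The step I anticipate as the most delicate is this last moment control of the running supremum: although it is a routine consequence of exponential integrability, care must be taken to obtain a polynomial-in-$t$ (rather than exponential-in-$t$) rate so that the discount factor $e^{-qt}$ absorbs it. The analogous point for $\bE_x[|f(X_t)|]$ is already handled in Remark \ref{Rem201} via the proof of \cite[Lemma 11]{Yam2017}, and the argument adapts to the reflected process via the pathwise bound above.
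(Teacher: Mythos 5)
Your proof is correct, but it finishes differently from the paper. Both arguments open with the same convexity slope estimate $f(y)-f(y-\varepsilon)\leq \varepsilon f'_+(y)\leq f(y+\varepsilon)-f(y)$; the paper, however, immediately takes absolute values to get the pointwise bound $|f'_+(U^b_t)|\leq \varepsilon^{-1}\bigl(|f(U^b_t+\varepsilon)|+2|f(U^b_t)|+|f(U^b_t-\varepsilon)|\bigr)$ and then invokes the already-established finiteness of $\bE_x\bigl[\int_0^\infty e^{-qt}|f(U^b_t)|\,\diff t\bigr]$ from Lemma~\ref{Thm201} (together with its spatially shifted versions), closing the proof in two lines. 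You instead first extract from the slope estimate a polynomial bound $|f'_+(y)|\leq K_1+K_2|y|^N$, then derive the pathwise bound $|U^b_t|\leq |b|+2\sup_{s\leq t}|X_s|$, and finally control the resulting moments of $\sup_{s\leq t}|X_s|$ via exponential integrability and Doob's $L^p$ inequality. The net effect is that the paper leverages the (fairly involved) excursion decomposition that was already done to prove Lemma~\ref{Thm201}, whereas your argument is self-contained and essentially adapts the moment estimate behind Remark~\ref{Rem201} from $X_t$ to $U^b_t$. Both are valid; your route sidesteps Lemma~\ref{Thm201} at the cost of redoing the moment control. One small refinement worth noting: if $N=1$ the Doob $L^p$ inequality doesn't apply directly, but one can always harmlessly pass to $N'=2$ since the polynomial-growth hypothesis only gives an upper bound.
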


{As discussed in the introduction, the} objective of this current paper is to show that {a barrier strategy with the barrier }
\begin{align}
\begin{split}
b^\ast := \inf\left\{b\in\bR: {\rho(b)} +C \geq 0\right\}  
\end{split}
\label{def_b_star}
\end{align}
where
\begin{align} \label{def_rho}
\rho(b) := \bE_b \left[\int_0^\infty e^{-qt} f^\prime_+ (U^{b}_t)\diff t\right]=
\bE \left[\int_0^\infty e^{-qt} f^\prime_+(U^{0}_t+b)\diff t\right]
\end{align}
is optimal.
{To see that this is well-defined and finite, by} the convexity of $f$, the mapping $\rho(b)$ 
 is {nondecreasing}. 
In addition, {by Lemma \ref{Lem201},} monotone convergence gives $\lim_{b\uparrow\infty} {\rho(b)}
= { f^\prime_+(\infty)}/q $ and 
$\lim_{b\downarrow-\infty} {\rho(b)}
= f^\prime_+(-\infty) / q$. 
Hence, by Assumption \ref{Ass201}(iii), we have $-\infty <  b^\ast<\infty$. 

\begin{example} \label{example_periodic} 
For the case $f(x)  = x^2$ {({and hence } $f'(x) = 2x$)}, because
$\rho(b) = 2 ( \bE \left[\int_0^\infty e^{-qt} U^{0}_t\diff t\right] + b/q )$, we have
$b^* = {-q \big( \frac C 2 + \bE \left[\int_0^\infty e^{-qt} U^{0}_t\diff t\right] \big)}$.
\end{example}

\begin{remark} \label{remark_rho_different_exp}
By the duality of L\'evy processes, for each $t \geq 0$, the random variable $U_t^0 = X_t - \inf_{s \in [0,t]} X_s$ is the same in distribution with $\sup_{s \in [0,t]} X_s$ (as in Lemma 3.5 of \cite{Kyp2014}). Hence, we can write
\[
\rho(b) = \mathbb{E} \Big[ \int_0^\infty e^{-qt} f_+'(U_t^0 + b)\Big] = q^{-1}\mathbb{E} \Big[  f_+'(U_{\mathbf{e}_q}^0  + b)\Big] = q^{-1}\mathbb{E} \Big[  f_+'(\sup_{s \in [0,\mathbf{e}_q]} X_s + b)\Big], 
\]
where $\mathbf{e}_q$ is an independent exponential random variable with parameter $q$.
Therefore, if the Wiener-Hopf factorization is known, the Wiener-Hopf factor (i.e.\ the Laplace transform of $\sup_{s \in [0,\mathbf{e}_q]}X_s$) can be inverted to obtain the distribution of  $\sup_{s \in [0,\mathbf{e}_q]} X_s$. In particular, if $f'$ is a polynomial, more direct approach is possible by computing the moments of $\sup_{s \in [0,\mathbf{e}_q]} X_s$. There are only  few cases where the Wiener-Hopf factorization is explicitly known, but for example when the L\'evy measure is phase-type \cite{Asmussen} or meromorphic \cite{KKP}, the Wiener-Hopf factorization is analytical known and can be written as  a rational function, which can be inverted by partial fraction decomposition. 
\end{remark}

\begin{remark}  \label{remark_rho_variants}
One important fact of the characterization of $b^*$ as a root of \eqref{def_rho} is that in variations of the problem with restricted sets of admissible strategies, the optimal barrier is characterized in the same way by replacing the (classical) reflected process $U^{b}_t$ with variants of reflected processes.
 In \cite{Hernandez} where they consider the case the control process is assumed to be absolutely continuous, the optimal barrier is expressed in the same way with $U^{b}_t$ replaced by the so-called refracted process.  In \cite{Perez_Yamazaki_Bensoussan} with Poissonian control opportunities, the same holds with $U^{b}_t$ replaced by the so-called Parisian-reflected processes.
 
\end{remark}

The following lemma implies that 
\begin{enumerate}
\item[(a)] when $X$ is of unbounded variation,  $b^*$ is the unique root of $\rho(\cdot) + C = 0$;
\item[(b)] when $X$ is not the negative of a subordinator, then $\rho(b) +C < 0$ for $b < b^*$ and $\rho(b) +C> 0$ for $b > b^*$.
\end{enumerate}
\begin{lemma}\label{Lem402} 
\begin{enumerate}
\item If  $X$ has unbounded variation paths, then the function $\rho$ is continuous.
\item 
{If  $X$ is not the negative of a subordinator,} we have $\rho(b) +C > 0$ for all $b > b^*$.
\end{enumerate}
\end{lemma}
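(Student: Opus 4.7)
The plan is to use the reformulation in Remark~\ref{remark_rho_different_exp},
\[
\rho(b) = q^{-1}\,\bE\!\left[f^\prime_+(S+b)\right], \qquad S:=\sup_{s\in[0,\mathbf{e}_q]}X_s,
\]
and invoke the classical fact that for a \lev process of unbounded variation, $S$ has an atom-free distribution. Since $f^\prime_+$ is nondecreasing, its set $D$ of discontinuities is at most countable, and atom-freeness of $S$ yields $\bP(S+b\in D)=0$ for every $b$. Thus for any sequence $b_n\to b$, $f^\prime_+(S+b_n)\to f^\prime_+(S+b)$ almost surely, and dominated convergence, with an integrable envelope obtained from Lemma~\ref{Lem201} applied at $b\pm1$, gives $\rho(b_n)\to\rho(b)$.

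\textbf{Part (ii).} The definition of $b^\ast$ together with monotonicity of $\rho$ already gives $\rho(b)+C\geq0$ for every $b>b^\ast$; the task is to upgrade to strict inequality. I would argue by contradiction: suppose $\rho(b_0)+C=0$ for some $b_0>b^\ast$. Monotonicity of $\rho$ forces $\rho\equiv-C$ on $(b^\ast,b_0]$, and right-continuity of $\rho$ (by monotone convergence, since $f^\prime_+$ is right-continuous) pushes this down to $b^\ast$, so $\rho(b^\ast)=-C$. Then
\[
0 \;=\; q\bigl(\rho(b_0)-\rho(b^\ast)\bigr) \;=\; \bE\!\left[f^\prime_+(S+b_0)-f^\prime_+(S+b^\ast)\right],
\]
and nonnegativity of the integrand (by monotonicity of $f^\prime_+$) implies $f^\prime_+(S+b_0)=f^\prime_+(S+b^\ast)$ almost surely, i.e., $f^\prime_+$ is constant on the random interval $[S+b^\ast,S+b_0]$ almost surely.

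Next I would use that $X$ is not the negative of a subordinator: for every $a>0$, with $\tau_a^+:=\inf\{t>0:X_t>a\}$, one has $\bP(S>a)\ge\bE[e^{-q\tau_a^+}]>0$, so the support of $S$ is unbounded above. The set $E:=\{s\ge0:f^\prime_+\text{ is constant on }[s+b^\ast,s+b_0]\}$ is a disjoint union of closed intervals, one per maximal flat piece of $f^\prime_+$ of length at least $b_0-b^\ast$. Combining $\bP(S\in E)=1$ with the unbounded support of $S$ and the structure of $E$, the goal is to conclude that $f^\prime_+$ is constant on an unbounded half-line $[b^\ast,\infty)$. The constant value would then have to equal $q\rho(b^\ast)=-Cq$, contradicting $f^\prime_+(\infty)>-Cq$ from Assumption~\ref{Ass201}(iii).

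\textbf{Main obstacle.} I expect the delicate step to be the last one: passing from ``$f^\prime_+$ is constant on the random interval $[S+b^\ast,S+b_0]$ almost surely'' to ``$f^\prime_+$ is constant on $[b^\ast,\infty)$''. A priori $E$ could be a countable union of disjoint \emph{bounded} closed intervals with right endpoints tending to $+\infty$, which is compatible with $\operatorname{supp}(S)$ being unbounded above without forcing any single component of $E$ to be unbounded. Closing this gap requires additional regularity of the law of $S$---typically, continuity of its distribution function on $(0,\infty)$ together with positivity of a density there, so that the ``gaps'' between components of $E$ cannot carry mass---which should follow from Assumption~\ref{Ass101}(i) together with $X$ not being the negative of a subordinator, but the argument will need care.
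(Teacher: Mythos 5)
For part (i), your route is the same as the paper's in substance: by duality $\rho(b)=q^{-1}\bE[f'_+(S+b)]$ with $S:=\sup_{s\le\mathbf{e}_q}X_s\stackrel{d}{=}U^0_{\mathbf{e}_q}$, and continuity follows once one knows the law of $S$ has no atoms (since $f'_+$ has at most countably many discontinuities). The paper does not invoke this as a ready-made fact but proves it: the atom at $0$ vanishes by \cite[Theorem~6.7]{Kyp2014} because $X$ is of unbounded variation, and the absence of atoms on $(0,\infty)$ is shown by decomposing $U^0$ into excursions from $0$ and applying \cite[Proposition~I.15]{Ber1996} to the underlying potential of $X$. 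Your ``classical fact'' is indeed obtainable (e.g.\ via regularity of $0$ for $(0,\infty)$ for the ascending ladder height and \cite[Theorem~5.4]{Kyp2014}), but as written this is a step you would still need to establish or cite precisely; aside from that, part (i) is fine.

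For part (ii) you take a genuinely different route, and the gap you flag at the end is a real one that your proposed fix does not close. You reduce the problem to showing that ``$f'_+$ is constant on $[S+b^*,S+b_0]$ a.s.'' implies ``$f'_+$ is constant on $[b^*,\infty)$,'' and you correctly note that this would follow if the law of $S$ had connected, unbounded support. However, the hypothesis of part (ii) is only that $X$ is not the negative of a subordinator, and Assumption~\ref{Ass101}(i) is not available in full generality (Lemma~\ref{Lem402}(ii) is reused in Section~\ref{sec_compound_poisson_case} for driftless compound Poisson $X$). For a driftless compound Poisson $X$ with lattice-valued jumps, $S$ is supported on a lattice, and the set $E$ of admissible $s$ (a disjoint union of intervals separated by gaps of length $>b_0-b^*$) can easily contain the whole lattice without $f'_+$ being constant on any half-line; so no density argument is available. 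The paper avoids this issue altogether with a case split on $\bar b:=\inf\{b\in\bR: f'_+\text{ constant on }(b,\infty)\}$: for $b\ge \bar b$ one has the exact formula $\rho(b)=f'_+(\infty)/q$, which satisfies $\rho(b)+C>0$ directly by Assumption~\ref{Ass201}(iii), while for $b^*<b<\bar b$ the paper appeals to strict monotonicity of $\rho$ (from $U^0$ exceeding every level). In short, the paper handles the ``flat tail'' regime by a closed-form evaluation of $\rho$ rather than by trying to rule it out, and this is exactly what makes your contradiction argument hard to finish; you would be better off adopting the $\bar b$ dichotomy.
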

\begin{proof}
{(i) Because the function $\rho$ is right-continuous from its definition {as in \eqref{def_rho}}, } it is sufficient to prove that $\rho$ is left-continuous. 
By the dominated convergence theorem and since the set of discontinuous points of $f^\prime_+$ is at most {countably many} {(from convexity {as in Assumption \ref{Ass201}(i)})}, for $b\in\bR$ and $\varepsilon>0$, we have 
\begin{align}
\lim_{\varepsilon\downarrow0}\left(\rho(b)-\rho(b-\varepsilon)\right)
&=\lim_{\varepsilon\downarrow0}\bE \left[\int_0^\infty e^{-qt} \left(f^\prime_+(U^{0}_t+b)-f^\prime_+(U^{0}_t+b-\varepsilon)\right)\diff t\right]\\
&=\bE \left[\int_0^\infty e^{-qt} \left(f^\prime_+(U^{0}_t+b)-f^\prime_-(U^{0}_t+b)\right)\diff t\right]\\
&= \sum_{y\in [0, \infty)}\left( f^\prime_+(y +b)-f^\prime_- (y +b)\right)R^{(q)}_{U^0} (\{y \} ), \label{b013}
\end{align}
where $R^{(q)}_{U^0}$ is the potential measure given by 
$R^{(q)}_{U^0} (\diff y ):=\bE\left[\int_0^\infty e^{-qt} 1_{\{U^0_t\in\diff y\}} \diff t\right]$, $y \geq 0$.
We prove that the measure $R^{(q)}_{U^0}$ does not have a mass {and hence} \eqref{b013} is equal to $0$. 
First, we have 
$R^{(q)}_{U^0}(\{0\})
=\bE\left[\int_0^\infty e^{-qt} 1_{\{X_t= \inf_{0\leq s\leq t} X_s \} }   \diff t\right]=0$,
where in the last equality, we used \cite[Theorem 6.7]{Kyp2014} {and our assumption that $X$ is of unbounded variation}. 
Second, we prove that for $y>0$, $R^{(q)}_{U^0}(\{y\})=0$. 
{To see this, we} {let $\un{T}^{(0)}=0$ and inductively define}  
$\bar{T}^{(n)}=\inf\{t>\un{T}^{(n-1)} : U^0_t>\frac{y}{2}\}$ and $\un{T}^{(n)}=\inf\{t>\bar{T}^{(n)} : U^0_t=0\}$, $n \geq 1$.
Then, {by the strong Markov property,}
\begin{align*}
R^{(q)}_{U^0}(\{y\})
=\sum_{n\in\bN} \bE \left[ \int_{{\bar{T}^{(n)}}}^{{\un{T}^{(n)}}}  
e^{-qt}1_{\{U^0_t=y\}} \diff y \right]
=\sum_{n\in\bN} \bE \left[ e^{-q \bar{T}^{(n)}}\bE_{U^0_{\bar{T}^{(n)}}}\left[ \int_{0}^{\tau^-_0}e^{-qt}1_{\{X_t=y\}} \diff y \right]\right]=0,
\end{align*}
{where the last equality holds because} the potential of $X$ does not have a mass by \cite[Proposition I.15]{Ber1996}.
{Substituting this in \eqref{b013}, we see that $\rho$ is left-continuous, as desired.}
\par
(ii) 
By the assumption that $X$ is not the negative of a subordinator, {its reflected process} $U^0$ can exceed any level almost surely,
and thus the function $\rho$ is strictly increasing on $(-\infty , \bar{b})$ where 
\[
\bar{b}:=\inf \{ b\in\bR: f^\prime_+ \text{ is constant on }(b, \infty)\}.
\]
When $\bar{b} = \infty$ the proof is complete and hence we assume below that $\bar{b}<\infty$.

We have $f^\prime_+(x)=f^\prime_+(\infty) := \lim_{y \to \infty}f^\prime_+(y)   <\infty$ for $x\geq {\bar{b}}$. 
Thus, we have for $b\geq \bar{b}$, 
\begin{align}
\rho(b)+C=\int_0^\infty e^{-qt}f^\prime_+(\infty)\diff t+C=\frac{f^\prime_+(\infty)}{q}+C>0,\label{b014}
\end{align}
where in the last inequality, we used Assumption \ref{Ass201}(iii).
If $b^\ast\geq \bar{b}$, then the above inequality holds for $b = b^*$ and hence $\rho(b^\ast)+C
>0$ (hence $\rho(b)+ C > 0$ for all $b > b^*$ by the monotonicity of $\rho$).
 If $b^\ast < \bar{b}$, this means $\rho(\cdot)$ is strictly increasing at $b^\ast$), and hence $\rho(b)+C > 0$ for all $b > b^*$.
\end{proof}

\subsection{Optimality over barrier strategies}

We shall first show the optimality of the barrier strategy $\pi^{b^*}$ over all barrier strategies 
$\mathcal{A}_{bar} {:= \{ (R^b_t)_{t \geq 0}; b \in \R\}}$,
which is a subset of $\cA$.
\begin{theorem} \label{Thm202}
{The strategy $\pi^{b^*}$ is optimal over $\mathcal{A}_{bar}$. In other words,}
$v_{b^\ast } (x) \leq v_b(x)$ for $x, b\in\bR$.
\end{theorem}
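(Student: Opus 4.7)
My plan is to fix $x \in \mathbb{R}$ and show that $b \mapsto v_b(x)$ attains its minimum at $b = b^*$, treating separately the two regimes $x \leq b$ and $x \geq b$ that appear in \eqref{v_b_below}.

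For $x \leq b$, equation \eqref{v_b_below} reduces the optimization to minimizing
\[
h(b) := v_b(b) + C b
\]
over $b \geq x$. Using translation invariance, I would rewrite
\[
v_b(b) = \mathbb{E}\!\left[\int_0^\infty e^{-qt} f(U^0_t + b)\,\diff t\right] + C\, \mathbb{E}\!\left[\int_{[0,\infty)} e^{-qt}\, \diff R^0_t\right],
\]
whose second term is free of $b$, and differentiate under the integral (justified via convexity of $f$ together with the integrability supplied by Lemma \ref{Lem201}) to obtain $h'(b) = \rho(b) + C$. The monotonicity of $\rho$ combined with the definition \eqref{def_b_star} then gives $h(b^*) \leq h(b)$ for every $b$, which yields $v_{b^*}(x) \leq v_b(x)$ in this regime.

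For $x > b$, I would apply the strong Markov property at $\tau^-_b$ (using $U^b_{\tau^-_b} = b$) together with translation invariance to derive
\begin{align*}
v_b(x) = \mathbb{E}_{x-b}\!\left[\int_0^{\tau^-_0} e^{-qt} f(X_t + b)\,\diff t\right] + C\, \mathbb{E}_{x-b}\!\left[e^{-q\tau^-_0}(-X_{\tau^-_0})\right] + \mathbb{E}_{x-b}\!\left[e^{-q\tau^-_0}\right] v_b(b).
\end{align*}
Differentiating this in $b$ with $x$ fixed should produce, after cancellations among derivatives in both the spatial and the barrier arguments, an expression of the form $\frac{\partial}{\partial b} v_b(x) = (\rho(b) + C) \cdot \mathbb{E}_{x-b}[e^{-q\tau^-_0}]$ (or at least one whose sign matches that of $\rho(b) + C$). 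Combined with Lemma \ref{Lem402}(ii) and the first regime, this shows that $b \mapsto v_b(x)$ is nonincreasing on $(-\infty, b^*)$ and nondecreasing on $(b^*, \infty)$, completing the proof.

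The main obstacle will be the derivative computation in the second regime: the terms arising from differentiating each of the three summands in the displayed identity above must combine into the clean form claimed, which likely requires a potential-theoretic identity for $X$ killed at $\tau^-_0$ in the spirit of \cite{Nob2019}. A viable backup, should the direct calculation prove unwieldy, is a pathwise coupling of the reflected processes $U^{b_1}$ and $U^{b_2}$ for $b_1 < b_2 \leq x$ on a common probability space: applying the strong Markov property at $\tau^-_{b_2}$ reduces the comparison of $v_{b_1}(x)$ and $v_{b_2}(x)$ to a comparison of $h(b_1)$ and $h(b_2)$ plus a residual term that can be controlled using the integrability of $f'_+$ furnished by Lemma \ref{Lem201}.
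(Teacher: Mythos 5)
Your overall plan is in the same spirit as the paper's proof: both aim to show that the (right-hand) $b$-derivative of $b\mapsto v_b(x)$ equals $\bE_x[e^{-q\tau^-_b}]\,(\rho(b)+C)$ and then exploit the sign change of $\rho(\cdot)+C$ at $b^*$. Your split into the regimes $b\geq x$ and $b<x$ is a genuinely different organization, and the first half is a nice shortcut: for $b\geq x$, the affine formula \eqref{v_b_below} plus spatial homogeneity reduces the problem to the one-variable function $h(b):=v_b(b)+Cb$, whose controlling-cost part is a constant and whose right-hand derivative is $\rho(b)+C$. (One small fix: when $b^*<x$, the statement ``$h(b^*)\leq h(b)$ for every $b$ yields $v_{b^*}(x)\leq v_b(x)$ in this regime'' does not follow, since $v_{b^*}(x)$ is not $-Cx+h(b^*)$ in that case; what this regime gives is only the monotonicity of $b\mapsto v_b(x)$ on $[x,\infty)$, which must then be spliced with the second regime at $b=x$.)

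The second regime is where the real work is, and your proposal does not carry it out. As you yourself anticipate, differentiating the strong Markov decomposition term by term is awkward: $b$ enters through the law $\bP_{x-b}$, through the shift inside $f$, and through $v_b(b)$, and obtaining the clean form $(\rho(b)+C)\bE_{x-b}[e^{-q\tau^-_0}]$ requires, in effect, also controlling a derivative with respect to the starting point (which the paper treats separately in Lemma \ref{Lem203}). What the paper actually does, in Lemma \ref{Lem202}, is the pathwise coupling you propose as a backup, but in a sharper form: fix the driving path $X$, compare $U^{b+\varepsilon}_t$ with $U^b_t$ directly, and split time into the windows $[0,\tau^-_{b+\varepsilon})$, $[\tau^-_{b+\varepsilon},\tau^-_b)$, $[\tau^-_b,\infty)$, on which the two controlled paths coincide, differ by at most $\varepsilon$, and differ by exactly $\varepsilon$, respectively. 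Letting $\varepsilon\downarrow 0$ (via Lemma \ref{Rem201A}(ii) for the continuity of $\tau^-_{\cdot}$ and Lemma \ref{Lem201} for dominated convergence) gives
\begin{align}
\lim_{\varepsilon\downarrow0}\frac{v_{b+\varepsilon}(x)-v_b(x)}{\varepsilon}
=\bE_x\!\left[\int_{\tau^-_b}^{\infty}e^{-qt}f'_+(U^b_t)\,\diff t\right]+C\,\bE_x\!\left[e^{-q\tau^-_b}\right]
=\bE_x\!\left[e^{-q\tau^-_b}\right](\rho(b)+C),\quad b\neq x.
\end{align}
Your sketch of ``reducing the comparison of $v_{b_1}(x)$ and $v_{b_2}(x)$ to a comparison of $h(b_1)$ and $h(b_2)$ plus a residual'' is not quite right as stated: on the event $\{X_{\tau^-_{b_2}}\in[b_1,b_2)\}$ neither summand collapses to $h$, and it is not obvious that the residual is controllable by Lemma \ref{Lem201} alone.

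A further point you elide: having the sign of the right-hand derivative only at $b\neq x$ does not by itself yield monotonicity. The paper resolves this with a contradiction argument (choose $a_1<a_2<b^*$ with $x\notin[a_1,a_2]$ where the putative monotonicity fails, subtract the chord, and derive a sign contradiction at an interior minimizer using the continuity from Lemma \ref{Lem202}). You could instead invoke the standard fact that a continuous function with nonnegative right-hand derivative on an interval is nondecreasing there, but either way this piece must appear explicitly. Also, Lemma \ref{Lem402}(ii) is not actually needed here: $\rho(b)+C\geq 0$ on $[b^*,\infty)$ follows from the definition \eqref{def_b_star} and the monotonicity of $\rho$, which is enough for the conclusion.
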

The rest of this subsection is devoted to the proof of Theorem \ref{Thm202}.


\par

The following lemma {gives key expressions of}  the right-hand derivative {of $v_b$} with respect to the barrier $b$, {written in terms of the {first down-crossing} time \eqref{def_tau_b}}. 

\begin{lemma}\label{Lem202}


Fix $x \in \R$. 

(i) {The function} $b \mapsto v_{b}^{(1)} (x) $ is continuous on $\R$. In particular, for $b \neq x$, the right-hand derivative of $b \mapsto v_b^{(1)}(x)$ exists and
$\lim_{\varepsilon \downarrow 0} (v^{(1)}_{{b+\varepsilon} }   (x) - v^{(1)}_{b} (x) )/\varepsilon
=\bE_x \left[ \int_{\tau^-_b}^\infty e^{-qt} f^\prime_+(U^{b}_t)\diff t\right]$.

(ii) {The function}  $b \mapsto v_{b}^{(2)} (x) $ is continuous on $\R$. In particular, for $b \neq x$, the right-hand derivative of $b \mapsto v_b^{(2)}(x)$ exists and
$\lim_{\varepsilon \downarrow 0} (v^{(2)}_{{b+\varepsilon} }   (x) - v^{(2)}_{b} (x) ) / \varepsilon
= \bE_x \left[ e^{-q\tau^-_{b}} \right]$.

(iii) 
{The function} $b \mapsto v_{b} (x) $ is continuous on $\R$. In particular, for $b \neq x$,   we have 
\begin{align}
\lim_{\varepsilon \downarrow 0}\frac{v_{{b+\varepsilon} }   (x) - v_{b} (x)  }{\varepsilon}
=\bE_x \left[ \int_{\tau^-_b}^\infty e^{-qt} f^\prime_+(U^{b}_t)\diff t\right] +C\bE_x \left[ e^{-q\tau^-_{b}} \right]. \label{10}
\end{align}
\end{lemma}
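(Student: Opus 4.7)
The plan is to exploit the pathwise representation $U^b_t = X_t + (b - I_t)^+$ with $I_t := \inf_{s \in [0,t]} X_s$, which makes the dependence on $b$ transparent. For each $\omega$ and $t$, $b \mapsto U^b_t$ is $1$-Lipschitz, and since convex $f$ is continuous, $b \mapsto f(U^b_t)$ is continuous. Combining this with Remark \ref{Rem201} and Lemma \ref{Rem202} for $v^{(1)}$, and with the Fubini rewrite $v^{(2)}_b(x) = q \bE_x [\int_0^\infty e^{-qt} R^b_t \diff t]$ together with Lemma \ref{Thm201} for $v^{(2)}$, dominated convergence yields continuity of both maps on $\bR$.

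Next, for the right-hand derivative at $b \neq x$, I would establish the pathwise estimate
\[
\Delta^\varepsilon_t := R^{b+\varepsilon}_t - R^b_t = (b+\varepsilon - I_t)^+ - (b - I_t)^+ \in [0,\varepsilon],
\]
whose three-case resolution gives $\Delta^\varepsilon_t = \varepsilon$ on $\{I_t \leq b\}$, $\Delta^\varepsilon_t = 0$ on $\{I_t > b+\varepsilon\}$, and $\Delta^\varepsilon_t \in (0,\varepsilon)$ on $\{b < I_t \leq b+\varepsilon\}$. In particular $0 \leq \varepsilon^{-1}\Delta^\varepsilon_t \leq 1$, and $\varepsilon^{-1}\Delta^\varepsilon_t \to \mathbf{1}_{\{I_t \leq b\}}$ pointwise as $\varepsilon \downarrow 0$. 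The hypothesis $b \neq x$ enters through Lemma \ref{Rem201A}(ii) (transferred by spatial homogeneity), which supplies $\tau^-_{b+\varepsilon} \to \tau^-_b$ and thereby identifies the limit with $\mathbf{1}_{\{t \geq \tau^-_b\}}$ for Lebesgue-a.e.\ $t$, $\bP_x$-a.s. Substituting into the Fubini identity and invoking bounded convergence yields part (ii):
\[
\lim_{\varepsilon \downarrow 0}\frac{v^{(2)}_{b+\varepsilon}(x) - v^{(2)}_b(x)}{\varepsilon} = q\bE_x\!\left[\int_{\tau^-_b}^\infty e^{-qt}\diff t\right] = \bE_x\!\left[e^{-q\tau^-_b}\right].
\]

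For part (i), convexity of $f$ delivers the sandwich
\[
f^\prime_+(U^b_t)\,\Delta^\varepsilon_t \leq f(U^{b+\varepsilon}_t) - f(U^b_t) \leq f^\prime_+(U^{b+\varepsilon}_t)\,\Delta^\varepsilon_t,
\]
whose pointwise limit after division by $\varepsilon$ is $f^\prime_+(U^b_t)\mathbf{1}_{\{t \geq \tau^-_b\}}$ by right-continuity of $f^\prime_+$ (combined with $U^{b+\varepsilon}_t \downarrow U^b_t$). The main obstacle is producing a dominating function: since $f^\prime_+$ is nondecreasing and $U^b_t \leq U^{b+\varepsilon}_t \leq U^{b+1}_t$ for $\varepsilon \in (0,1]$, the sandwich gives
\[
\varepsilon^{-1}|f(U^{b+\varepsilon}_t) - f(U^b_t)| \leq |f^\prime_+(U^b_t)| + |f^\prime_+(U^{b+1}_t)|,
\]
and the right-hand side is $\bE_x[\int_0^\infty e^{-qt}(\cdot)\diff t]$-integrable by Lemma \ref{Lem201} applied at both $b$ and $b+1$. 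Dominated convergence then delivers the formula in (i), and (iii) is immediate by linearity from (i) and (ii).
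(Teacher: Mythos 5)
Your proof is correct and follows essentially the same route as the paper: the same pathwise comparison of $U^{b+\varepsilon}$ and $U^b$ on the three time regions delimited by $\tau^-_{b+\varepsilon}$ and $\tau^-_b$, the same appeal to Lemma \ref{Rem201A}(ii) for $b\neq x$, the same domination via Lemma \ref{Lem201}. The only cosmetic differences are the Fubini rewrite $v^{(2)}_b(x)=q\bE_x[\int_0^\infty e^{-qt}R^b_t\,\diff t]$ (which lets you run bounded convergence on $\varepsilon^{-1}\Delta^\varepsilon_t$ instead of bounding the Stieltjes integral by $\bE_x[e^{-q\tau^-_{b+\varepsilon}}]$ as the paper does) and the unified convexity sandwich in (i), which avoids explicitly splitting the time integral at the passage times.
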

\begin{proof}
{Fix 
$\varepsilon>0$.} 
Here, we focus on the pathwise behaviors of $R^{b+\varepsilon}$, $R^b$, $U^{b+\varepsilon}$ and $U^b$. 
For $t\in[0, \tau^-_{b+\varepsilon})$ {at which $X_t \geq b+ \varepsilon > b$}, we have 
\begin{align}
U^{b+\varepsilon}_t =U^b_t=X_t, \quad R^{b+\varepsilon}_t=R^b_t=0. \label{6}
\end{align}
For $t\in [\tau^-_{b+\varepsilon} , \tau^-_b)$ at which $b \leq \inf_{s \in [0, t]}X_s \leq b+\varepsilon$, 
we have
\begin{align}
\label{45}&R^{b+\varepsilon}_t=-\inf_{s\in [0, t]}(X_s-(b+\varepsilon))\leq \varepsilon , \quad R^b_t=0, \\
&U^{b+\varepsilon}_t =X_t+R^{b+\varepsilon}_t, \quad U^b_t=X_t,  
\end{align} 
implying
\begin{align}
\label{7}0\leq U^{b+\varepsilon}_t-U^b_t\leq \varepsilon. 
\end{align}
For $t \in [\tau^-_b ,\infty)$ {at which $\inf_{s \in [0, t]}X_s \leq b < b+\varepsilon$,} we have 
\begin{align}
&\label{13}R^{b+\varepsilon}_t=-\inf_{s\in [0, t]}(X_s-(b+\varepsilon))= -\inf_{s\in [0, t]}(X_s-b)+\varepsilon=R^b_t+\varepsilon, \\
&\label{8}
U^{b+\varepsilon}_t =X_t+R^{b+\varepsilon}_t
=X_t+R^{b}_t+\varepsilon =U^b_t+\varepsilon. 
\end{align}
{In addition, by replacing $b$ and $b+\varepsilon$ with $b-\varepsilon$ and $b$, respectively, we obtain, for $t \geq 0$, $0\leq U^b_t-U^{b-\varepsilon}_t \leq \varepsilon$ and $R^b_t-R^{b-\varepsilon}_t \leq \varepsilon$. Putting these together, for any $\varepsilon \in \R$, we have {for all $t \geq 0$}
\begin{align}
&0\leq |U^{b+\varepsilon}_t-U^{b}_t| \leq {|\varepsilon|} \quad \textrm{and} \quad 0\leq |R^{b+\varepsilon}_t-R^{b}_t| \leq {|\varepsilon|}. \label{49}
\end{align}}
{(i)} 
{From 
\eqref{49},} the convexity of $f$ and Lemma \ref{Lem201}, 
for $b\in\bR$ and $\varepsilon\in(-1, 1)$ 
the triangle inequality and the mean value theorem {give}
\begin{align}
\left| v_{b+\varepsilon}^{(1)}(x) - v_{b}^{(1)}(x)\right|
\leq&\bE_x \left[\int_0^\infty e^{-qt} \left|f(U^{b+\varepsilon}_t)-f(U^{b}_t)  \right| \diff t\right] \\ \leq& \bE_x \left[\int_0^\infty e^{-qt} \left|U_t^{b+\varepsilon} -U^{b}_t  \right| \sup_{U_t^b - 1 < y < U_t^b + 1} |f_+'(y) | \diff t\right] \\
\leq&\left| \varepsilon\right|\bE_x \left[\int_0^\infty e^{-qt} \left( \left|f^\prime_+(U^b_t-1) \right| \lor \left|f^\prime_+(U^b_t+1) \right| \right) \diff t\right] {\xrightarrow{\varepsilon \downarrow 0} 0,}
\end{align}
{implying} that $b\mapsto  v_{b}^{(1)}(x)$ is continuous for all $x \in\bR$.

{Fix $\varepsilon > 0$ and $b \neq x$.}
{Let}
\begin{align*}
{A_b^{(1)}(\varepsilon) :=} \frac{v_{b+\varepsilon}^{(1)}(x)-v_{b}^{(1)}(x)}{\varepsilon}=
\bE_x \left[ \int_0^\infty e^{-qt}\frac{f(U^{b+\varepsilon}_t)-f(U^{b}_t) }{\varepsilon}\diff t\right].
\end{align*}
By \eqref{6} and \eqref{8}, we have 
\begin{align*}
\int_{0}^\infty e^{-qt}\frac{f(U^{b+\varepsilon}_t)-f(U^{b}_t) }{\varepsilon}\diff t =
\int_{\tau^-_{b+\varepsilon}}^{\tau^-_{b}} e^{-qt}\frac{f(U^{b+\varepsilon}_t)-f(U^{b}_t) }{\varepsilon}\diff t
+\int_{\tau^-_{b}}^\infty e^{-qt}\frac{f(U^{b}_t+\varepsilon)-f(U^{b}_t) }{\varepsilon}\diff t. 
\end{align*}
Here, since $f$ is convex and by \eqref{7}, 
we have  
\begin{align}
\int_{\tau_{b+\varepsilon}^{-}}^{\tau_b^{-}} e^{-qt} \frac {|f(U^{b+\varepsilon}_t)-f(U^{b}_t)|} \varepsilon \diff t  \leq \int_{\tau_{b+\varepsilon}^{-}}^{\tau_b^{-}} e^{-qt} 
\big( |f^\prime_+(U^{b}_t)|\lor|f^\prime_+(U^{b+\varepsilon}_t)| \big) \diff t \xrightarrow{\varepsilon \downarrow 0} 0,
\end{align}
{where the last limit holds because $\tau_{b+\varepsilon}^- \xrightarrow{\varepsilon \downarrow 0} \tau_b^-$ {$\p_x$-}a.s.\ {for $x \neq b$} from Lemma \ref{Rem201A}(ii).}
{This limit together with} Lemma \ref{Lem201} and the dominated convergence theorem {shows},  for $x, b\in\bR$ with $x\neq b$, 
$ \lim_{\varepsilon\downarrow0}
{A_b^{(1)}(\varepsilon)} 
=
\bE_x \left[ \int_{\tau^-_b}^\infty e^{-qt} f^\prime_+(U^{b}_t)\diff t\right],
$ {as desired.}

\par 
(ii) If $\varepsilon > 0$, from \eqref{6}, \eqref{45} and \eqref{13}, $t \mapsto R^{b+\varepsilon}_t-R^{b}_t$ stays at $0$ on {$[0, \tau_{b+\varepsilon}^-)$},
increases to $\varepsilon$ on {$[\tau_{b+\varepsilon}^-, \tau_{b}^-)$} and stays at $\varepsilon$ afterwards, and therefore
\begin{align}
\varepsilon\bE_x \left[ e^{-q\tau^-_{b}} \right]\leq\bE_x \left[ \int_0^\infty e^{-qt}  \diff (R^{b+\varepsilon}_t-R^{b}_t) \right] \leq \varepsilon\bE_x \left[ e^{-q\tau^-_{b+\varepsilon}} \right]. \label{bound_R_difference}
\end{align}
{Analogous bound holds for the case $\varepsilon < 0$. Hence,}
the map $b\mapsto  v_{b}^{(2)}(x)$ is continuous for all $x \in\bR$ since 
$| v_{b+\varepsilon}^{(2)}(x)-v_{b}^{(2)}(x) |
{=}  \left|\bE_x \left[ \int_0^\infty e^{-qt}  \diff (R^{b+\varepsilon}_t-R^{b}_t) \right] \right| \leq|\varepsilon| {\xrightarrow{|\varepsilon| \downarrow 0} 0}$. 

Now let $\varepsilon > 0$ and define
\begin{align}
A_b^{(2)}(\varepsilon) :=  \frac{v_{b+\varepsilon}^{(2)}(x)-v_{b}^{(2)}(x)}{\varepsilon}=\frac{1}{\varepsilon}
\bE_x \left[ \int_0^\infty e^{-qt}  \diff (R^{b+\varepsilon}_t-R^{b}_t) \right].  \label{9}
\end{align}
For $x, b\in\bR$ 
with $x\neq b$, by {Lemma} \ref{Rem201A}(iii) and \eqref{bound_R_difference},
$\lim_{\varepsilon\downarrow0}{A_b^{(2)}(\varepsilon)} 
= \bE_x \left[ e^{-q\tau^-_{b}} \right], 
$ {as desired.}
\par
Finally, (iii) is immediate by (i) and (ii).
\end{proof}
{We are now ready to complete the proof of Theorem \ref{Thm202}.}
\begin{proof}[Proof of Theorem \ref{Thm202}.]
By {\eqref{10} and} the strong Markov property {and the fact that $U_{\tau_b^-} = b$ on $\{ \tau_b^- < \infty\}$},
\begin{align}
{ \lim_{\varepsilon \downarrow 0}\frac{v_{{b+\varepsilon} }   (x) - v_{b} (x)  }{\varepsilon} } =\bE_x \left[ e^{-q\tau^-_{b}} \right] \left(
{\rho(b)}
+C\right), 
\quad b \in \bR \backslash\{x\}, \label{53}
\end{align}
which is, by \eqref{def_b_star}, 
no more than zero for $b<b^\ast$ and no less than zero for $b\geq b^\ast$.
\par
{(i)} Fix $x \in \mathbb{R}$. We now prove that $b\mapsto v_b (x)$ is nonincreasing on $(-\infty , b^\ast)$ by contradiction. 

Suppose that $b\mapsto v_b (x)$ fails to be nonincreasing on $(-\infty , b^\ast)$. 
Then there exist {$a_1< a_2 < b^\ast$} such that $v_{a_1} (x)<v_{a_2} (x)$ {and} $x \not\in[a_1, a_2]$.
We define the function on $[a_1, a_2]$
\begin{align}
f^{(x)}(y):=v_{y} (x)-(y-a_1)\frac{v_{a_2} (x)-v_{a_1} (x)}{a_2-a_1}, \quad {a_1 \leq y \leq a_2.} \label{v_x_y}
\end{align} 
Since ${y \mapsto f^{(x)}(y)}$ is a continuous function on {[$a_1,a_2$]} by Lemma \ref{Lem202}{(iii)}  and since $f^{(x)}(a_1)=f^{(x)}(a_2) =v_{a_1}(x)$, there exists {a minimizer} $a_3\in [a_1, a_2)$ {such that}
\begin{align}
f^{(x)}(a_3)= \min_{y \in [a_1, a_2]} f^{(x)} (y). \label{43}
\end{align}
From Lemma {\ref{Lem202}(iii) (showing that the right-hand derivative of $f^{(x)}$ in $y$, {say $f^{(x),\prime}_+$,} exists)} and \eqref{43}, 
we have $f^{(x),\prime}_+(a_3)\geq 0$, and {consequently by \eqref{v_x_y},} $\lim_{\varepsilon \downarrow 0} (v_{{a_3+\varepsilon} }   (x) - v_{a_3} (x) ) / \varepsilon \geq (v_{a_2} (x)-v_{a_1} (x)) / (a_2-a_1) >0$. This contradicts the fact that \eqref{53} {when setting} $b=a_3 {< b^*}$ is less than $0$. 

{(ii)}  From the same argument we can prove that $b\mapsto v_b (x)$ is nondecreasing on $(b^\ast, \infty)$ for $x \in \bR$. 
\end{proof}
\subsection{Slopes and convexity of $v_b$}

\begin{lemma}\label{Lem203}
We fix $b\in\bR$. 

{(i)} The function {$x  \mapsto v_b(x)$} is continuous {on $\R$} and
 has the {right- and left-hand} derivatives given by 
\begin{align}
\label{34}
v_{b,+}^\prime(x)&=\bE_x \Big[ \int_0^{\tau^{-}_b} e^{-qt} f^\prime_+(X_t)\diff t \Big]-C\bE_x\left[e^{-q\tau^-_b}\right], \quad x\in\bR, \\
v_{b,-}^\prime(x)&=\bE_x \Big[ \int_0^{\tau^{-}_b} e^{-qt} f^\prime_-(X_t)\diff t \Big]-C\bE_{{x-}}\left[e^{-q\tau^-_b}\right], \quad x\in\bR {\backslash\{ {b}\}}, 
\end{align}
{where the left-hand limit $\bE_{{x-}} [e^{-q\tau^-_b} ] :=\lim_{y \uparrow x}\bE_{y} [e^{-q\tau^-_b} ]  $ exists because $\tau^-_b$ is monotone in the starting value of the process.}

{(ii)} In particular, for $x \neq b$, it is differentiable with $v_{b}^\prime(x) = v_{b,+}^\prime(x) = v_{b,-}^\prime(x)$.

\end{lemma}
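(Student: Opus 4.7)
The plan is to adapt the coupling technique of Lemma \ref{Lem202}, now differentiating with respect to the starting position $x$ rather than the barrier $b$. I would fix $b \in \R$ and work on a single probability space carrying a L\'evy process $X$ with $X_0 = 0$; for each starting value $y$ I would write $U^{b,y}_t = y + X_t + R^{b,y}_t$ with $R^{b,y}_t = (b-y-\underline{X}_t)^+$ for $\underline{X}_t := \inf_{s \leq t}X_s$, and set $\sigma_y := \inf\{t>0 : X_t < b-y\}$, so that by translation $\sigma_y$ has, under $\bP$, the law of $\tau^-_b$ under $\bP_y$. A direct inspection of the positive-part reflector across the three time regions $[0,\sigma_x)$, $[\sigma_x,\sigma_{x+\varepsilon})$, $[\sigma_{x+\varepsilon},\infty)$ shows that for $\varepsilon > 0$ the inequalities $|U^{b,x+\varepsilon}_t - U^{b,x}_t| \leq \varepsilon$ and $|R^{b,x+\varepsilon}_t - R^{b,x}_t| \leq \varepsilon$ hold pointwise in $t$, and that on the last region the two controlled processes coincide and the two controls differ by exactly $\varepsilon$. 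Together with the convexity of $f$ and Lemma \ref{Lem201}, this yields Lipschitz estimates on $v_b^{(1)}$ and $v_b^{(2)}$, hence continuity of $v_b$; continuity at $x=b$ is also immediate from \eqref{v_b_below}.

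To obtain $v^\prime_{b,+}(x)$ I would split the difference quotient of $v_b^{(1)}$ into the contribution on $[0, \sigma_x)$, where the integrand reduces to $(f(x+\varepsilon+X_t)-f(x+X_t))/\varepsilon$, and on $[\sigma_x,\sigma_{x+\varepsilon})$ (the integral from $\sigma_{x+\varepsilon}$ onwards vanishes since the coupled controlled processes coincide). On the first piece, convexity provides the pointwise domination $|f(x+\varepsilon+X_t)-f(x+X_t)|/\varepsilon \leq |f^\prime_+(x+X_t-1)| + |f^\prime_+(x+X_t+1)|$, convergence to $f^\prime_+(x+X_t)$ holds pointwise, and Lemma \ref{Lem201} supplies the integrable majorant, so dominated convergence delivers the limit $\bE_x[\int_0^{\tau^-_b}e^{-qt}f^\prime_+(X_t)\diff t]$. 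The middle piece vanishes because $\sigma_{x+\varepsilon}\to\sigma_x$ by Lemma \ref{Rem201A}(ii) and the integrand admits the same integrable bound. For $v^{(2),\prime}_{b,+}(x)$, the pathwise picture shows that $R^{b,x}_t - R^{b,x+\varepsilon}_t$ is non-decreasing from $0$ to $\varepsilon$ with all mass on $[\sigma_x,\sigma_{x+\varepsilon}]$, yielding the sandwich
\begin{align}
-\varepsilon\, \bE[e^{-q\sigma_x}] \leq v_b^{(2)}(x+\varepsilon) - v_b^{(2)}(x) \leq -\varepsilon\, \bE[e^{-q\sigma_{x+\varepsilon}}],
\end{align}
which, after dividing by $\varepsilon$ and invoking (the translation of) Lemma \ref{Rem201A}(iii), gives $-\bE_x[e^{-q\tau^-_b}]$ for every $x \in \R$.

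The left-hand derivative at $x \neq b$ is obtained by the mirror comparison between $x-\varepsilon$ and $x$: the sandwich for $v_b^{(2)}$ now involves $\bE[e^{-q\sigma_{x-\varepsilon}}]$, whose limit as $\varepsilon\downarrow 0$ exists by monotonicity of $y \mapsto \sigma_y$ and defines $\bE_{x-}[e^{-q\tau^-_b}]$; the difference quotient of $f$ converges to $f^\prime_-$ instead of $f^\prime_+$; and $\sigma_{x-\varepsilon} \to \sigma_x$ for $x \neq b$ by Lemma \ref{Rem201A}(ii), so the same dominated-convergence argument applies. For part (ii), continuity of $y\mapsto \bE_y[e^{-q\tau^-_b}]$ at $x\neq b$ (a translation of Lemma \ref{Rem201A}(iii)) forces $\bE_{x-}[e^{-q\tau^-_b}]=\bE_x[e^{-q\tau^-_b}]$, so the $C$-terms coincide. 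It remains to show $\bE_x[\int_0^{\tau^-_b} e^{-qt}(f^\prime_+(X_t)-f^\prime_-(X_t))\diff t]=0$: by convexity, $f^\prime_+-f^\prime_-$ is supported on an at most countable set $D$, and by Assumption \ref{Ass101}(i) and \cite[Proposition I.15]{Ber1996} (already exploited in the proof of Lemma \ref{Lem402}) the $q$-potential of $X$ has no atoms, so $\bE_x[\int_0^\infty e^{-qt}\mathbf{1}_{\{X_t=y\}}\diff t]=0$ for each $y\in D$, and summing over $D$ kills the integral.

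The main obstacle I anticipate is the careful pathwise bookkeeping across the three time regions defined by $\sigma_x$ and $\sigma_{x+\varepsilon}$ (together with their left-sided analogues), and securing the integrable dominations from convexity plus Lemma \ref{Lem201} that are needed to justify each dominated-convergence passage, in particular to establish that the middle-interval contributions truly vanish.
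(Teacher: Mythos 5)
Your proposal reproduces the paper's own argument: the same translation-coupling (fix one copy of $X$, shift the initial value), the same three-region pathwise bookkeeping across $[0,\sigma_x)$, $[\sigma_x,\sigma_{x+\varepsilon})$, $[\sigma_{x+\varepsilon},\infty)$, the same use of convexity plus Lemma~\ref{Lem201} as an integrable majorant, the vanishing middle piece via Lemma~\ref{Rem201A}(ii), the sandwich bound for $v_b^{(2)}$, and the atomless-potential / Lemma~\ref{Rem201A}(iii) argument for part (ii). The only cosmetic differences are notation (you write $U^{b,y}$ and $\sigma_y$ where the paper writes $U^{(y),b}$ and $\tau_b^{(y)}$) and a minor imprecision in invoking \eqref{v_b_below} for continuity at $x=b$ (that identity only covers $x\le b$; the Lipschitz estimate you already derived is what actually supplies two-sided continuity there, exactly as in the paper).
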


\begin{proof}
{(i)} For $y\in \bR$ and $t \geq 0$, we write $X^{(y)}_t:=X_t+y$ {and} $\tau^{(y)}_a := \inf\{ t> 0: X^{(y)}_t < a\}$ for $a \in \bR$.
{Analogously, let $R^{(y),b}_t := 
{-\inf_{s\in[0,t]} \{(X_s^{(y)}-b)\land0} \}
$ and $U^{(y),b}_t := X^{(y)}_t + R^{(y),b}_t$ be the corresponding barrier strategy with barrier $b$ and controlled  process corresponding to $X^{(y)}$.} 
Then we have, for $\varepsilon >0$, 
\begin{align}
&\label{19}\frac{v_b^{(1)}(x+\varepsilon)-v_b^{(1)}(x)}{\varepsilon}= {\bE} \left[ \int_0^\infty e^{-qt} \frac{f(U^{(x+\varepsilon),b}_t)-f(U^{(x),b}_t)}{\varepsilon}\diff t \right], \\
&\label{20}\frac{v_b^{(2)}(x+\varepsilon)-v_b^{(2)}(x)}{\varepsilon}=\frac{1}{\varepsilon} {\bE} \left[ \int_{[0,\infty)}e^{-qt}\diff (R^{(x+\varepsilon),b}_t - R^{(x),b}_t)\right]. 
\end{align}
Here, similar to the arguments in the proof of Lemma \ref{Lem202}, we focus on the {pathwise} behaviors of $R^{(x+\varepsilon),b}$, $R^{(x),b}$, $U^{(x+\varepsilon),b}$, and $U^{(x),b}$. 
For $t\in[0, \tau^{(x)}_{b})$ {(so that $X_t^{(x+\varepsilon)} \geq  X_t^{(x)} \geq b$)}, 
we have 
\begin{align}
\label{14}U^{(x+\varepsilon),b}_t=X^{(x+\varepsilon)}_t=X^{(x)}_t+\varepsilon=U^{(x),b}_t+\varepsilon, \quad R^{(x+\varepsilon),b}_t=R^{(x),b}_t=0. 
\end{align}
For $t\in [\tau^{(x)}_{b} , \tau^{(x+\varepsilon)}_{b})$ {(so that $0 \leq \inf_{s\in[0,t]}(X_s^{(x+\varepsilon)}-b) \leq \varepsilon$),}
we have
\begin{align}
\begin{split}
&R^{(x),b}_t
{= -\inf_{s\in [0, t]}(X^{(x)}_s-b)}
{ = -\inf_{s\in[0,t]}((X_s^{(x+\varepsilon)}-b)-\varepsilon)}
\leq \varepsilon , \; R^{(x+\varepsilon),b}_t=0, \; \\
&U^{(x),b}_t =X^{(x)}_t+R^{(x),b}_t, \; U^{(x+\varepsilon),b}_t=X^{(x+\varepsilon)}_t,  
\end{split}
 \label{15}
\end{align} 
that imply 
\begin{align}
\label{16}0\leq U^{(x+\varepsilon),b}_t-U^{(x),b}_t\leq \varepsilon. 
\end{align}
For $t \in [\tau^{(x+\varepsilon)}_{b} ,\infty)$ {(so that $\inf_{s\in[0,t]}(X_s^{(x)}-b) \leq \inf_{s\in[0,t]}(X_s^{(x+\varepsilon)}-b) \leq 0$)},  we have 
\begin{align}
\label{17}&R^{(x),b}_t=
-\inf_{s\in[0,t]} (X_s^{(x)}-b)  = -\inf_{s\in [0, t]}(X^{(x+\varepsilon)}_s-b)+\varepsilon=R^{(x+\varepsilon),b}_t+\varepsilon, \\
\label{18}&
U^{(x),b}_t =X^{(x)}_t+R^{(x),b}_t
= {(X^{(x+\varepsilon)}_t - \varepsilon) +(R^{(x+\varepsilon),b}_t + \varepsilon)}=U^{(x+\varepsilon),b}_t. 
\end{align}
By \eqref{14} and \eqref{18}, 
we have 
\begin{align}
& \int_0^\infty e^{-qt} (f(U^{(x+\varepsilon),b}_t)-f(U^{(x),b}_t)) \diff t \\
 &= \int_0^{\tau_b^{(x)}} e^{-qt} (f(U^{(x),b}_t+\varepsilon)-f(U^{(x),b}_t)) \diff t  + \int_{\tau_b^{(x)}}^{\tau_b^{(x+\varepsilon)}} e^{-qt} (f(U^{(x+\varepsilon),b}_t)-f(U^{(x),b}_t)) \diff t. \label{001aa}
\end{align}
{Here, since} $f$ is convex and by \eqref{16}, 
we have 
\begin{align}
\int_{\tau_b^{(x)}}^{\tau_b^{(x+\varepsilon)}} e^{-qt} \frac {|f(U^{(x+\varepsilon),b}_t)-f(U^{(x),b}_t)|} \varepsilon \diff t  \leq \int_{\tau_b^{(x)}}^{\tau_b^{(x+\varepsilon)}} e^{-qt} 
\big( |f^\prime_+(U^{(x),b}_t)|\lor| 
{f^\prime_+(U^{(x),b}_t + \varepsilon)}| \big) \diff t \xrightarrow{\varepsilon \downarrow 0} 0,
\end{align}
{where the last limit holds because $\tau_b^{(x + \varepsilon)} \xrightarrow{\varepsilon \downarrow 0} \tau_b^{(x)}$ a.s. from Lemma \ref{Rem201A}(ii) (noting that $\tau_b^{(x + \varepsilon)} = \tau_{b-x-\varepsilon}^-$ {and $\tau_b^{(x)} = \tau_{b-x}^-$} $\p$-a.s.).}

Therefore, by 
Lemma \ref{Lem201} and the dominated convergence theorem, {taking a limit in \eqref{19} gives}
\begin{multline}
\lim_{\varepsilon\downarrow0} { \frac{v_b^{(1)}(x+\varepsilon)-v_b^{(1)}(x)}{\varepsilon} }
= \bE \left[ \int_0^{\tau^{(x)}_b} e^{-qt} f^\prime_+(U^{(x),b}_t)\diff t \right] \\
= 
\bE_x \left[ \int_0^{\tau^{-}_b} e^{-qt} f^\prime_+(U^{b}_t)\diff t \right]
=\bE_x \left[ \int_0^{\tau^{-}_b} e^{-qt} f^\prime_+(X_t)\diff t \right], \quad x, b \in \bR. \label{21}
\end{multline}

{On the other hand, } by \eqref{14}, \eqref{15}, and \eqref{17}, {the difference \eqref{20} is bounded with}
$- \bE [e^{-q\tau^{(x)}_b} ]\leq
(v_b^{(2)}(x+\varepsilon)-v_b^{(2)}(x))/\varepsilon
\leq- \bE [e^{-q\tau^{(x+\varepsilon)}_b} ]$.
By 
the dominated convergence theorem and {using again} $\tau_b^{(x + \varepsilon)} \xrightarrow{\varepsilon \downarrow 0} \tau_b^{(x)}$ a.s., we have, for $x, b\in\bR$, 
$\lim_{\varepsilon\downarrow0} (v_b^{(2)}(x+\varepsilon)-v_b^{(2)}(x)) / \varepsilon
=-\bE_x [e^{-q\tau^-_b} ]$. 
This shows for the right-hand derivative. Similar arguments show for the left-hand derivative {for $x \neq b$}.

{The above arguments also show the continuity of $x \mapsto v_b(x)$ for $x \neq b$ and hence} it remains to show the continuity  for $x = b$. In view of
\eqref{14}, \eqref{15}, \eqref{16}, \eqref{17} and \eqref{18}, by replacing $b$ and $b+\varepsilon$ with $b-\varepsilon$ and $b$, respectively, we have, for $\varepsilon\in\bR$ and $t\geq 0$, 
$0\leq |U^{(x+\varepsilon),b}_t-U^{(x),b}_t| \leq |\varepsilon|$ and $0\leq |R^{(x+\varepsilon),b}_t-R^{(x),b}_t| \leq |\varepsilon|$. 
Thus we get the continuity in the same way as {the proof of } Lemma \ref{Lem202}.


{(ii) To show the differentiability {(i.e.\ the right- and left-hand derivatives obtained in (i) coincide)}  at $x \neq b$, {with the measure
$R^{(q)}(x, \cdot):=\bE_x \left[\int_0^\infty e^{-qt}1_{\{X_t\in\cdot\}}\diff t\right]$, since the points $y\in\bR$ such that $f^\prime_{+}(y)\neq f^\prime_- (y)$ are {at most} countable, }
\begin{align}
\absol{  \bE_x \left[ \int_0^{\tau^{-}_b} e^{-qt} f^\prime_+(X_t)\diff t \right]-
\bE_x \left[ \int_0^{\tau^{-}_b} e^{-qt} f^\prime_-(X_t)\diff t \right]  }
\leq \sum_{y\in[b,\infty)}\left|  f^\prime_{+}(y)  -f^\prime_-( y)\right| R^{(q)}(x, \{y\}), 
\end{align}
%
%
which is zero by \cite[Proposition I.15]{Ber1996} {(note that this holds even for $x=b$)}. In addition, $\bE_x [e^{-q\tau^-_b}]=\bE_{x-} [e^{-q\tau^-_b}]$ for $x \neq b$ by {Lemma} \ref{Rem201A}(iii). This shows  $v_{b,+}^\prime(x) = v_{b,-}^\prime(x)$ {for $x \neq b$}  as desired.
} 
\end{proof}

{The following lemma states that with the selection $b^*$ as in \eqref{def_b_star}, the function $v_{b^*}$ is continuously differentiable {on $\R$}.}
\begin{lemma}\label{Lem204}
For $x\in\bR$, the function $v_{b^\ast}$ is convex and {belongs to} $C^1(\mathbb{R})$ with its derivative given by 
\begin{align}
v_{b^\ast}^\prime(x)=\bE_x \left[ \int_0^{\infty} e^{-qt} f^\prime_{\varepsilon^\ast}(U^{b^\ast}_t)\diff t \right] 
\label{40}
\end{align}
{where $f^\prime_{\varepsilon^*}(y):=(1-\varepsilon^*) f^\prime_+(y) +{\varepsilon^*} f^\prime_-(y)$} for $y\in\bR$, 
for some $\varepsilon^\ast \in [0,1]$ (which is invariant to $x$).
In particular, we have 
$v_{b^\ast}^\prime(x)=-C$ for $x \in (-\infty , b^\ast]$.
\end{lemma}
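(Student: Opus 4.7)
The plan is to identify a mixing parameter $\varepsilon^\ast \in [0,1]$ for which the integral in \eqref{40} is consistent with the linear form of $v_{b^\ast}$ below $b^\ast$, then verify that formula separately on $(-\infty, b^\ast]$ and on $(b^\ast, \infty)$, and finally upgrade the resulting pointwise identity to $C^1$-regularity and convexity via continuity and monotonicity. To choose $\varepsilon^\ast$, I would let $\rho^\varepsilon(b) := \bE[\int_0^\infty e^{-qt} f'_\varepsilon(U^0_t + b) \diff t] = (1-\varepsilon)\rho(b) + \varepsilon\rho^1(b)$, with $\rho^1(b) := \bE[\int_0^\infty e^{-qt} f_-'(U^0_t + b) \diff t]$. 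Dominated convergence together with the identity $\lim_{y'\uparrow y} f_+'(y') = f_-'(y)$ (valid for convex $f$) would give $\rho^1(b^\ast) = \lim_{b \uparrow b^\ast} \rho(b)$, and then \eqref{def_b_star} together with the right-continuity of $\rho$ would yield $\rho^1(b^\ast) + C \leq 0 \leq \rho(b^\ast) + C$; the intermediate value theorem applied to the affine map $\varepsilon \mapsto \rho^\varepsilon(b^\ast)$ then supplies $\varepsilon^\ast \in [0,1]$ with $\rho^{\varepsilon^\ast}(b^\ast) = -C$.

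Next I would verify \eqref{40}. On $(-\infty, b^\ast]$, \eqref{v_b_below} already gives $v_{b^\ast}'(x) = -C$, while the proposed right-hand side equals $\rho^{\varepsilon^\ast}(b^\ast) = -C$ because $U^{b^\ast}$ is pushed instantaneously to $b^\ast$ under $\bP_x$. For $x > b^\ast$, Lemma \ref{Lem203} gives $v_{b^\ast}'(x) = \bE_x[\int_0^{\tau^-_{b^\ast}} e^{-qt} f_+'(X_t) \diff t] - C\bE_x[e^{-q\tau^-_{b^\ast}}]$; by \cite[Proposition I.15]{Ber1996} (applicable thanks to Assumption \ref{Ass101}(i)), $f_+'$ may be replaced by $f'_{\varepsilon^\ast}$ inside that integral. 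Splitting the candidate expression at $\tau^-_{b^\ast}$ using $U^{b^\ast}_t = X_t$ on $\{t < \tau^-_{b^\ast}\}$ and the strong Markov property (together with the fact that $U^{b^\ast}$ restarts from $b^\ast$ because $X_{\tau^-_{b^\ast}} \leq b^\ast$), I should recover exactly $v_{b^\ast}'(x)$.

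Finally, letting $\tilde v$ denote the proposed right-hand side of \eqref{40}, the pathwise bounds $|U_t^{(x+\varepsilon),b^\ast} - U_t^{(x),b^\ast}| \leq |\varepsilon|$ used in the proof of Lemma \ref{Lem203}, combined with a polynomial dominant for $|f'_{\varepsilon^\ast}(U^{b^\ast}_t)|$ coming from Assumption \ref{Ass101}(ii) and Remark \ref{Rem201}, would give $\tilde v \in C(\bR)$ by dominated convergence. Since $\tilde v \equiv v_{b^\ast}'$ on $\bR\setminus\{b^\ast\}$ by the previous step, this forces $\lim_{x \to b^\ast} v_{b^\ast}'(x) = \tilde v(b^\ast) = -C$, from which $v_{b^\ast} \in C^1(\bR)$ with $v_{b^\ast}'(b^\ast) = -C$ by a standard fundamental-theorem-of-calculus argument. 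Convexity follows from the pathwise monotonicity of $U^{b^\ast}_t$ in its starting value (again read off the proof of Lemma \ref{Lem203}) and the monotonicity of $f'_{\varepsilon^\ast}$, which render $\tilde v = v_{b^\ast}'$ nondecreasing on $\bR$. The main obstacle I foresee is the smooth fit at $x = b^\ast$: applying Lemma \ref{Lem203} directly at $x = b^\ast$ only yields $v_{b^\ast,+}'(b^\ast) = \rho(b^\ast) - (\rho(b^\ast) + C)\bE_{b^\ast}[e^{-q\tau^-_{b^\ast}}]$, which equals $-C$ only when $\rho(b^\ast) + C = 0$ or $\tau^-_{b^\ast} = 0$ a.s.\ under $\bP_{b^\ast}$; the averaged representation with $\varepsilon^\ast$ is the device that sidesteps this subtlety in full generality by replacing $\rho(b^\ast)$ with the calibrated $\rho^{\varepsilon^\ast}(b^\ast) = -C$.
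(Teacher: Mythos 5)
Your proposal follows essentially the same route as the paper: you identify $\varepsilon^*$ by sandwiching $-C$ between the left- and right-sided versions of $\rho(b^*)$ and then calibrating the convex combination, verify \eqref{40} on $(b^*,\infty)$ via Lemma \ref{Lem203}, the strong Markov property, and the atom-free replacement $f'_+ \leadsto f'_{\varepsilon^*}$ under the resolvent of $X$ (Bertoin, Prop.~I.15), read the identity trivially below $b^*$ from \eqref{v_b_below}, and then upgrade to $C^1$ and convexity using continuity and monotonicity of the representation. Your closing observation about why the $\varepsilon^*$-averaging is exactly the device that repairs smooth fit at $b^*$ is correct and is precisely the point of the paper's definition of $\varepsilon^*$ (the paper gives the explicit ratio rather than invoking the intermediate value theorem, but these are equivalent).

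The one step you state too loosely is the continuity of $\tilde v(x):=\bE_x\big[\int_0^\infty e^{-qt}f'_{\varepsilon^*}(U^{b^*}_t)\diff t\big]$. The pathwise bound $|U^{(x+\varepsilon),b^*}_t-U^{(x),b^*}_t|\le|\varepsilon|$ plus a polynomial dominant does \emph{not} by itself deliver dominated convergence, because $f'_{\varepsilon^*}$ is in general discontinuous on a countable set $D$, and the occupation (resolvent) measure of the reflected process $U^{b^*}$ can have a genuine atom at $b^*$ (this happens, for instance, when $X$ has bounded variation paths with negative drift, so that the reflected process spends positive expected Lebesgue time at the barrier); if $b^*\in D$ and $\varepsilon^*\in(0,1)$ then $f'_{\varepsilon^*}(U^{(x+\varepsilon),b^*}_t)$ need not converge to $f'_{\varepsilon^*}(U^{(x),b^*}_t)$ on the event $\{U^{(x),b^*}_t=b^*\}$. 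The paper avoids this by using the pathwise identity $U^{(x),b^*}_t=U^{(x-\delta),b^*}_t$ for $t\ge\tau^{(x)}_{b^*}$ to kill the post-$\tau^-_{b^*}$ contribution identically (not just in the limit), reducing the increment to a bound over $[0,\tau^{(x)}_{b^*})$ on which $U^{(x),b^*}_t = X^{(x)}_t$, and then invoking the atomlessness of the resolvent of \emph{$X$ itself} — which holds whenever $X$ is not a compound Poisson process — rather than of the reflected process. The paper also exploits convexity to get right-continuity of $v_{b^*,+}'$ for free and only establishes the left limit. If you carry out your ``dominated convergence'' argument with the same pre-/post-$\tau^-_{b^*}$ decomposition you already used for the $x>b^*$ verification, and add the Bertoin~I.15 atomlessness argument there too, your proof closes.

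Two very minor points of precision: \eqref{v_b_below} gives $v_{b^*}'(x)=-C$ only for $x<b^*$ (at $x=b^*$ it gives the left derivative only), and what you actually need at $x=b^*$ is exactly the smooth-fit computation you correctly flag at the end; also, your formula $v_{b^*,+}'(b^*)=\rho(b^*)-(\rho(b^*)+C)\bE_{b^*}[e^{-q\tau^-_{b^*}}]$ is obtained after the $f'_+\leadsto f'_{\varepsilon^*}$ replacement yields $-C$ regardless, which is the cleaner way to state your resolution.
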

\begin{proof}
See Appendix \ref{Sec0D}.
\end{proof}

\section{Verification {of optimality}} \label{section_verification}


In Theorem \ref{Thm202}, we showed the optimality of $\pi^{b^\ast}$ over the set of barrier strategies {$\mathcal{A}_{bar}$}. In this section, we {strengthen the result and} show the optimality over all admissible strategies {$\mathcal{A}$}.  

{
Our main result of this paper is as follows.
\begin{theorem}\label{theorem_main_2}
Under the setting decided in Section \ref{Sec02}, {the barrier strategy} $\pi^{b^\ast}$ is optimal {over $\mathcal{A}$}.
\end{theorem}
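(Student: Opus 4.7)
My plan is to verify optimality through the variational inequality (VI) associated with the problem,
\begin{align*}
\min\bigl\{(\mathcal{L}-q)v_{b^*}(x) + f(x),\ v_{b^*}'(x) + C\bigr\} = 0, \quad x\in\R,
\end{align*}
where $\mathcal{L}$ denotes the infinitesimal generator of $X$. Lemma \ref{Lem204} already gives what is needed for the second entry: $v_{b^*}$ is $C^1$ and convex with $v_{b^*}'(x)=-C$ on $(-\infty,b^*]$, so by convexity $v_{b^*}'(x)+C\ge 0$ for every $x\in\R$, with equality on the action region. For the first entry, equality on $(b^*,\infty)$ should follow from the strong Markov property applied at $\tau^-_{b^*}$: under $\pi^{b^*}$ starting at $x>b^*$ the process $U^{b^*}$ coincides with $X$ until $\tau^-_{b^*}$, and the martingality of $t \mapsto e^{-q(t\wedge\tau^-_{b^*})} v_{b^*}(X_{t\wedge\tau^-_{b^*}}) + \int_0^{t\wedge\tau^-_{b^*}} e^{-qs} f(X_s)\,ds$ translates into the generator identity on $(b^*,\infty)$. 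On $(-\infty,b^*]$, where $v_{b^*}(x)=v_{b^*}(b^*)+C(b^*-x)$ is affine, I would establish $(\mathcal{L}-q)v_{b^*}+f\ge 0$ by contradiction: if the inequality failed at some $x_0<b^*$, then by the derivative formulas of Lemma \ref{Lem202} a small downward shift of the barrier $b^*$ would strictly decrease $v_b(x_0)$, contradicting Theorem \ref{Thm202}.

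With the VI in hand, the verification follows the Meyer-Itô/Dynkin template. For an arbitrary $\pi\in\mathcal{A}$ I would apply the Meyer-Itô formula to $e^{-qt}v_{b^*}(U^\pi_t)$, which is legitimate because $v_{b^*}$ is $C^1$ with a nondecreasing (hence locally BV) derivative. The generator piece integrated against $dt$ dominates $-\int_0^t e^{-qs} f(U^\pi_{s-})\,ds$ by the first VI entry; the absolutely continuous part of $R^\pi$ contributes at least $-C\int_{[0,t]} e^{-qs}\,dR^{\pi,c}_s$ by the second; and for the jumps of $R^\pi$, convexity plus $v_{b^*}'\ge -C$ yields $v_{b^*}(U^\pi_s)-v_{b^*}(U^\pi_{s-})\ge -C\Delta R^\pi_s$. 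After taking expectations, the compensated jump integrals form a true martingale thanks to the exponential moment in Assumption \ref{Ass101}(ii) together with the polynomial growth of $v_{b^*}$ in Lemma \ref{Rem202}, so the martingale part vanishes. Sending $t\to\infty$, the admissibility condition \eqref{2}, the integrability of Remark \ref{Rem201}, and a standard localization by $\sigma_n:=\inf\{t:|U^\pi_t|>n\}$ would kill the boundary term $\E_x[e^{-qt}v_{b^*}(U^\pi_t)]$ and give $v_{b^*}(x)\le v_\pi(x)$.

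The hardest step will be the pointwise generator inequality $(\mathcal{L}-q)v_{b^*}+f\ge 0$ on the action region $(-\infty,b^*]$, where $v_{b^*}$ is only affine and the generator involves the nontrivial integral $\int[v_{b^*}(x+z)-v_{b^*}(x)+Cz\mathbf{1}_{|z|<1}]\,\Pi(dz)$ mixing affine and non-affine behaviour across $b^*$. The contradiction strategy adapted from \cite{AvrPalPis2007} seems essential here, since in the absence of scale functions there is no closed-form generator computation as in the spectrally one-sided case; the derivative identities of Lemma \ref{Lem202} should let one translate a putative violation of the VI into a strict improvement by a neighbouring barrier, directly contradicting Theorem \ref{Thm202}. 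The secondary difficulty is the $t\to\infty$ limit: since $v_{b^*}$ is unbounded below and $U^\pi$ may drift to $-\infty$, uniform integrability of $e^{-qt}v_{b^*}(U^\pi_t)$ must be handled carefully, but the linear growth of $v_{b^*}$ on $(-\infty,b^*]$ combined with the admissibility bound \eqref{2} and $\E[e^{\bar\theta|X_1|}]<\infty$ should make this tractable.
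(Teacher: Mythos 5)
Your skeleton is the same as the paper's --- a variational inequality plus a verification lemma, with the inequality on $(-\infty,b^*]$ established by a contradiction argument in the spirit of \cite{AvrPalPis2007} --- but there are two genuine gaps.

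First, the Meyer--It\^o shortcut does not close the regularity problem. For $X$ of unbounded variation (in particular with $\sigma>0$) the paper's generator \eqref{212b} involves a classical $v_{b^*}''$, and the VI $(\cL-q)v_{b^*}+f\ge 0$ is a pointwise statement about that classical second derivative. Convexity plus $C^1$ only gives you $v_{b^*}''$ as a nonnegative measure, possibly with a singular part; Meyer--It\^o then produces a local-time term $\tfrac12\int L^a_t\, v_{b^*}''(\diff a)$ that you cannot simply rewrite as $\tfrac12\sigma^2\int_0^t v_{b^*}''(U^\pi_s)\,\diff s$ and plug into the pointwise VI. More importantly, you cannot even \emph{derive} the pointwise equality $(\cL-q)v_{b^*}+f=0$ on $(b^*,\infty)$ from the martingale you write down unless you already know $v_{b^*}\in C^2$. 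The paper resolves this by first imposing Assumption \ref{assump_C_line} (so that $v_{b^*}\in C^2$ by Lemma \ref{Lem205a}), carrying out the verification under that assumption, and then removing it in Section \ref{subsection_optimality_general} by mollifying $f$ into $f^{(\varepsilon)}$, passing to the limit $\varepsilon\downarrow 0$ for the optimal barriers and value functions (Lemmas \ref{lemma_admissible_set_epsilon}--\ref{lemma_b002} and the displays around \eqref{c001}). That approximation layer is indispensable and is entirely absent from your plan.

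Second, the contradiction step on the action region does not follow from Lemma \ref{Lem202} in the direct way you suggest. The derivative formulas there give $\partial_b^+ v_b(x)$ in terms of $\tau_b^-$ and $f'_+$, and at $b=b^*$ that derivative is nonnegative in one direction and nonpositive in the other; a hypothetical violation of the VI at a single point $x_0<b^*$ does not by itself translate into a strict first-order improvement of $v_b(x_0)$ under a small shift of the barrier. The paper's Lemma \ref{Lem403} needs the intermediate identity
\begin{align}
v_b(x)-v_{b^*}(x)=\bE_x\!\left[\int_0^\infty e^{-qs}\,g(U^b_s)\,\mathbf 1_{\{U^b_s<b^*\}}\,\diff s\right],\qquad g:=(\cL-q)v_{b^*}+f,
\end{align}
together with the observations that $g$ is convex on $(-\infty,b^*)$ (because $v_{b^*}$ is affine there and both $v_{b^*}$ and $f$ are convex) and that $g(b^*)=0$ from Lemma \ref{Lem205}. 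Only then does $g(a)<0$ for some $a<b^*$ force $g<0$ on the whole interval $(a,b^*)$, make the right-hand side strictly negative for $b=a$ and suitable starting points, and contradict Theorem \ref{Thm202}. You should supply this occupation-measure identity and the convexity of $g$; invoking Lemma \ref{Lem202} alone is not enough.

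The localization and transversality parts of your outline (growth of $v_{b^*}$ from Lemma \ref{Rem202}, admissibility \eqref{2}, exponential moments from Assumption \ref{Ass101}(ii)) are fine and match the paper's treatment in Proposition \ref{Prop201}.
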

To show this main theorem, we first assume certain smoothness of $f$ for the unbounded variation case in Section \ref{subsection_optimality_under_smoothness} and then in Section \ref{subsection_optimality_general} we completely relax this condition.
}

\subsection{First optimality result} \label{subsection_optimality_under_smoothness}



For $n = 1,2$, let $C^{n}_{\text{poly}}$ be a subset of {$n$-times continuously differentiable functions} {$g$ in} $C^n(\bR)$ satisfying, for some $b_1, b_2>0$ and $M \in\bN$, 
$
\absol{g (x)} < b_1 {\absol{x}}^{{M}}+b_2$,  $x\in\bR.  \label{211b}
$

From Lemmas \ref{Rem202} and \ref{Lem204}, {we already know} $v_{b^*} \in C^{1}_{\text{poly}}$. 
{For now, we assume the following to ensure the  smoothness for the case $X$ has paths of unbounded variation.}
\begin{assump} \label{assump_C_line} {For the case $X$ is of unbounded variation, we assume
 the running cost function $f \in C^2(\bR)$ and $f^{\prime\prime}$ has polynomial growth in the tail. }
\end{assump}

The proof {of the following} is given in Appendix \ref{subsection_proof_Lem205a}.

\begin{lemma}\label{Lem205a}
{Under Assumption \ref{assump_C_line},}
function $v_{b^\ast}$ belongs to {${C^{2}}$ (and hence $C^{2}_{\text{poly}}$).}  
\end{lemma}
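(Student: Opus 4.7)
The plan is to split at $b^*$, establish $C^2$ smoothness separately on $(-\infty,b^*)$ and $(b^*,\infty)$, and then verify that the one-sided second derivatives agree at $b^*$. On $(-\infty,b^*)$, Lemma~\ref{Lem204} gives $v_{b^*}'\equiv -C$, so $v_{b^*}$ is affine with $v_{b^*}''\equiv 0$; on $(-\infty,b^*)$ the function is therefore $C^\infty$.

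On $(b^*,\infty)$, I would start from the first-derivative formula of Lemma~\ref{Lem203},
\[
v_{b^*}'(x)=\bE_x\!\left[\int_0^{\tau^-_{b^*}} e^{-qt}f'(X_t)\,\diff t\right]-C\,\bE_x[e^{-q\tau^-_{b^*}}],
\]
and differentiate once more in $x$. Using translation invariance (so that under $\bP$, $X_t$ is replaced by $X_t+x$ and $\tau^-_{b^*}$ becomes $\tau^-_{b^*-x}$), I would repeat the pathwise difference-quotient analysis of Lemmas~\ref{Lem202}--\ref{Lem203}: split over the crossing-time intervals induced by an $\varepsilon$-perturbation of the starting point, take ratios with $\varepsilon$, and apply dominated convergence. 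The new input is Assumption~\ref{assump_C_line} (polynomial growth of $f''$), which combined with Assumption~\ref{Ass101}(ii) supplies integrable dominators exactly as in Remark~\ref{Rem201}. This should yield a formula of the form
\[
v_{b^*}''(x)=\bE_x\!\left[\int_0^{\tau^-_{b^*}} e^{-qt}f''(X_t)\,\diff t\right]+ \bigl(\text{a term from }\tfrac{d}{dx}\bE_x[e^{-q\tau^-_{b^*}}]\bigr),\qquad x>b^*,
\]
with continuity on $(b^*,\infty)$ following from dominated convergence, the continuity of $f''$, and the continuity of $\tau^-$ in the starting point (Lemma~\ref{Rem201A}(ii)); the polynomial growth of $v_{b^*}''$ on $(b^*,\infty)$ is obtained as in Remark~\ref{Rem201}.

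To match at $b^*$, I would use that as $x\downarrow b^*$, translation invariance gives $\tau^-_{b^*-x}\downarrow \tau^-_0$, and by Lemma~\ref{Rem201A}(i) together with the unbounded variation of $X$ we have $\tau^-_0=0$ a.s. Hence both terms in the displayed formula vanish in the limit, so $v_{b^*}''(b^*+)=0=v_{b^*}''(b^*-)$, completing the proof that $v_{b^*}\in C^2(\R)$ (and hence $C^2_{\mathrm{poly}}$ by the growth estimate above combined with Lemma~\ref{Rem202}).

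The main obstacle will be rigorously differentiating the Laplace transform $\bE_x[e^{-q\tau^-_{b^*}}]$ in $x$: this is a first-passage functional and is not directly accessible by the pathwise splitting that handles the running-cost integral. One natural route is to apply the strong Markov property at $\tau^-_{b^*}$ together with another appeal to the regularity of $0$ for $\R\setminus\{0\}$ in the unbounded variation case to control the requisite limits; an alternative is to bypass this derivative entirely and deduce the $C^2$ regularity from the integro-differential equation $(\cL-q)v_{b^*}+f=0$ satisfied by $v_{b^*}$ on $(b^*,\infty)$, using that $v_{b^*}\in C^1$ (Lemma~\ref{Lem204}) and that the nonlocal integral terms are continuous under the polynomial growth hypothesis.
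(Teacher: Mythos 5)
Your proposed route differs from the paper's and runs into a genuine obstacle that you correctly flag but do not resolve. You start from Lemma~\ref{Lem203}'s split representation $v_{b^*}'(x)=\bE_x\big[\int_0^{\tau^-_{b^*}}e^{-qt}f'(X_t)\,\diff t\big]-C\,\bE_x[e^{-q\tau^-_{b^*}}]$ and try to differentiate both terms. Two problems arise. First, the Laplace transform $x\mapsto\bE_x[e^{-q\tau^-_{b^*}}]$ is not accessible to the pathwise difference-quotient technique, as you note. Second, even the first term is not as clean as you suggest: when you translate the start point by $\varepsilon$ and split at $\tau^{(x)}_{b^*}$ and $\tau^{(x+\varepsilon)}_{b^*}$, the interval $[\tau^{(x)}_{b^*},\tau^{(x+\varepsilon)}_{b^*})$ contributes an integral of $f'$ (not of a difference of $f'$), which does not vanish when divided by $\varepsilon$; so your displayed formula for $v_{b^*}''$ would acquire an additional boundary-type term proportional to $f'(b^*)$ in addition to "a term from $\tfrac{d}{dx}\bE_x[e^{-q\tau^-_{b^*}}]$." Showing that these two extra pieces combine (they must, by the paper's clean result) is exactly the hard part you are trying to avoid.

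The paper sidesteps both issues at once by differentiating the representation from Lemma~\ref{Lem204}, namely $v_{b^*}'(x)=\bE_x\big[\int_0^{\infty}e^{-qt}f'(U^{b^*}_t)\,\diff t\big]$. This identity is obtained by applying the strong Markov property at $\tau^-_{b^*}$ and using the defining equation $\rho(b^*)+C=0$ (i.e.\ $\bE_{b^*}\big[\int_0^\infty e^{-qt}f'(U^{b^*}_t)\,\diff t\big]=-C$), which folds the $-C\,\bE_x[e^{-q\tau^-_{b^*}}]$ term into the reflected-process integral beyond $\tau^-_{b^*}$. With that form, the difference-quotient argument of Lemma~\ref{Lem203} carries over verbatim: on $[\tau^{(x+\varepsilon)}_{b^*},\infty)$ the two controlled processes coincide by \eqref{18}, the contribution over $[\tau^{(x)}_{b^*},\tau^{(x+\varepsilon)}_{b^*})$ is genuinely a difference and vanishes in the limit, and only $\bE_x\big[\int_0^{\tau^-_{b^*}}e^{-qt}f''(X_t)\,\diff t\big]$ survives. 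Your second alternative---reading off $C^2$ regularity from $(\cL-q)v_{b^*}+f=0$---is circular here, since the paper establishes that equation (Lemma~\ref{Lem205}) precisely \emph{after}, and using, the $C^2$ smoothness of Lemma~\ref{Lem205a}; for a general L\'evy generator no off-the-shelf elliptic regularity would let you break the cycle. Your matching argument at $b^*$ via $\tau^-_0=0$ a.s.\ under unbounded variation is fine and is what the paper uses, but the second-derivative formula it is applied to is the one requiring the missing ingredient above.
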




%


{The rest of this subsection is devoted to the proof of Theorem \ref{theorem_main_2} under Assumption \ref{assump_C_line}. This will be relaxed in the next subsection.}





Let $\cL$ be 
the infinitesimal generator associated with the process $X$ applied to $g \in C^{1}_{\text{poly}}$ (resp., $C^{2}_{\text{poly}} $) for the case in which $X$ is of bounded (resp., unbounded) variation with 
\begin{align}
\cL  {g} (x) := \gamma g^\prime (x)+\frac{1}{2}\sigma^2g^{\prime \prime}(x)+
\int_{\bR\backslash \{0\}}(g(x+z)-g(x)-g^\prime(x)z1_{\{\absol{z}<1\}})\Pi(\diff z),~x\in \bR . \label{212b}
\end{align}
Also, we write $(\cL-q)  {g} (x) := \cL  {g} (x) - q g(x)$, and  for any c\`adl\`ag process $Y$, let $\Delta Y_t := Y_t - Y_{t-}$, $t \geq 0$.

\begin{proposition}[verification lemma]
\label{Prop201}
Suppose that $X$ has bounded (resp., unbounded) variation paths, and let  $w$ be a function 
on $\bR$ belonging to $C^{1}_{\text{poly}}$ (resp., $C^{2}_{\text{poly}}$) and satisfying, for $x \in \bR$, 
\begin{align}
(\cL -q) w (x)+f(x) \geq 0,      \quad  &\label{201}\\
w^\prime (x)+C \geq 0.\quad& \label{202}
\end{align}
Then we have $w(x)\leq v (x)$ for all $x\in\bR$. 
\end{proposition}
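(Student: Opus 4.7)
The plan is to run a standard verification argument via It\^o's formula. Fix $x \in \mathbb{R}$ and take an arbitrary $\pi \in \mathcal{A}$ with $U^\pi_0 = x$. First, I would apply the It\^o formula for semimartingales to $e^{-qt} w(U^\pi_t)$. Because $U^\pi = X + R^\pi$ where $X$ is a L\'evy process and $R^\pi$ is an $\mathbb{F}$-adapted increasing process, and because $w \in C^1_{\text{poly}}$ in the bounded variation case and $w \in C^2_{\text{poly}}$ in the unbounded variation case, It\^o's formula is applicable and yields the decomposition
\begin{align}
e^{-qt} w(U^\pi_t) - w(x) &= \int_0^t e^{-qs}(\mathcal{L}-q) w(U^\pi_{s-}) \diff s + M_t^\pi \\
&\quad + \int_{[0,t]} e^{-qs} w^\prime(U^\pi_{s-}) \diff R^{\pi,c}_s + \sum_{0 < s \leq t,\, \Delta R^\pi_s > 0} e^{-qs}\big(w(U^\pi_s) - w(U^\pi_{s-} + \Delta X_s)\big),
\end{align}
where $R^{\pi,c}$ is the continuous part of $R^\pi$ and $M^\pi$ is a local martingale arising from the compensated Brownian and jump-measure parts.

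Next, I would apply the two hypotheses. By \eqref{201}, $(\mathcal{L}-q)w(U^\pi_{s-}) \geq -f(U^\pi_{s-})$. For the contribution from $R^\pi$, I would use \eqref{202}: for the continuous part, $w^\prime(U^\pi_{s-}) \geq -C$, and for a jump of size $\Delta R^\pi_s > 0$ occurring at time $s$ (using that jumps of $X$ and $R^\pi$ can be treated additively, so that the sum telescopes in $R^\pi$ alone after separating the pure-$X$ jumps already absorbed into $\mathcal{L}$),
\begin{align}
w(U^\pi_{s-} + \Delta X_s + \Delta R^\pi_s) - w(U^\pi_{s-} + \Delta X_s) = \int_0^{\Delta R^\pi_s} w^\prime(U^\pi_{s-} + \Delta X_s + u)\diff u \geq -C \Delta R^\pi_s.
\end{align}
Combining these pieces yields, pathwise,
\begin{align}
e^{-qt} w(U^\pi_t) - w(x) \geq -\int_0^t e^{-qs} f(U^\pi_{s-}) \diff s - C \int_{[0,t]} e^{-qs} \diff R^\pi_s + M_t^\pi.
\end{align}

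Then I would localize along a sequence of stopping times $T_n \uparrow \infty$ that reduces $M^\pi$ to a true martingale (standard choice: first exit of $U^\pi$ from $(-n,n)$ combined with $T_n = n$), take expectation under $\mathbb{P}_x$, and rearrange to obtain
\begin{align}
w(x) \leq \mathbb{E}_x\!\left[ e^{-qT_n}w(U^\pi_{T_n})\right] + \mathbb{E}_x\!\left[\int_0^{T_n} e^{-qs} f(U^\pi_s)\diff s + C \int_{[0,T_n]} e^{-qs}\diff R^\pi_s\right].
\end{align}

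Finally I would send $n \to \infty$. The two integrals converge to the corresponding quantities defining $v_\pi(x)$ by dominated/monotone convergence, using the admissibility integrability \eqref{2}. The main technical obstacle, and the step I would treat most carefully, is showing $\mathbb{E}_x[e^{-qT_n} w(U^\pi_{T_n})] \to 0$. Since $w$ has at most polynomial growth and $U^\pi_t \leq X_t + R^\pi_t$ with $R^\pi$ nondecreasing, the exponential moment bound of Assumption \ref{Ass101}(ii) (which controls $\mathbb{E}[e^{\bar\theta|X_t|}]$ and hence polynomial moments of $\sup_{s \leq t}|X_s|$), together with the bound $\int_{[0,\infty)} e^{-qs} \diff R^\pi_s < \infty$ given by \eqref{2} and integration by parts $qe^{-qt}R^\pi_t \leq q\int_{[0,t]} e^{-qs}\diff R^\pi_s + (\text{bdy})$, lets me dominate $e^{-qT_n}|w(U^\pi_{T_n})|$ by an integrable random variable and apply dominated convergence. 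Concluding, $w(x) \leq v_\pi(x)$, and taking the infimum over $\pi \in \mathcal{A}$ gives $w(x) \leq v(x)$.
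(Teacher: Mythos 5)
Your proposal takes essentially the same It\^o-formula-based verification argument as the paper: same decomposition into the $(\mathcal{L}-q)w$ drift term, local martingale, continuous $R^{\pi,c}$ contribution, and jump sum, with the same use of \eqref{201} and \eqref{202} to bound each piece, followed by localization and a transversality limit. The only cosmetic difference is that the paper delegates the limit $\mathbb{E}_x[e^{-q(t\wedge T_n)}w(U^\pi_{t\wedge T_n})]\to 0$ to the proof of Theorem~2 in \cite{Yam2017}, whereas you sketch that step directly from the polynomial growth of $w$, the exponential moment bound in Assumption~\ref{Ass101}(ii), and the integrability \eqref{2} --- which are in fact the same ingredients used in the cited reference.
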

\begin{proof}
Let $\pi \in \cA$ be any admissible strategy {that satisfies \eqref{2}}. 
By an application of 
the It\^o formula (see, e.g., \cite[Theorem II.31 or II.32]{Pro2005}), 
with $R^{\pi, c}_t$ the continuous part of $R^\pi_t$ such that
$U^{\pi}_t = X_t +R^{\pi, c}_t + \sum_{0\leq s\leq t}\Delta R^{\pi}_s,$ $t\geq 0$, 
we have 
\begin{align*}
e^{-qt}w(U^\pi_t)-w(U^\pi_{0-})
&=\int_0^t e^{-qs}(\cL - q)w(U^\pi_{s-})\diff s
+M_t\\
&+\int_0^t e^{-qs}w^\prime (U^\pi_{s-}) \diff R^{\pi,c}_s  
{+\sum_{0\leq s\leq t} e^{-qs}\rbra{w(U^\pi_{s-} + \Delta X_s + \Delta R_s^\pi) - w(U^\pi_{s-} + \Delta X_s)}}, 
\end{align*}
where $\{M_t : t\geq 0\}$ is a local martingale satisfying 
\begin{align*}
M_t&:=\sigma \int_0^t e^{-qs} w^\prime (U^\pi_{s-}) \diff B_s
+\int_{[0,t ] \times (\bR {\backslash \{0\})}}e^{-qs}
\rbra{ w(U^\pi_{s-}+y) - w(U^\pi_{s-}) 
}(\cN (\diff s \times \diff y ) -\diff s \times\Pi(\diff y)),
\end{align*}
where $B$ is a standard Brownian motion and $\cN$ is a Poisson random measure {associated with the jumps of $X$} in the measure space 
$([0, \infty) \times\bR ,\cB [0, \infty ) \times \cB (\bR {\backslash \{0\}}), \diff s \times \Pi( \diff x)) $. 
By \eqref{202}, we have 
\begin{align}
{\int_0^t e^{-qs}w^\prime (U^\pi_{s-}) \diff R^{\pi,c}_s}
&\geq - C\int_0^t e^{-qs} \diff R^{\pi,c}_s,  \\ 
{\sum_{0\leq s\leq t} e^{-qs}\rbra{w(U^\pi_{s-} + \Delta X_s + \Delta R_s^\pi) - w(U^\pi_{s-} + \Delta X_s)}}
&\geq -C \sum_{0\leq s\leq t} e^{-qs} \Delta R^\pi_s,
\end{align}
and {together with} \eqref{201}, we have 
\begin{align}
e^{-qt}w(U^\pi_t)-w(U^\pi_{0-})
&\geq -\int_0^t e^{-qs}f(U^\pi_{s-})\diff s
+M_t-C\int_{[0, t]} e^{-qs} \diff R^{\pi}_s. 
\end{align}
Because $M$ is a local martingale, we can take a localizing sequence of stopping times ${\{T_n\}}_{n\in\bN}$  for $M$ with $T_n \uparrow \infty$ almost surely. 
Then, taking expectations, we have 
\begin{align}
w(x) &\leq\bE_x\sbra{ \int_0^{t \land T_n} e^{-qs}f(U^\pi_{s-})\diff s
+C\int_{[0, {t \land T_n}]} e^{-qs} \diff R^{\pi}_s} +\bE_x \sbra{e^{-q{(t\land T_n)}} w(U^{\pi}_{t\land T_n})}.
\end{align}
From the proof of \cite[Theorem 2]{Yam2017}, we have $\lim_{t\uparrow\infty,n\uparrow\infty}\bE_x \sbra{e^{-q{(t\land T_n)}} w(U^{\pi}_{t\land T_n})}=0$. 
By taking the  limit as $t\uparrow \infty$ and $n \uparrow \infty$ {and the dominated convergence theorem thanks to \eqref{2}}, the proof is complete. 
\end{proof}
{We now} prove that the candidate value function $v_{b^*}$ 
satisfies the conditions {\eqref{201} and \eqref{202}} in Proposition \ref{Prop201} {(the opposite inequality {$v(x) \leq v_{b^*}(x)$} holds because $\pi^{b^*}$ is admissible as in Lemma \ref{Thm201}).} 
The latter condition is immediate as follows. 

{
\begin{remark}By Lemma \ref{Lem204},
$v_{b^*}$ satisfies \eqref{202}. 
\end{remark}
}
\par
{In view of Proposition \ref{Prop201}, we are now left to show that $v_{b^\ast}$ satisfies \eqref{201}.}


\begin{lemma}\label{Lem205}
For $x \in [b^\ast, \infty)$, we have 
\begin{align}
(\cL -q) v_{b^\ast}(x)+f(x)=0. \label{25}
\end{align}
\end{lemma}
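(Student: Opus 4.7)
The plan is to combine the dynamic programming identity satisfied by $v_{b^\ast}$ with It\^o's formula applied to $e^{-qt} v_{b^\ast}(X_t)$, leveraging the regularity established in Lemmas \ref{Lem204} and \ref{Lem205a}. Fix $x > b^\ast$. Since $X$ is c\`adl\`ag with $X_0 = x > b^\ast$ under $\bP_x$, the stopping time $\tau^-_{b^\ast}$ is strictly positive $\bP_x$-a.s. For $h > 0$, I set $T_h := h \wedge \tau^-_{b^\ast}$. On $[0, T_h)$ the reflection process $R^{b^\ast}$ is identically zero and $U^{b^\ast}_t = X_t$, so by the strong Markov property of the reflected process at $T_h$,
\begin{align*}
v_{b^\ast}(x) = \bE_x\left[\int_0^{T_h} e^{-qt} f(X_t)\diff t + e^{-qT_h} v_{b^\ast}(X_{T_h})\right].
\end{align*}

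Next, I would apply It\^o's formula to $e^{-qt}v_{b^\ast}(X_t)$, using that $v_{b^\ast} \in C^1_{\text{poly}}$ in the bounded variation case (Lemma \ref{Lem204}) and $v_{b^\ast} \in C^2_{\text{poly}}$ in the unbounded variation case (Lemma \ref{Lem205a}). After a standard localization and taking expectations---justified as in the proof of Proposition \ref{Prop201} and \cite[Theorem 2]{Yam2017} via the polynomial growth of $v_{b^\ast}$ (Lemma \ref{Rem202}) and Assumption \ref{Ass101}(ii)---the local martingale term vanishes, and comparing with the identity above yields
\begin{align*}
0 = \bE_x\left[\int_0^{T_h} e^{-qt}\bigl(f(X_t) + (\cL - q) v_{b^\ast}(X_t)\bigr)\diff t\right].
\end{align*}
Dividing by $h$ and letting $h \downarrow 0$, and noting that $T_h = h$ eventually $\bP_x$-a.s., the fundamental theorem of calculus together with dominated convergence gives $f(x) + (\cL - q)v_{b^\ast}(x) = 0$ for every $x > b^\ast$; the key continuity ingredient is that $y \mapsto (\cL - q)v_{b^\ast}(y)$ is continuous at $y = x$, which follows from dominated convergence applied to the L\'evy-type integral in \eqref{212b} using the polynomial growth of $v_{b^\ast}$ and the exponential moment condition on $\Pi$.

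The boundary point $x = b^\ast$ then follows from the joint continuity in $x$ of both $f$ (which is convex, hence continuous) and $(\cL - q)v_{b^\ast}$, established by the same dominated convergence argument. The main obstacle I anticipate is showing that the local martingale from It\^o's formula indeed has vanishing expectation after localization, which requires careful use of the polynomial growth bounds together with the exponential moment assumption; however, this is essentially parallel to the argument already carried out in the proof of Proposition \ref{Prop201}, so the adaptation should be routine.
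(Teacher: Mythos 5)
Your proposal is correct and takes essentially the same route as the paper: the paper also establishes (via the strong Markov property, building the process up as $M^{[4]}$ from three auxiliary martingales) that $e^{-q(t\wedge\tau^-_{b^\ast})}v_{b^\ast}(X_{t\wedge\tau^-_{b^\ast}})+\int_0^{t\wedge\tau^-_{b^\ast}}e^{-qs}f(X_s)\,\diff s$ is a $\bP_x$-martingale for $x>b^\ast$, which is exactly your dynamic-programming identity, and then invokes the argument of \cite[(12)]{BifKyp2010}---an It\^o/Dynkin reduction of the martingale property to the generator equation that you carry out explicitly by dividing by $h$ and letting $h\downarrow 0$. The extension to $x=b^\ast$ by continuity of $x\mapsto(\cL-q)v_{b^\ast}(x)+f(x)$ matches the paper's final step.
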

\begin{proof}
For $x \in \bR$, we write 
$\varphi_{b^\ast}(x) := \bE_x [e^{-q\tau^-_{b^\ast}}]$. 
For $x\in\bR$, because $U_{\tau^-_{b^\ast}}^{b^\ast} = b^\ast$ {on $\{ \tau^-_{b^\ast} < \infty \}$} and by the strong Markov property, 
\begin{align}
{v_{b^\ast}^{(2)}(x)} = \bE_x\left[e^{-q\tau^-_{b^\ast}}(b^\ast-X_{\tau^-_{b^\ast}}) {1_{\{\tau_{b^*}^- < \infty \}}}  \right] + \varphi_{b^\ast}(x)
{v_{b^\ast}^{(2)}(b^\ast)} . \label{v_b_1_mart}
\end{align}
For $x \in (b^\ast, \infty)$, the process $\{M^{[1]}_t  :t\geq 0 \}$ where 
$M^{[1]}_t :=e^{-q(\tau^-_{b^\ast} \land t ) } 
\varphi_{b^\ast}(X_{\tau^-_{b^\ast} \land t })$, $t\geq 0$,
is a martingale under $\bP_x$ because 
\begin{align}
\bE_x \sbra{ e^{-q\tau^-_{b^\ast} } |\cF_t}
&=\bE_x \sbra{e^{-q\tau^-_{b^\ast} } 1_{\{\tau^-_{b^\ast} \leq t \}}+e^{-q\tau^-_{b^\ast} } 1_{\{t< \tau^-_{b^\ast}\}} |\cF_t}\\
&=e^{-q\tau^-_{b^\ast} } 1_{\{\tau^-_{b^\ast} \leq t \}}+e^{-qt}  1_{\{ t<\tau^-_{b^\ast}\}}
{\varphi_{b^\ast}}\rbra{X_t}
=
{e^{-q(\tau^-_{b^\ast} \land t ) }} 
{\varphi_{b^\ast}} (X_{\tau^-_{b^\ast} \land t }).  
\end{align}
By the same argument, 
$\{M^{[2]}_t :t\geq 0 \}$ and $\{M^{[3]}_t :t\geq 0 \}$ where, for $t \geq 0$,
\begin{align}
M^{[2]}_t &:= e^{-q(\tau^-_{b^\ast} \land t ) } \bE_{{X_{\tau^-_{b^\ast}\land t }}}\left[e^{-q\tau^-_{b^\ast}}(b^\ast-X_{\tau^-_{b^\ast}}) 1_{\{\tau_{b^*}^- < \infty \}}  \right]
,\quad & \\
M^{[3]}_t &:= \int_0^{\tau^-_{b^\ast}\land t  }e^{-qs} f(X_s ) \diff s +e^{-q(\tau^-_{b^\ast} \land t)}
{v_{b^\ast}^{(1)}(X_{\tau^-_{b^\ast}\land t })},
\end{align}
are martingales under $\bP_x$  for $x\in {( b^\ast, \infty)}$.  
{By these and \eqref{v_b_1_mart},} the process $\{ M^{[4]}_t:t\geq 0 \}$ where 
\begin{align}
M^{[4]}_t :=e^{-q(\tau^-_{b^\ast}\land t ) } v_{b^\ast}(X_{\tau^-_{b^\ast}\land t})
+\int_0^{\tau^-_{b^\ast}\land t  }e^{-qs} f(X_s ) \diff s 
, \quad t\geq 0, \label{568}
\end{align}
is a martingale under $\bP_x$.
\par
Now by the same reasoning as that of the proof of \cite[(12)]{BifKyp2010},  
we have \eqref{25} for $x\in{(b^\ast , \infty)} $.  
{This also holds for $x=b^\ast$ by the continuity of  \eqref{25}, thanks to the 
smoothness of $v_{b^*}$.} 
\end{proof}

\begin{lemma}\label{Lem403}
For $x< b^\ast$, we have 
$(\cL -q)  v_{b^\ast} (x) +f(x)\geq0$. 
\end{lemma}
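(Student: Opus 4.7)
The plan is to exploit the affine structure of $v_{b^*}$ on $(-\infty, b^*]$ and reduce the inequality to a monotonicity property for the derivative of $h(x) := (\mathcal{L}-q) v_{b^*}(x) + f(x)$. From \eqref{v_b_below} and Lemma \ref{Lem204}, on $(-\infty, b^*]$ one has $v_{b^*}(x) = v_{b^*}(b^*) + C(b^*-x)$ with $v_{b^*}^\prime(x) = -C$ and $v_{b^*}^{\prime\prime}(x) = 0$. Setting $g(y) := v_{b^*}^\prime(y) + C$, convexity of $v_{b^*}$ yields $g \equiv 0$ on $(-\infty, b^*]$ and $g \geq 0$ nondecreasing on $[b^*, \infty)$. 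By Lemma \ref{Lem205}, $h \equiv 0$ on $[b^*, \infty)$, and the continuity of $h$ (inherited from $v_{b^*} \in C^2$ via Lemma \ref{Lem205a}) gives $h(b^*) = 0$.

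For $x < b^*$, since $v_{b^*}$ is locally affine in a neighborhood of $x$, differentiation under the L\'evy integral is easily justified, yielding
\[
h^\prime(x) = \int_{z > b^*-x} g(x+z)\,\Pi(\diff z) + qC + f^\prime_+(x).
\]
Both summands are nondecreasing in $x$ on $(-\infty, b^*)$: the integral because its region of integration expands as $x$ increases and $g$ is nondecreasing, and $f^\prime_+(x)$ by convexity of $f$. Hence $h^\prime$ is nondecreasing on $(-\infty, b^*)$. Since $h \equiv 0$ on $[b^*, \infty)$ trivially gives $h^\prime(b^*+) = 0$, if the smooth-fit relation $h^\prime(b^*-) = 0$ holds, then monotonicity yields $h^\prime(y) \leq 0$ for every $y < b^*$, whence $h(x) = - \int_x^{b^*} h^\prime(y)\,\diff y \geq 0$ by integration.

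The smooth-fit identity reduces to checking $\int_{z>0} g(b^*+z)\,\Pi(\diff z) + qC = -f^\prime_+(b^*)$, which is obtained by differentiating the continuation identity $(\mathcal{L}-q)v_{b^*}(x) = -f(x)$ (Lemma \ref{Lem205}) from the right at $b^*$, using $v_{b^*}^{\prime\prime}(b^*) = 0$ (from $C^2$-matching of the linear left-piece with the smooth right-piece) and the continuity $f^\prime_+(b^*-) = f^\prime_+(b^*)$ implied by $f \in C^2$ under Assumption \ref{assump_C_line}.

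The main obstacle is this smooth-fit step. In the unbounded-variation case with $\sigma > 0$, matching the one-sided derivatives of $(\mathcal{L}-q)v_{b^*}$ at $b^*$ additionally requires the high-contact condition $v_{b^*}^{\prime\prime\prime}(b^*+) = 0$, which should follow from the optimality characterization $\rho(b^*) + C = 0$ of Lemma \ref{Lem402} together with the $C^2$-regularity of $v_{b^*}$. Once the smooth fit is established, the remainder is a clean monotonicity and integration argument.
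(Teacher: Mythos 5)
Your argument shares the paper's central observation — that $h := (\mathcal{L}-q)v_{b^*} + f$ (the paper calls it $g$) is convex on $(-\infty,b^*)$, equivalently $h'$ nondecreasing there, because it is an integrated family of convex functions in $x$ — but then diverges at the decisive step. You attempt to close the argument directly by establishing the boundary condition $h'(b^*-)\leq 0$ and integrating. The paper instead establishes the It\^o identity
\begin{align}
v_b(x)-v_{b^\ast}(x)=\bE_x\Big[\int_0^\infty e^{-qs}\,h(U^b_{s}) 1_{\{  U^b_{s} < b^\ast  \}} \diff s\Big]
\end{align}
and concludes by contradiction: if $h(a)<0$ for some $a<b^*$, convexity and $h(b^*)=0$ force $h<0$ on $[a,b^*)$, so the right-hand side with $b=a$ is $\leq 0$ (strictly $<0$ for starting points below $b^*$), while Theorem \ref{Thm202} forces the left-hand side to be $\geq 0$. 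This completely sidesteps any boundary derivative matching.

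The gap you flag honestly is indeed genuine, and it is deeper than you suggest. The route you sketch — differentiate the continuation identity from the right and invoke $v_{b^*}^{\prime\prime\prime}(b^*+)=0$ — requires third-order regularity that Lemma \ref{Lem205a} does not give (it only delivers $C^2$), and the claimed high-contact equality is neither needed nor expected; what is actually required is the one-sided \emph{inequality} $h'(b^*-)\leq 0$, which one should try to extract from difference quotients of $h(b^*+\delta)-h(b^*)=0$ using the sign $v_{b^*}''(b^*+\delta)\geq 0 = v_{b^*}''(b^*)$. Even this more careful version runs into trouble in the bounded-variation case: there $v_{b^*}$ is only $C^1$, and the drift contribution $\gamma_0(v_{b^*}'(b^*+\delta)+C)/\delta$ to the difference quotient has the \emph{wrong} sign whenever the effective drift $\gamma_0$ is negative, so closing the estimate needs a further quantitative comparison with $f'_-(b^*)+qC$ that your sketch does not supply. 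In short, your strategy is plausible but would require substantial additional work and case analysis that the paper's contradiction argument avoids by design.
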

\begin{proof}
We follow the proof of  \cite[Lemma 5]{AvrPalPis2007} {and modify it for our results}. \par
We write $g(x) := (\cL -q)  v_{b^\ast} (x) +f(x)$. {In particular,} for $x < b^\ast$, because $ v_{b^\ast}'(x) =  v_{b^\ast}'(b^\ast) = -C$ and $v_{b^\ast}''(x) = 0$ {(so that $v_{b^*}(x) = c_{b^*} - Cx$ with $c_{b^\ast} := Cb^\ast+v_{b^\ast}(b^\ast)$)}, we can write
\begin{align}
g(x)=  \gamma v_{b^\ast}^\prime (b^\ast)+
\int_{\bR\backslash \{0\}}(v_{b^\ast}(x+z)+Cx-c_{b^\ast}-v_{b^\ast}^\prime(b^\ast)z1_{\{\absol{z}<1\}})\Pi(\diff z)
+q(Cx-c_{b^\ast})+f(x).
\end{align}
From the convexity of $v_{b^\ast}$ {(as  in Lemma \ref{Lem204})} and $f$, {the function} $g$ is convex on $(-\infty, b^\ast)$.
\par
{Next we} fix $b<b^\ast$ and show the next identity:
\begin{align}
v_b(x)-v_{b^\ast}(x)=\bE_x\left[\int_0^\infty e^{-qs}g(U^b_{s}) {1_{\{ -\infty < U^b_{s} < b^\ast  \}}} \diff s\right], \quad x\in\bR. \label{27}
\end{align}
Because $U^b$ is a semimartingale and $v_{b^*}$ is sufficiently smooth,  
we can follow similar steps as the proof of Proposition \ref{Prop201} via the It\^o formula.
With the continuous part of $R^{b}_t$ {as $R^{b,c}_t$ so that
$R^{b}_t = R^{b,c}_t +\sum_{s\in[0,t]}\Delta R^b_s$, $t\geq 0$ and {a localizing sequence } ${\{T^b_n := \inf\{t > 0: U^b_t-b>n\}\}}_{n\in\bN}$,
}
\begin{align*}
\begin{aligned}
\bE_x \left[ e^{-q{(t\land T^b_n)}}v_{b^\ast}(U^b_{t\land T^b_n})\right]-v_{b^\ast}(x)
&=\bE_x \left[ \int_0^{t\land T^b_n} e^{-qs}(\cL - q)v_{b^\ast}(U^b_{s-})\diff s \right]
+\bE_x \left[ \int_0^{t\land T^b_n} e^{-qs}v_{b^\ast}^{\prime} (U^b_{s-}) \diff R^{b,c}_s \right] \\
& + \bE_x \Big[ \sum_{0\leq s\leq {t\land T^b_n}} e^{-qs}\rbra{ { v_{b^\ast}(U^b_{s-} + \Delta X_s + \Delta R_s^b) }
-v_{b^\ast}
(U^b_{s-}+\Delta X_s)  }\Big].
\end{aligned}
\end{align*} 
Because $v_{b^*}$ is of polynomial growth {by Lemma \ref{Rem202}}, we have $\bE_x \sbra{e^{-q{(t\land T_n^b)}} v_{b^\ast}(U^{b}_{t\land T^b_n})} {\xrightarrow{t,n \uparrow \infty}}0$ from the proof of \cite[Theorem 2]{Yam2017}. 
In addition, when $R_s^{b,c}$ increases $U_s^b = b$, and when $\Delta R_s^b > 0$ (i.e. $U_s^b = U_{s-}^b + \Delta X_s + \Delta R_s^b \neq U_{s-}^b + \Delta X_s$) we have $U_s^b =U_{s-}^b + \Delta X_s + \Delta R_s^b = b$, and hence
\begin{multline*}
\bE_x\Big[\int_0^{t\land T^b_n} e^{-qs}v_{b^\ast}^\prime  (U^b_{s-})\diff R^{b,c}_s\Big] + \bE_x \Big[ \sum_{0\leq s<t\land T^b_n} e^{-qs}\rbra{{ v_{b^\ast}(U^b_{s-} + \Delta X_s + \Delta R_s^b) }-v_{b^\ast}(U^b_{s-} + \Delta X_s)}\Big] \\
=\bE_x\Big[\int_0^{t\land T^b_n} e^{-qs}v_{b^\ast}^\prime  (b)\diff R^{b,c}_s\Big] + \bE_x \Big[ \sum_{0\leq s<t\land T^b_n} e^{-qs}\rbra{v_{b^\ast}(b)-v_{b^\ast}({b - \Delta R_s^b})}\Big] \\ =- C \bE_x\Big[ \int_{[0,t\land T^b_n]} e^{-qs} \diff R^{b}_s\Big],
\end{multline*}
where the last equality holds because, using the fact that $v_{b^*}^\prime(x) = - C$ for  $x \in (-\infty, b^*)$ and $b - \Delta R_s^b < b < b^*$, 
$v_{b^\ast}(b)-v_{b^\ast}(b - \Delta R_s^b) = - C \Delta R_s^b$.
Hence, {with}
\begin{align}
v_b^{t,n} (x):=\bE_x \left[ \int_0^{t\land T^b_n} e^{-qt}f(U^b_t)\diff t + C\int_{[0, t\land T^b_n]}e^{-qt} \diff R^b_t \right], \quad x\in\bR,   
\end{align}
we have, using Lemma \ref{Lem205}, 
\begin{multline*}
\bE_x \left[ e^{-q{(t\land T^b_n)}}v_{b^\ast}(U^b_{t\land T^b_n})\right]-v_{b^\ast}(x)
=\bE_x \left[ \int_0^{t\land T^b_n}  e^{-qs}(\cL - q)v_{b^\ast}(U^b_{s-})\diff s \right]
- C \bE_x\left[\int_{[0, t\land T^b_n]} e^{-qs} \diff R^{b}_s\right] \\
=\bE_x \left[ \int_0^{t\land T^b_n}  e^{-qs} g(U^b_{s})\diff s \right] - v_b^{t,n}(x)
=\bE_x\left[\int_0^{t\land T^b_n} e^{-qs}g(U^b_{s}) {1_{\{ -\infty < U^b_{s} < b^\ast  \}}} \diff s\right] - v_b^{t,n}(x). 
\end{multline*}
Because $v_{b^*}$ is of polynomial growth {by Lemma \ref{Rem202}}, we have $\bE_x \sbra{e^{-q{(t\land T_n^b)}} v_{b^\ast}(U^{b}_{t\land T^b_n})} {\xrightarrow{t,n \uparrow \infty}}0$ from the proof of \cite[Theorem 2]{Yam2017}. Since $\pi^b$ is admissible {(satisfying \eqref{2})} and using the dominated convergence theorem, we have $v_b^{t,n}(x) {\xrightarrow{t,n \uparrow \infty}}v_b(x)$. 
In addition, the function $g$ is continuous on $\bR$ {and hence finite on a finite interval,}
and thus $g(U^b_{s})1_{(-\infty , b^\ast )}  (U^b_{s})$ is bounded for $s\geq 0$. 
Therefore, using the dominated convergence theorem, we have \eqref{27}.

\par
{We are now ready to complete the proof.}
Because $g$ is convex and $g(b^*) = 0$, if there exists $a < b^*$ such that $g(a) < 0$, then necessarily $g(x) < 0$ for {all} $x \in (a, b^*)$. {Setting $b = a$ in \eqref{27}  and by Theorem \ref{Thm202},
\begin{align}
\bE_x\left[\int_0^\infty e^{-qs}g(U^a_{s}) {1_{\{ -\infty < U^a_{s} < b^\ast  \}}}  \diff s\right] = v_a(x)-v_{b^\ast}(x) \geq 0. \label{g_v_positive}
\end{align}}
On the other hand,  because $U^a_t \geq a$ a.s., $\int_0^\infty e^{-qs}g(U^a_{s}) {1_{\{ -\infty < U^a_{s} < b^\ast  \}}}   \diff s$ is {nonpositive} and also strictly negative with a positive probability when the starting point $x$ is less than $b^\ast$. {This contradicts with \eqref{g_v_positive} and the proof is complete.} 
\end{proof}

\subsection{General optimality result} \label{subsection_optimality_general}

We shall now complete the proof of Theorem \ref{theorem_main_2} by relaxing Assumption \ref{assump_C_line} {for the case $X$ is of unbounded variation}. This will be done via approximation.


For $\varepsilon>0$, we define 
\begin{align}
f^{(\varepsilon)}(x):= 
\begin{cases}
f(b^\ast)+{\int_{b^*}^x} f^{(\varepsilon)\prime}(y) \diff y, \quad& x\geq b^\ast, \\
f(b^\ast)-{\int_x^{b^*}} f^{(\varepsilon)\prime}(y) \diff y, \quad& x<b^\ast, 
\end{cases}
\end{align} 
where the derivative is given as follows:
\begin{align}
f^{(\varepsilon)\prime}(x):= \frac{1}{\varepsilon^2}\int^0_{-\varepsilon} \diff z\int^0_{-\varepsilon} f^\prime_+(x+y+z)\diff y,  \label{f_epsilon_prime}
\end{align}
{which can also be written}
\begin{align}
f^{(\varepsilon)\prime}(x)=\frac{1}{\varepsilon}\int^0_{-\varepsilon} \frac{f(x+z)-f(x+z-\varepsilon)}{\varepsilon}\diff z.\label{b008}
\end{align}

{We confirm that $f^{(\varepsilon)}(x)$ satisfies Assumption \ref{Ass201} (given that $f$ satisfies it). First, the convexity (Assumption \ref{Ass201}(i)) is immediate because the derivative \eqref{f_epsilon_prime} is monotone in $x$.}

Since $f$ is convex, {the left-hand derivative} $f^\prime_-$ is nondecreasing and 
\begin{align}
\frac{f(x+z)-f(x+z-\varepsilon)}{\varepsilon}\in[f^\prime_-(x+z-\varepsilon), f^\prime_-(x+z)], \quad z\in\bR, 
\end{align}
and thus from \eqref{b008}, we have 
\begin{align}
f^{(\varepsilon)\prime}(x)\in[f^\prime_-(x-2\varepsilon), f^\prime_-(x)]\label{b009}
\end{align} 
and, {together with the convexity of $f$ and \eqref{f_epsilon_prime},}
\begin{align} \label{f_prime_epsilon_monotone}
f^{(\varepsilon)\prime}(x) \uparrow f^\prime_{-}(x) \quad \textrm{as } \varepsilon\downarrow0.
\end{align} Therefore, {by monotone convergence}, as $\varepsilon \downarrow0$,
\begin{align}
f^{(\varepsilon)}(x)
\begin{cases}
\uparrow f(x), \quad &x>b^\ast, \\
\downarrow f(x), \quad& x< b^\ast.
\end{cases}
\label{b001}
\end{align}

From \eqref{b009}, we have {the following bounds:}
\begin{align} \label{f_tilde_bound}
f^{(\varepsilon)}(x)\in
\begin{cases}
[f(b^\ast)+f(x-2\varepsilon)-f(b^\ast-2\varepsilon), f(x)], \quad &x>b^\ast, \\
[f(x), f(b^\ast)+f(x-2\varepsilon)-f(b^\ast-2\varepsilon)], \quad &x\leq b^\ast.
\end{cases}
\end{align}
{By this bound and  Assumption \ref{Ass201}(ii) for $f$,  Assumption \ref{Ass201}(ii)  also holds for $f^{(\varepsilon)}$.
Finally, by \eqref{b009} $f^{(\varepsilon)}$ fulfills Assumption \ref{Ass201}(iii). }

{In order to show that the result established in the previous subsection holds for $f^{(\varepsilon)}$, we show the following.}

\begin{lemma} 
For each $\varepsilon > 0$, $f^{(\varepsilon)}$ satisfies Assumption \ref{assump_C_line}.
\end{lemma}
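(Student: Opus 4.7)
The plan is to compute $f^{(\varepsilon)\prime\prime}$ explicitly and then observe that the resulting expression is continuous and polynomially bounded. The key point is that the double integral in \eqref{f_epsilon_prime} collapses to a finite difference once $f^\prime_+$ is integrated out. First I would substitute $u = x + y + z$ in the inner integral and use that, since $f$ is convex, one has $\int_a^b f^\prime_+(u)\diff u = f(b) - f(a)$ for any $a < b$, which recovers the alternative expression \eqref{b008}:
\begin{equation*}
f^{(\varepsilon)\prime}(x) = \frac{1}{\varepsilon^2} \int_{-\varepsilon}^{0} \bigl[f(x+z) - f(x+z-\varepsilon)\bigr]\diff z = \frac{1}{\varepsilon^2} \int_{x-\varepsilon}^{x} \bigl[f(w) - f(w-\varepsilon)\bigr]\diff w,
\end{equation*}
where the second equality uses the change of variables $w = x + z$.

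Next, since $f$ is convex and hence continuous on $\mathbb{R}$, the integrand $w \mapsto f(w) - f(w-\varepsilon)$ is continuous, so the fundamental theorem of calculus applied to the right-hand form above yields
\begin{equation*}
f^{(\varepsilon)\prime\prime}(x) = \frac{1}{\varepsilon^2}\bigl[f(x) - 2f(x-\varepsilon) + f(x-2\varepsilon)\bigr],
\end{equation*}
which is manifestly continuous in $x$. Combined with the fact that $f^{(\varepsilon)} \in C^1(\mathbb{R})$ (immediate from its definition as the integral of the continuous function $f^{(\varepsilon)\prime}$), this establishes $f^{(\varepsilon)} \in C^2(\mathbb{R})$, the first half of Assumption \ref{assump_C_line}.

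For the polynomial growth of $f^{(\varepsilon)\prime\prime}$, I would invoke Assumption \ref{Ass201}(ii) directly: the bound $|f(y)| \leq k_1 + k_2 |y|^N$ gives
\begin{equation*}
|f^{(\varepsilon)\prime\prime}(x)| \leq \frac{1}{\varepsilon^2}\bigl(4 k_1 + k_2(|x|^N + 2|x-\varepsilon|^N + |x-2\varepsilon|^N)\bigr),
\end{equation*}
which is polynomial in $|x|$ of the same order $N$. This completes the verification of Assumption \ref{assump_C_line}. There is no real obstacle here; the argument is a routine two-step mollification computation, and the specific double-smoothing chosen in \eqref{f_epsilon_prime} is precisely what produces a $C^2$ regularization while preserving the polynomial growth of $f$.
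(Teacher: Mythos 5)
Your proof is correct, and the final formula $f^{(\varepsilon)\prime\prime}(x) = \varepsilon^{-2}\bigl(f(x)-2f(x-\varepsilon)+f(x-2\varepsilon)\bigr)$ matches the paper's. The route, however, is a bit different. The paper works directly from the representation \eqref{b008}: it forms the one-sided difference quotients $\bigl(f^{(\varepsilon)\prime}(x\pm h)-f^{(\varepsilon)\prime}(x)\bigr)/(\pm h)$, passes the limit inside the integral by dominated convergence, obtains $\varepsilon^{-2}\int_{-\varepsilon}^0\bigl(f'_\pm(x+z)-f'_\pm(x+z-\varepsilon)\bigr)\diff z$ on each side, and then integrates $f'_\pm$ to land on the finite-difference formula; it needs to check separately that the left- and right-hand limits agree. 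You instead substitute $w=x+z$ to write $f^{(\varepsilon)\prime}(x)=\varepsilon^{-2}\int_{x-\varepsilon}^{x}\bigl(f(w)-f(w-\varepsilon)\bigr)\diff w$ and differentiate this moving-window integral by the fundamental theorem of calculus with a continuous integrand. This is cleaner: FTC immediately yields a genuine two-sided derivative, so there is no need to treat $f'_+$ and $f'_-$ separately and no dominated-convergence bound to verify. The one step you use that deserves a word is the identity $\int_a^b f'_+(u)\diff u = f(b)-f(a)$, which holds because a convex $f$ is locally Lipschitz (hence absolutely continuous) and $f'_+$ is a version of its a.e. derivative; the paper uses this same fact implicitly. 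Your handling of $C^1$ regularity (integral of the continuous $f^{(\varepsilon)\prime}$) and the polynomial-growth bound for $f^{(\varepsilon)\prime\prime}$ are both fine and match the paper's conclusion.
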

\begin{proof}
Because \eqref{f_epsilon_prime} is increasing in $x$ {by the convexity of $f$}, the function $f^{(\varepsilon)}$ is convex {as well}. Furthermore, $f^{(\varepsilon)} \in C^2(\R)$ with
\begin{align} \label{f_double_prime_epsilon}
f^{(\varepsilon)''}(x) = \frac{1}{\varepsilon^2} (f(x)-2f(x-\varepsilon)+f(x-2\varepsilon)).
\end{align}
 Indeed,
using the dominated convergence theorem {applied to \eqref{b008}}, we have 
 \begin{align}
 \lim_{h\downarrow 0}&\frac{f^{(\varepsilon)\prime}(x+h)-f^{(\varepsilon)\prime}(x)}{h}\\
  &= \lim_{h\downarrow 0}\frac{1}{\varepsilon^2}\int^0_{-\varepsilon} \left(\frac{f(x+h+z)-f(x+z)}{h}-\frac{f(x+h+z-\varepsilon)-f(x+z-\varepsilon)}{h} \right)\diff z
  \\
 &=\frac{1}{\varepsilon^2}\int^0_{-\varepsilon} (f^\prime_+(x+z)-f^\prime_+(x+z-\varepsilon))\diff z
 =\frac{1}{\varepsilon^2} \big(f(x)-2f(x-\varepsilon)+f(x-2\varepsilon) \big),\\
  \lim_{h\downarrow 0}&\frac{f^{(\varepsilon)\prime}(x)-f^{(\varepsilon)\prime}(x-h)}{h}\\
 &=\frac{1}{\varepsilon^2}\int^0_{-\varepsilon} (f^\prime_-(x+z)-f^\prime_-(x+z-\varepsilon))\diff z
 =\frac{1}{\varepsilon^2} \big(f(x)-2f(x-\varepsilon)+f(x-2\varepsilon) \big).
 \end{align}
Finally, by  {\eqref{f_double_prime_epsilon} together with } Assumption \ref{Ass201}(ii), {the second derivative } $f^{(\varepsilon)''}$  is of polynomial growth. 
\end{proof}

By this lemma, Lemma \ref{Lem205a} can be applied when $f$ is replaced with $f^{(\varepsilon)}$. 
Let $\cA^{(\varepsilon)}$ be the set of admissible strategies {when $f$ is replaced with} $f^{(\varepsilon)}$.  We define, for $\pi \in \cA^{(\varepsilon)}$, 
\begin{align}
v^{(\varepsilon)}_\pi (x):= \bE_x \left[\int_0^\infty e^{-qt} f^{(\varepsilon)}(U^\pi_t) \diff t +C \int_{[0, \infty)}e^{-qt}\diff R^\pi_t\right],\quad x\in\bR, 
\end{align}
and
\begin{align}
v^{(\varepsilon)}(x):= \inf_{\pi\in\cA^{(\varepsilon)}} v^{(\varepsilon)}_\pi(x), \quad x\in\bR. \label{v_varepsilon_def}
\end{align}
By Lemma \ref{Lem205a}, the barrier strategy with the barrier 
$b^\ast_{(\varepsilon)}:=\inf \{b\in\bR : \rho^{(\varepsilon)}(b)+C \geq  0 \}$ where
$\rho^{(\varepsilon)}(b) :=\bE_b \left[\int_0^\infty e^{-qt} f^{(\varepsilon)\prime} (U^{b}_t)\diff t\right]$
 is optimal, i.e.,
 \begin{align} \label{v_varepsilon_solved_by_barrier}
 v^{(\varepsilon)}(x) = v^{(\varepsilon)}_{b^*_{(\varepsilon)}}(x), \quad x \in \R,
 \end{align}
{where we define $v^{(\varepsilon)}_b$ analogously to \eqref{def_v_b}.}



\par
We shall now approximate the value function for the original cost function $f$ with the cases with $f^{(\varepsilon)}$ {to show that the original case is solved by a barrier strategy.} 

\begin{lemma} \label{lemma_admissible_set_epsilon}We have
$\cA \subset \cA^{(\varepsilon)}$ for all $\varepsilon>0$.
\end{lemma}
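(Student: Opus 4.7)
Since $\cA$ and $\cA^{(\varepsilon)}$ impose identical structural conditions on $\pi$ (nondecreasing, right-continuous, $\mathbb{F}$-adapted, $R^\pi_{0-}=0$) and share the same control-cost integrability $\bE_x\big[\int_{[0,\infty)} e^{-qt}\,\diff R^\pi_t\big]<\infty$, it is enough to verify, for any $\pi \in \cA$, that
\[\bE_x\Big[\int_0^\infty e^{-qt}\absol{f^{(\varepsilon)}(U^\pi_t)}\,\diff t\Big]<\infty, \qquad x \in \bR.\]

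The plan is to establish a pointwise domination of the form
\[\absol{f^{(\varepsilon)}(y)} \leq C_\varepsilon \bigl(1 + \absol{f(y)}\bigr), \qquad y \in \bR,\]
for some constant $C_\varepsilon>0$ depending only on $\varepsilon$ and $f$. Starting from the already-derived bound \eqref{f_tilde_bound}, one has $\absol{f^{(\varepsilon)}(y)} \leq \absol{f(y)} + \absol{f(y-2\varepsilon)} + \absol{f(b^\ast)} + \absol{f(b^\ast - 2\varepsilon)}$, so the task reduces to showing $\absol{f(y-2\varepsilon)} \leq C(1 + \absol{f(y)})$ uniformly in $y$. On any bounded $y$-set this is immediate from continuity of $f$; for $\absol{y}$ large, convexity of $f$ combined with the polynomial-growth hypothesis in Assumption \ref{Ass201}(ii) ensures that $f$ and its shift $f(\cdot-2\varepsilon)$ share the same asymptotic growth, so their ratio is bounded, while Assumption \ref{Ass201}(iii) guarantees that $\absol{f}$ itself grows at infinity in at least one direction, preventing the denominator from degenerating.

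Given this pointwise bound, the conclusion follows at once from
\[\bE_x\Big[\int_0^\infty e^{-qt}\absol{f^{(\varepsilon)}(U^\pi_t)}\,\diff t\Big] \leq \tfrac{C_\varepsilon}{q} + C_\varepsilon\, \bE_x\Big[\int_0^\infty e^{-qt}\absol{f(U^\pi_t)}\,\diff t\Big] < \infty,\]
where the right-hand side is finite by admissibility of $\pi$ in $\cA$. The main obstacle I anticipate will be the uniform pointwise domination step near the minimum of $f$, where $\absol{f(y)}$ can vanish and a naive bound on the ratio $\absol{f(y-2\varepsilon)}/\absol{f(y)}$ fails; the additive $1$ in $(1+\absol{f(y)})$ absorbs this local oscillation, but the matching of asymptotic growth rates between $f$ and its shift must be handled cleanly across the transition from the bounded to the asymptotic regime.
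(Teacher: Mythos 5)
Your argument hinges on the pointwise domination $\absol{f(y-2\varepsilon)} \leq C_\varepsilon(1+\absol{f(y)})$, and the justification offered for it — ``convexity of $f$ combined with polynomial growth ensures $f$ and its shift share the same asymptotic growth, so their ratio is bounded'' — is not valid. Convexity constrains $f'_+$ to be nondecreasing but does not prevent $f'_+$ from making large jumps at sparse locations where the accumulated value $f$ is still small; polynomial growth of $f$ on the whole line does not translate into a bound on the ratio $f(y-2\varepsilon)/f(y)$. Concretely, take $N=3$ and build a convex, nondecreasing $g$ on $[0,\infty)$ that is piecewise linear with slope $b_k^{2}$ on $[b_k,b_{k+1})$, where $b_{k+1}=b_k^3$. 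Then $g$ satisfies $g(z)\leq C(1+z^3)$, yet $g(b_k)\sim b_k^{5/3}$ while $g(b_k+1)-g(b_k)=b_k^2$, so $g(b_k+1)/g(b_k)\sim b_k^{1/3}\to\infty$. Setting $f(y)=g(-y)$ for $y\leq 0$ (extended linearly and convexly for $y>0$) yields an $f$ meeting all of Assumption \ref{Ass201} for suitable $C$, but for which $\sup_y\absol{f(y-2\varepsilon)}/(1+\absol{f(y)})=\infty$. Hence the first inequality in your final display fails, and the proof does not go through.

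The paper's proof gets around precisely this obstruction by splitting the state space at $b^*$ rather than seeking a global pointwise comparison. On $\{U^\pi_t>b^*\}$ the estimate \eqref{f_tilde_bound} directly gives $f^{(\varepsilon)}(x)\leq f(x)$, and both functions have the same behaviour at $+\infty$, so $\absol{f^{(\varepsilon)}}\leq\absol{f}+K$ there and admissibility of $\pi$ suffices. On $\{U^\pi_t<b^*\}$ — exactly the region where the ratio $\absol{f^{(\varepsilon)}(y)}/\absol{f(y)}$ can blow up — the paper uses that $R^\pi$ is nondecreasing, so $U^\pi_t\geq X_t$, and by convexity $\absol{f^{(\varepsilon)}(U^\pi_t)}\mathbf{1}_{\{U^\pi_t<b^*\}}$ is controlled by $\absol{f^{(\varepsilon)}(X_t)}\mathbf{1}_{\{X_t<b^*\}}$ plus a constant. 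The resulting expectation over the \emph{uncontrolled} process $X$ is finite by Remark \ref{Rem201} together with the polynomial growth of $f^{(\varepsilon)}$, independently of $\pi$. This decomposition — not a pointwise bound of $f^{(\varepsilon)}$ by $f$ — is the essential idea, and it is what your proposal is missing.
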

\begin{proof}
Fix $\pi \in \mathcal{A}$ so that, for $x \in \R$,
\begin{align} \label{strategy_set_0}
\bE_x \left[ \int_0^\infty e^{-qt}|f(U^\pi_t)| \diff t \right] &< \infty, \\
\bE_x \left[ \int_{[0, \infty)}e^{-qt} \diff R^\pi_t \right]&< \infty. \label{2_again}
\end{align}




When  $\lim_{y\to\infty}f(y)=\infty$, by \eqref{f_tilde_bound} we also have  $\lim_{y\to\infty}f^{(\varepsilon)}(y)=\infty$ and because $f^{(\varepsilon)}(x) \leq f(x)$ for $x > b^\ast$, we must have  $|f^{(\varepsilon)}(x)| \leq |f(x)|$ for large $x$.

 When $\lim_{y\to\infty}f(y)<\infty$, then necessarily $f$ is nonincreasing by the convexity and by \eqref{f_tilde_bound}, $f^{(\varepsilon)}(x) \geq f(b^\ast)+ f(x-2\varepsilon)-f(b^\ast-2\varepsilon) \geq f(x)+f(b^\ast)-f(b^\ast-2\varepsilon)$ for all $x > {b^*}$ and thus we have $|f^{(\varepsilon)}(x)|\leq |f(x)|+|f(b^\ast)-f(b^\ast-2\varepsilon)|$ for large enough $x$. 
%
These bounds and \eqref{strategy_set_0} show 
\begin{align}
\bE_x \left[ \int_0^\infty e^{-qt}|f^{(\varepsilon)}(U^\pi_t)| 1_{(b^\ast,\infty)} (U^\pi_t) \diff t \right] < \infty. 
\end{align}

%
{On the other hand, } since $X_t\leq U^\pi_t$ for  $t\geq 0$,
\begin{align}
\bE_x& \left[ \int_0^\infty e^{-qt}|f^{(\varepsilon)}(U^\pi_t)|1_{(-\infty, b^\ast)}(U^\pi_t) \diff t\right]\\
&\leq 
\begin{cases}
\bE_x \left[ \int_0^\infty e^{-qt}(|f^{(\varepsilon)}(X_t)|1_{(-\infty, b^\ast)}(X_t)+|\inf_{y\in(-\infty, b^\ast)}f(y)|) \diff t\right], \quad & \textrm{if } \lim_{y\to-\infty}f^{(\varepsilon)}(y)=\infty,\\
\bE_x \left[ \int_0^\infty e^{-qt}(|f^{(\varepsilon)}(X_t)|1_{(-\infty, b^\ast)}(X_t)+|f(b^\ast)|) \diff t\right], \quad & \textrm{if }  \lim_{y\to-\infty}f^{(\varepsilon)}(y)<\infty,
\end{cases}
\end{align}
which is finite by Remark \ref{Rem201} {and \eqref{f_tilde_bound}}. Combining these {and recalling \eqref{2_again}}, we have $\pi \in \mathcal{A}^{(\varepsilon)}$, as desired.
\end{proof}

\begin{lemma} \label{lemma_b_star_epsilon_conv} 
We have
$b_{(\varepsilon)}^\ast \downarrow b^\ast$ 
as $\varepsilon \downarrow 0$. 
\end{lemma}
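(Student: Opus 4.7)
The plan is to exhibit a two-sided sandwich $b^\ast \leq b^\ast_{(\varepsilon)} \leq b^\ast + 2\varepsilon$, which immediately yields convergence, and to obtain monotonicity of $\varepsilon \mapsto b^\ast_{(\varepsilon)}$ from the already established monotonicity \eqref{f_prime_epsilon_monotone}.

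First, I would rewrite $\rho^{(\varepsilon)}(b)$ in terms of $\rho$. Observe from \eqref{f_epsilon_prime} that if $Y_1, Y_2$ are i.i.d.\ uniform on $[-\varepsilon, 0]$ then
\[
f^{(\varepsilon)\prime}(x) = \frac{1}{\varepsilon^2}\int_{-\varepsilon}^{0}\!\!\int_{-\varepsilon}^{0} f'_+(x+y+z)\,\diff y\,\diff z = \mathbb{E}\bigl[f'_+(x + Y_1 + Y_2)\bigr].
\]
Using the translation identity $U^b_t \stackrel{d}{=} U^0_t + b$ under $\mathbb{P}_b$ (which underlies the second equality in \eqref{def_rho}) together with Fubini (justified by the polynomial growth of $f'_+$ and the exponential moments of $X$ from Assumption \ref{Ass101}(ii)), this gives
\[
\rho^{(\varepsilon)}(b) = \mathbb{E}\bigl[\rho(b + Y_1 + Y_2)\bigr], \qquad Y_1 + Y_2 \in [-2\varepsilon, 0].
\]

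Since $\rho$ is nondecreasing (by convexity of $f$), this representation gives the sandwich
\[
\rho(b - 2\varepsilon) \leq \rho^{(\varepsilon)}(b) \leq \rho(b).
\]
The right inequality implies $\{b : \rho^{(\varepsilon)}(b) + C \geq 0\} \subseteq \{b : \rho(b) + C \geq 0\}$, so $b^\ast_{(\varepsilon)} \geq b^\ast$. For the reverse direction, if $b \geq b^\ast + 2\varepsilon$ then $b - 2\varepsilon \geq b^\ast$, so $\rho(b-2\varepsilon) + C \geq 0$ (by the definition of $b^*$ and right-continuity of $\rho$), and hence $\rho^{(\varepsilon)}(b) + C \geq 0$ by the left inequality, giving $b^\ast_{(\varepsilon)} \leq b^\ast + 2\varepsilon$. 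Combining,
\[
b^\ast \leq b^\ast_{(\varepsilon)} \leq b^\ast + 2\varepsilon.
\]

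Finally, \eqref{f_prime_epsilon_monotone} states $f^{(\varepsilon)\prime}(x) \uparrow f'_-(x)$ as $\varepsilon \downarrow 0$, so $\rho^{(\varepsilon)}(b)$ is nondecreasing as $\varepsilon \downarrow 0$, which makes $b^\ast_{(\varepsilon)}$ nonincreasing as $\varepsilon \downarrow 0$. Together with the squeeze $b^\ast \leq b^\ast_{(\varepsilon)} \leq b^\ast + 2\varepsilon$, this yields the monotone convergence $b^\ast_{(\varepsilon)} \downarrow b^\ast$. I expect no serious obstacle here; the only technical care is verifying Fubini when interchanging the uniform averaging with the $\mathbb{E}_b \int_0^\infty e^{-qt} \cdot \,\diff t$, which follows routinely from the polynomial-growth control on $f'_+$ together with Assumption \ref{Ass101}(ii).
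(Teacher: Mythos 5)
Your proof is correct and takes a genuinely different route from the paper's. The paper establishes the lower bound $b^\ast_{(\varepsilon)}\geq b^\ast$ in the same way you do (from $\rho^{(\varepsilon)}\leq\rho$), but for the upper bound it proves the \emph{pointwise limit} $\rho^{(\varepsilon)}(b)\to\rho(b)$ as $\varepsilon\downarrow 0$, which requires replacing $f'_+$ by $f'_-$ inside the resolvent and then invoking the no-atom property of the potential measure of $U^0$ from the proof of Lemma~\ref{Lem402}(i), together with the strict inequality $\rho(b^\ast+\eta)+C>0$ from Lemma~\ref{Lem402}(ii) (both legitimate here since Section~\ref{subsection_optimality_general} is restricted to the unbounded-variation case, so $X$ is neither of bounded variation nor the negative of a subordinator). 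You instead exploit the Fubini representation $\rho^{(\varepsilon)}(b)=\mathbb{E}\,\rho(b+Y_1+Y_2)$ with $Y_1+Y_2\in[-2\varepsilon,0]$, which immediately yields the two-sided sandwich $\rho(b-2\varepsilon)\leq\rho^{(\varepsilon)}(b)\leq\rho(b)$ using only monotonicity of $\rho$, and from the right-continuity of $\rho$ at $b^\ast$ (giving $\rho(b^\ast)+C\geq 0$) you get the explicit rate $b^\ast\leq b^\ast_{(\varepsilon)}\leq b^\ast+2\varepsilon$. This buys two things: it sidesteps Lemma~\ref{Lem402} entirely (so in particular the no-atom computation is not needed, making the argument valid verbatim without the unbounded-variation restriction), and it produces a quantitative convergence rate rather than a bare limit via a $\limsup$/$\eta$ argument. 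The Fubini interchange you flag is indeed the only technical point; it is secured by the bound $|f'_+(\,\cdot+y+z)|\leq|f'_+(\,\cdot\,)|\vee|f'_+(\,\cdot-2\varepsilon)|$ for $y+z\in[-2\varepsilon,0]$ together with Lemma~\ref{Lem201}, exactly as you indicate.
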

\begin{proof}
First,  $\varepsilon \mapsto b^*_{(\varepsilon)}$ is decreasing because $\varepsilon \mapsto \rho^{(\varepsilon)}(b)$ is increasing by \eqref{f_prime_epsilon_monotone} for any $b \in \R$.

Because $\rho^{(\varepsilon)}(x)\leq \rho(x)$ {by \eqref{f_prime_epsilon_monotone},}
\begin{align}
b_{(\varepsilon)}^\ast\geq b^\ast. \label{b007}
\end{align}  
We have, for $b\in\bR$, {by \eqref{f_prime_epsilon_monotone},}
\begin{align}
\rho^{(\varepsilon)}(b) \xrightarrow{\varepsilon \downarrow 0} \bE_b\left[\int_0^\infty e^{-qt} f^\prime_- (U^b_t) \diff t\right]
=\rho(b),
\label{a004}
\end{align}
where the equality holds
because 
the potential {measure} of $U^0$ does not have a mass as proved in the proof of  Lemma \ref{Lem402}(i).

We fix $\eta>0$.  By the definition of $b^*$ and by Lemma \ref{Lem402}(ii), we have $\rho(b^\ast+\eta)+C>0$.  
From \eqref{a004}, {
 we can take small $\varepsilon > 0$ such that 
$\rho^{(\varepsilon)}(b^*+\eta)+C
>0$. 
Moreover,  since $b \mapsto \rho^{(\varepsilon)}(b)$ is nondecreasing, 
$b_{(\varepsilon)}^\ast \leq b^\ast + \eta$. }
Since $\eta > 0$ is arbitrary, $\limsup_{\varepsilon \downarrow 0} b_{(\varepsilon)}^\ast  \leq b^\ast$. This together with \eqref{b007} completes the proof.
\end{proof}


\begin{lemma}  \label{lemma_b002}
We have
$v(x) \geq \limsup_{\varepsilon\downarrow0}v^{(\varepsilon)} (x)$ for $x\in\bR$.
\end{lemma}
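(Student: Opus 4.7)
The plan is to show the stronger statement that for every fixed admissible $\pi\in\mathcal{A}$,
\begin{align}
\lim_{\varepsilon\downarrow 0} v^{(\varepsilon)}_\pi(x) = v_\pi(x), \quad x\in\bR,
\end{align}
and then use Lemma \ref{lemma_admissible_set_epsilon} to conclude. Indeed, since $\mathcal{A}\subset\mathcal{A}^{(\varepsilon)}$, for every $\pi\in\mathcal{A}$ we have $v^{(\varepsilon)}(x)\leq v^{(\varepsilon)}_\pi(x)$, so $\limsup_{\varepsilon\downarrow 0}v^{(\varepsilon)}(x)\leq v_\pi(x)$; taking the infimum over $\pi\in\mathcal{A}$ gives the desired inequality.

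To establish the convergence $v^{(\varepsilon)}_\pi(x)\to v_\pi(x)$, first note that the controlling cost term $C\int_{[0,\infty)}e^{-qt}\diff R^\pi_t$ is independent of $\varepsilon$, so it suffices to control the running-cost part $\bE_x[\int_0^\infty e^{-qt}f^{(\varepsilon)}(U^\pi_t)\diff t]$. From \eqref{f_prime_epsilon_monotone} and the definition of $f^{(\varepsilon)}$ as antiderivative of $f^{(\varepsilon)\prime}$ anchored at $f^{(\varepsilon)}(b^\ast)=f(b^\ast)$, we have the monotone convergence
\begin{align}
f^{(\varepsilon)}(y)\uparrow f(y) \text{ for } y> b^\ast, \qquad f^{(\varepsilon)}(y)\downarrow f(y) \text{ for } y<b^\ast, \quad \text{as } \varepsilon\downarrow 0,
\end{align}
and $f^{(\varepsilon)}(b^\ast)=f(b^\ast)$. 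I would then split the running-cost integral along the sets $\{U^\pi_t>b^\ast\}$ and $\{U^\pi_t\leq b^\ast\}$ and handle each separately via dominated convergence.

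On $\{U^\pi_t>b^\ast\}$ the integrand converges up to $f(U^\pi_t)\mathbf{1}_{\{U^\pi_t>b^\ast\}}$, and the bound $f^{(\varepsilon)}(U^\pi_t)\leq f(U^\pi_t)$ coming from \eqref{f_tilde_bound} together with $f^{(\varepsilon)}(U^\pi_t)\geq f^{(1)}(U^\pi_t)$ for $\varepsilon\leq 1$ (from the monotonicity of $f^{(\varepsilon)}$ in $\varepsilon$) provides the dominator $|f(U^\pi_t)|+|f^{(1)}(U^\pi_t)|$. On $\{U^\pi_t\leq b^\ast\}$ the integrand decreases down to $f(U^\pi_t)\mathbf{1}_{\{U^\pi_t\leq b^\ast\}}$, and the same monotonicity in $\varepsilon$ yields $f(U^\pi_t)\leq f^{(\varepsilon)}(U^\pi_t)\leq f^{(1)}(U^\pi_t)$, giving the same dominator. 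Both $\bE_x[\int_0^\infty e^{-qt}|f(U^\pi_t)|\diff t]$ and $\bE_x[\int_0^\infty e^{-qt}|f^{(1)}(U^\pi_t)|\diff t]$ are finite because $\pi\in\mathcal{A}\subset\mathcal{A}^{(1)}$ (again by Lemma \ref{lemma_admissible_set_epsilon}), so dominated convergence applies on both pieces and the proof is complete.

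The main (mild) technical point is organising the dominators so as to leverage \emph{both} the monotonicity of $\varepsilon\mapsto f^{(\varepsilon)}$ established via \eqref{f_prime_epsilon_monotone} \emph{and} the admissibility bookkeeping of Lemma \ref{lemma_admissible_set_epsilon} at the reference level $\varepsilon=1$; once these ingredients are in place the argument reduces to two routine applications of dominated convergence.
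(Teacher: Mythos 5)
Your proposal is correct and uses essentially the same core argument as the paper: the key step is establishing $\lim_{\varepsilon\downarrow 0}v^{(\varepsilon)}_\pi(x)=v_\pi(x)$ for each fixed $\pi\in\cA$ via dominated convergence, using the monotonicity of $\varepsilon\mapsto f^{(\varepsilon)}$ and a fixed reference level ($\bar\varepsilon=1$ in your case, arbitrary $\bar\varepsilon$ in the paper) to produce the dominating function $|f|+|f^{(\bar\varepsilon)}|$, with integrability supplied by Lemma~\ref{lemma_admissible_set_epsilon}. Where you differ is in the concluding step. The paper picks an $\eta$-optimal $\hat\pi$ and writes $v(x)>v_{\hat\pi}(x)-\eta\geq\limsup_\varepsilon v^{(\varepsilon)}(x)-\eta$; this requires $v(x)>-\infty$, and the paper inserts a contradiction argument (using \eqref{v_varepsilon_solved_by_barrier} and Lemma~\ref{lemma_b_star_epsilon_conv}) to establish that finiteness inside this proof. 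You instead pass directly to $\limsup_\varepsilon v^{(\varepsilon)}(x)\leq v_\pi(x)$ for each $\pi$ and then take the infimum over $\pi$, which gives the lemma with no finiteness assumption on $v(x)$ --- cleaner for the statement as given. The one thing worth flagging: the paper's finiteness $v(x)>-\infty$ is actually reused a few lines later (in the chain $0\leq v_{b^*}(x)-v(x)=\limsup_\varepsilon[\cdot]\leq 0$), so while your streamlined conclusion is perfectly valid for the lemma, the $v(x)>-\infty$ fact still needs to be established somewhere before the section's final step.
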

\begin{proof}
We fix $\bar{\varepsilon}>0$. By \eqref{b001}, we have, for $\varepsilon\in(0,\bar{\varepsilon})$ 
\begin{align}
f^{(\varepsilon)} (x) \in 
\begin{cases}
[f^{(\bar{\varepsilon})}(x), f(x)], \quad &x\geq b^\ast, \\
[f(x), f^{(\bar{\varepsilon})}(x)], \quad &x< b^\ast,
\end{cases}
\end{align}
and thus
\begin{align}
|f^{(\varepsilon)}(x)| \leq |f^{(\bar{\varepsilon})}(x)|+|f(x)|,\quad x\in\bR. \label{b011}
\end{align}
For any $ {\pi} \in\cA(\subset\cA^{(\varepsilon)})$, 
{by Lemma \ref{lemma_admissible_set_epsilon},} 
$\bE_x \left[\int_0^\infty e^{-qt}(|f^{(\bar{\varepsilon})}(U^\pi_t)|+|f(U^\pi_t)|)\diff t\right]<\infty$, 
$x\in\bR$, 
and by \eqref{b011}, we can use the dominated convergence theorem and have 
$\lim_{\varepsilon\downarrow0}\bE_x\left[\int_0^\infty e^{-qt}f^{(\varepsilon)}(U^\pi_t)\diff t\right]
=\bE_x\left[\int_0^\infty e^{-qt}f(U^\pi_t)\diff t\right]$, $x\in\bR$,
and hence
\begin{align}
v_{\pi} (x) =\lim_{\varepsilon\downarrow 0}v_{\pi}^{(\varepsilon)}(x), \quad x \in \bR. \label{b006}
\end{align}

{
Here we show that  $v(x) > -\infty$.
To this end, suppose $v(x) = -\infty$. Then, for any small $-M$, there exists $\tilde{\pi} \in \mathcal{A}$ such that  $v_{\tilde{\pi}}(x) < - M$. Then we have, by \eqref{b006} and then \eqref{v_varepsilon_solved_by_barrier},
$-M > v_{\tilde{\pi}}(x) = \lim_{\varepsilon\downarrow 0}v_{\tilde{\pi}}^{(\varepsilon)}(x) \geq \limsup_{\varepsilon\downarrow 0}v_{b^*_{(\varepsilon)}}^{(\varepsilon)}(x)$.
Because $-M$ is arbitrary this implies $\lim_{\varepsilon \downarrow 0}v_{b^*_{(\varepsilon)}}^{(\varepsilon)}(x) = -\infty$. However, this is impossible. Indeed, because $b^*_{(\varepsilon)} \geq b^*$ by Lemma  \ref{lemma_b_star_epsilon_conv}, the reflected process $U^{b^*_{(\varepsilon)}}$ stays only on $[b^*, \infty)$. This and \eqref{b001} give, for $0 < \varepsilon < \bar{\varepsilon}$, 
\[
\bE_x\left[\int_0^\infty e^{-qt} f^{(\varepsilon)} (U^{b^*_{(\varepsilon)}}_t) \diff t\right] \geq \bE_x\left[\int_0^\infty e^{-qt} f^{(\bar{\varepsilon})} (U^{b^*_{(\varepsilon)}}_t) \diff t\right] \xrightarrow{\varepsilon \downarrow 0}\bE_x\left[\int_0^\infty e^{-qt} f^{(\bar{\varepsilon})} (U^{b^*}_t) \diff t\right] > -\infty
\]
where the convergence holds by monotone convergence and  \eqref{49} and the finiteness holds by Lemma \ref{Thm201}. Hence, we must have $v(x) > -\infty$ by contradiction.
}

Now, for a fixed $\eta >0$ 
and $x\in\bR$, 
there exists $\hat{\pi} \in\cA(\subset\cA^{(\varepsilon)})$ such that 
$v(x)>v_{\hat{\pi}}(x)- \eta$.
From \eqref{b006} {and because $v_{\hat{\pi}}^{(\varepsilon)}(x) \geq v^{(\varepsilon)}(x)$ (see \eqref{v_varepsilon_def})}, we have   
$v(x)>\lim_{\varepsilon\downarrow 0}v_{\hat{\pi}}^{(\varepsilon)}(x)- \eta
\geq \limsup_{\varepsilon\downarrow 0}v^{(\varepsilon)}(x)- \eta$. 
Since {$\eta > 0$ is arbitrary, the proof is complete.}
\end{proof}

\par
{
By Lemma \ref{lemma_b_star_epsilon_conv}  and the continuity of $b \mapsto v_b(x)$ as in Lemma \ref{Lem202}(i), we have 
\begin{align}
\lim_{\varepsilon \downarrow 0}|v_{b^\ast}(x)-v_{b^\ast_{(\varepsilon)}}(x)| = 0. \label{v_diff_epsilon}
\end{align}

Because
$f_+'(x) - f^{(\varepsilon)\prime}(x):= \frac{1}{\varepsilon^2}\int^0_{-\varepsilon} \diff z\int^0_{-\varepsilon} (f_+'(x)- f^\prime_+(x+y+z)) \diff y \geq 0$, $x \in \R$,
the mapping $x \mapsto f(x) - f^{(\varepsilon)}(x)$ is increasing {on $[b^*,\infty)$}. In addition, {as $\varepsilon$ decreases, }$U^{b^*_{(\varepsilon)}}_t$ decreases for each $t \geq 0$ because $b^*_{(\varepsilon)}$ decreases by Lemma  \ref{lemma_b_star_epsilon_conv}. Since $b^*_{(\varepsilon)} \geq b^*$ by Lemma  \ref{lemma_b_star_epsilon_conv}, the reflected process $U^{b^*_{(\varepsilon)}}$ stays only on $[b^*, \infty)$. Hence, for any fixed $\bar{\varepsilon} > 0$, we have  $0 \leq (f - f^{(\varepsilon)}) (U^{b^*_{(\varepsilon)}}_t) \leq (f - f^{(\varepsilon)}) (U^{b^*_{(\bar{\varepsilon})}}_t) $ for all $0 < \varepsilon \leq \bar{\varepsilon}$. By these and monotone convergence,
\begin{align*}
0 \leq v_{b^\ast_{(\varepsilon)}}(x)-v_{b^\ast_{(\varepsilon)}}^{(\varepsilon)}(x) = \bE_x\left[\int_0^\infty e^{-qt} (f - f^{(\varepsilon)}) (U^{b^*_{(\varepsilon)}}_t) \diff t\right]  \leq \bE_x \left[\int_0^\infty e^{-qt} (f - f^{(\varepsilon)}) (U^{b^*_{(\bar{\varepsilon})}}_t) \diff t\right] \xrightarrow{\varepsilon \downarrow 0} 0.
\end{align*}
By this and \eqref{v_diff_epsilon} together with  \eqref{v_varepsilon_solved_by_barrier}, we have
\begin{align}
\lim_{\varepsilon \downarrow 0}|v_{b^\ast}(x)-v^{(\varepsilon)}(x)|= \lim_{\varepsilon \downarrow 0}|v_{b^\ast}(x)-v_{b^\ast_{(\varepsilon)}}^{(\varepsilon)}(x)|  = 0. {\label{c001}} 
\end{align}

{
Now, by Lemma \ref{lemma_b002} and \eqref{c001},
$0 \leq v_{b^*}(x) - v(x) = \limsup_{\varepsilon \downarrow 0}[(v^{(\varepsilon)}(x)- v(x)) {-} (v^{(\varepsilon)}(x)- v_{b^*}(x) )] \leq 0$,
showing $ v_{b^*}(x) = v(x)$ and hence the barrier strategy with barrier $b^*$ is optimal, as desired.}

\section{Driftless compound Poisson cases} \label{sec_compound_poisson_case} 

We now relax Assumption \ref{Ass101}(i) and show that  Theorem \ref{theorem_main_2} holds true when $X$ is a driftless compound Poisson process, i.e. 
$\Psi (\lambda) = \int_{\bR \backslash \{0\} } (1-e^{i\lambda z}) \Pi(\diff z)$ for $\lambda\in\bR$ with $\Pi(\bR \backslash \{0\})< \infty$. We continue to use the same notations/symbols for a compound Poisson process $X$. 
%

For $\varepsilon\in\bR$, we define 
\begin{align}
X^{[\varepsilon]}_t = X_t +\varepsilon t, \quad t\geq 0. 
\end{align}
Accordingly, for $b\in\bR$, let $R^{[\varepsilon], b}=\{R^{[\varepsilon], b}_t : t\geq 0\}$, $U^{[\varepsilon], b}=\{U^{[\varepsilon], b}_t : t\geq 0\}$ and $v_{b}^{[\varepsilon]}$, respectively, be  the cumulative amount of control, the resulting {controlled} process and the expected cost when we apply the barrier strategy at $b$ to $X^{[\varepsilon]}$. 
Let $b^\ast_{[\varepsilon]}$ be the barrier defined by \eqref{def_b_star} for $X^{[\varepsilon]}$.

By Theorem \ref{theorem_main_2}, the barrier strategy at $b^\ast_{[\varepsilon]}$ is optimal for the problem driven by  $X^{[\varepsilon]}$ {at least} when $\varepsilon \neq 0$.  Our objective in this section is to show that this remains to hold  when $\varepsilon = 0$, i.e., $v(x)=v_{b^\ast}(x)$ for all $x \in \R$, where the barrier $b^* = b^*_{[0]}$ is defined in  
\eqref{def_b_star}, which clearly makes sense even for a driftless compound Poisson process $X = X^{[0]}$. In the remaining, we can safely  drop the subscript/superscript $[\varepsilon]$ when $\varepsilon = 0$. Also, we fix the initial value $x \in \R$ for the rest of this section.
%
%
%

{First, for } $\varepsilon_2 > \varepsilon_1$, we have {the following bounds:}
\begin{align} \label{bound_wrt_epsilon}
U^{[\varepsilon_2], b}_t- U^{[\varepsilon_1], b}_t \in [0, t(\varepsilon_2 - \varepsilon_1)], 
\quad 
R^{[\varepsilon_1], b}_t- R^{[\varepsilon_2], b}_t \in [0, t(\varepsilon_2 - \varepsilon_1)],
\quad t\geq 0.  
 \end{align}
To see the above, the latter holds because 
 $X_t^{[\varepsilon_2]}-X_t^{[\varepsilon_1]} = t (\varepsilon_2-\varepsilon_1)$ and by the definition of $R^{[\varepsilon], b}$ in terms of the running infimum of $X^{[\varepsilon]}$, which directly implies the former.
%
%
%
\begin{lemma} \label{lemma_left_b}
We have ${\varepsilon \mapsto} b^*_{[\varepsilon]}$ is nonincreasing {on $\R$} and left-continuous at $0$.
\end{lemma}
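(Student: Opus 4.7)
The plan is to establish monotonicity directly from the pathwise bound \eqref{bound_wrt_epsilon} and then to prove left-continuity at $0$ by contradiction, via monotone convergence combined with a countable-exception analysis of the $q$-potential of the reflected process. For monotonicity, fix $\varepsilon_1 < \varepsilon_2$. By \eqref{bound_wrt_epsilon}, $U^{[\varepsilon_1],0}_t \leq U^{[\varepsilon_2],0}_t$ for all $t \geq 0$, $\bP$-a.s., and since $f'_+$ is nondecreasing by the convexity of $f$ (Assumption \ref{Ass201}(i)), $f'_+(U^{[\varepsilon_1],0}_t+b) \leq f'_+(U^{[\varepsilon_2],0}_t+b)$ for every $b \in \bR$. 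Integrating against $e^{-qt}\diff t$ and taking expectation yields $\rho^{[\varepsilon_1]}(b) \leq \rho^{[\varepsilon_2]}(b)$, so by the definition \eqref{def_b_star}, $b^*_{[\varepsilon_2]} \leq b^*_{[\varepsilon_1]}$.

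For left-continuity, monotonicity gives that $L := \lim_{\varepsilon \uparrow 0} b^*_{[\varepsilon]} \in [b^*_{[0]}, \infty)$ exists, with finiteness following from Assumption \ref{Ass201}(iii) (as in the paragraph after \eqref{def_b_star}, any fixed $\varepsilon_0 < 0$ gives $b^*_{[\varepsilon_0]} < \infty$, hence $L \leq b^*_{[\varepsilon_0]}$). I show $L \leq b^*_{[0]}$ in two cases. If $X$ is the negative of a subordinator, then $X^{[\varepsilon]} = X + \varepsilon t$ is $\bP$-a.s.\ nonincreasing for every $\varepsilon \leq 0$, so $U^{[\varepsilon],0}_t \equiv 0$ and $\rho^{[\varepsilon]}(b) = f'_+(b)/q$ is independent of $\varepsilon \leq 0$, whence $b^*_{[\varepsilon]} = b^*_{[0]}$ for all such $\varepsilon$ and left-continuity is trivial. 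Otherwise, the proof of Lemma \ref{Lem402}(ii) can be replayed verbatim (it invokes only Assumption \ref{Ass201}(iii) and that $U^{[0],0}$ can exceed any level) to conclude $\rho^{[0]}(b) + C > 0$ for every $b > b^*_{[0]}$. Fixing such a $b$, \eqref{bound_wrt_epsilon} gives $U^{[\varepsilon],0}_t \uparrow U^{[0],0}_t$ as $\varepsilon \uparrow 0$, so $f'_+(U^{[\varepsilon],0}_t+b) \uparrow f'_-(U^{[0],0}_t+b)$ pointwise, and monotone convergence (applied to the nonnegative increments from a fixed $\varepsilon_0<0$) yields
\[
\lim_{\varepsilon \uparrow 0}\rho^{[\varepsilon]}(b) \;=\; \tilde\rho(b) \;:=\; \bE\Big[\int_0^\infty e^{-qt} f'_-(U^{[0],0}_t+b)\,\diff t\Big].
\]
Letting $D$ denote the (countable) set of discontinuities of $f'$ and $R^{(q)}_{U^{[0],0}}$ the $q$-potential of $U^{[0],0}$, one finds
\[
\rho^{[0]}(b) - \tilde\rho(b) \;=\; \sum_{d \in D}\bigl(f'_+(d) - f'_-(d)\bigr)\, R^{(q)}_{U^{[0],0}}\bigl(\{d - b\}\bigr),
\]
which vanishes whenever $b$ avoids the countable set $E := \{d - a : d \in D,\ a \in \mathrm{atoms}(R^{(q)}_{U^{[0],0}})\}$. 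If $L > b^*_{[0]}$, I pick $b \in (b^*_{[0]}, L) \setminus E$; then $\lim_{\varepsilon \uparrow 0}\rho^{[\varepsilon]}(b) = \rho^{[0]}(b) > -C$, hence $\rho^{[\varepsilon]}(b) + C > 0$ for $|\varepsilon|$ small, giving $b^*_{[\varepsilon]} \leq b < L$ and contradicting $b^*_{[\varepsilon]} \geq L$.

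The main obstacle is the second case: unlike the setting of Lemma \ref{Lem402}(i), the reflected driftless compound Poisson process $U^{[0],0}$ can dwell at $0$ for positive time, so its $q$-potential $R^{(q)}_{U^{[0],0}}$ typically carries mass at $0$ (and possibly elsewhere), and the pointwise identity $\tilde\rho = \rho^{[0]}$ can fail at isolated points where $f'$ has a jump. The countable-exception argument above is exactly what is needed to select a ``generic'' $b$ strictly between $b^*_{[0]}$ and $L$ at which $\tilde\rho(b) = \rho^{[0]}(b)$, converting the strict bound from (the analogue of) Lemma \ref{Lem402}(ii) into a contradiction.
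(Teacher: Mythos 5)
Your proof is correct and its overall structure mirrors the paper's: monotonicity from \eqref{bound_wrt_epsilon} together with convexity, then a two-case argument (negative-of-subordinator vs.\ not) for left-continuity, with the second case argued by contradiction using Lemma~\ref{Lem402}(ii) after identifying the monotone limit of $\rho_{[\varepsilon]}$. Where you differ is in the mechanism used to convert the limit into a contradiction. The paper picks $\tilde b \in (b^*_{[0]}, L)$ and uses the \emph{monotonicity of $\rho$ in the barrier} to bound the limit from below: since $f'_-(y+\tilde b) \geq f'_+(y+b')$ for every $b' < \tilde b$, one gets $\lim_{\varepsilon\uparrow 0}\rho^{[\varepsilon]}(\tilde b) \geq \tilde\rho(\tilde b) \geq \rho(b')$ for $b' \in (b^*_{[0]}, \tilde b)$, and Lemma~\ref{Lem402}(ii) applied to $b'$ finishes. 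You instead force an \emph{exact} identity $\tilde\rho(b) = \rho^{[0]}(b)$ by selecting $b$ outside the countable set $E$ of shifts at which the atoms of the $q$-potential of $U^{[0],0}$ collide with the discontinuities of $f'$. Both routes are sound; yours is a bit heavier because it must explicitly cope with the fact that, unlike the unbounded-variation setting of Lemma~\ref{Lem402}(i), the driftless compound Poisson reflected process \emph{does} have an atom at $0$ (and possibly elsewhere). The paper's route sidesteps this by never needing $\tilde\rho = \rho^{[0]}$; it only needs the one-sided bound against a smaller $b'$. Your choice $b \notin E$ also quietly repairs a minor imprecision present in both your write-up and the paper's: the monotone-convergence limit $\lim_{\varepsilon\uparrow 0}\rho^{[\varepsilon]}(b)$ equals $\tilde\rho(b)$ only when the pointwise limit of $f'_+(U^{[\varepsilon],0}_t + b)$ is $f'_-(U^{[0],0}_t+b)$ a.e.; for the driftless compound Poisson process the paths $U^{[\varepsilon],0}$ and $U^{[0],0}$ coincide on a set of positive time-measure, so the pointwise limit is $f'_+$ rather than $f'_-$ there. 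For $b \notin E$ this distinction is immaterial, which is exactly what your exceptional-set argument ensures, so your conclusion holds.
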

\begin{proof}
By the monotonicity of $\varepsilon \mapsto U^{[\varepsilon], b}_t$ {(shown by the former of  \eqref{bound_wrt_epsilon})} and the convexity of $f$ as in Assumption \ref{Ass201}(i), the mapping $\varepsilon \mapsto \rho_{[\varepsilon]}(b)
 := \bE_b \left[\int_0^\infty e^{-qt} f^\prime_+ (U^{[\varepsilon], b}_t)\diff t\right] {=\bE \left[\int_0^\infty e^{-qt} f^\prime_+ (U^{[\varepsilon], 0}_t+b)\diff t\right]}$ 
is nondecreasing, and thus $\varepsilon \mapsto b^*_{[\varepsilon]}$ is nonincreasing.  Therefore, $b^*_{[-0]} := \lim_{\varepsilon \downarrow 0} b^*_{[-\varepsilon]} {\geq b^*}$ exists.
{For the left-continuity, it suffices to show} 
 $b^*= b^*_{[-0]}$. {For the case $X$ is the negative of a subordinator, then, for $\varepsilon > 0$, $U^{[-\varepsilon],b} \equiv U^{b} = b$ uniformly in time $\mathbb{P}_b$-a.s.\ for all $b \in \mathbb{R}$ and hence $b^* = b^*_{[-\varepsilon]}$, implying the left-continuity. Hence, below we assume $X$ is not the negative of a subordinator.}

We suppose $b^*< b^*_{[-0]}$ to derive a contradiction. By this assumption, we can take $\tilde{b} \in (b^*, b^*_{[-0]})$. Again, by the monotonicity of $\varepsilon \mapsto U^{[\varepsilon], b}_t$ and the convexity of $f$,
 monotone convergence gives $\rho_{[- \varepsilon]}(\tilde{b}) \xrightarrow{\varepsilon \downarrow 0} \bE \left[\int_0^\infty e^{-qt} f^\prime_- (U^{0}_t + \tilde{b})\diff t\right] > \bE \left[\int_0^\infty e^{-qt} f^\prime_+ (U^{0}_t + b^*)\diff t\right] = \rho(b^*) \geq -C$, {where the strict inequality holds by Lemma \ref{Lem402}(ii).} This contradicts with the fact that $\rho_{[-\varepsilon]}(\tilde{b}) < -C$ for all $\varepsilon > 0$ (implied by the assumption that $\tilde{b} < b^*_{[-0]}$), {completing the proof.}
%
\end{proof}


{By Lemma \ref{lemma_left_b}, we have 
\begin{align}
\beta(\varepsilon) &:= b^\ast_{[-\varepsilon]}-b^\ast \geq 0, \quad \varepsilon > 0, \label{beta_epsilon} \\
\beta(\varepsilon) &\xrightarrow{\varepsilon \downarrow 0} 0. \label{beta_vanish}
\end{align}
}
{For all $\varepsilon > 0$ and $t \geq 0$,} {by applying
\eqref{6}-\eqref{8}} and \eqref{bound_wrt_epsilon} to
$U^{[-\varepsilon], b^\ast_{[-\varepsilon]}}_t-U^{b^\ast}_t=
( U^{ b^\ast_{[-\varepsilon]}}_t-U^{b^\ast}_t)+(U^{[-\varepsilon], b^\ast_{[-\varepsilon]}}_t - U^{ b^\ast_{[-\varepsilon]}}_t)$ and
$R^{[-\varepsilon], b^\ast_{[-\varepsilon]}}_t-R^{b^\ast}_t=
( R^{ b^\ast_{[-\varepsilon]}}_t-R^{b^\ast}_t)+(R^{[-\varepsilon], b^\ast_{[-\varepsilon]}}_t-R^{ b^\ast_{[-\varepsilon]}}_t )$, we have 
\begin{align}
U^{[-\varepsilon], b^\ast_{[-\varepsilon]}}_t-U^{b^\ast}_t \in  [ -t\varepsilon, {\beta(\varepsilon)} ], 
\quad
R^{[-\varepsilon], b^\ast_{[-\varepsilon]}}_t-R^{b^\ast}_t \in  [0, t\varepsilon+ {\beta(\varepsilon)} ]. \label{103}
\end{align}
{By the former and} the convexity of $f$, for all $t \geq 0$,
\begin{align*}
| f(U^{[-\varepsilon], b^\ast_{[-\varepsilon]}}_t)-f(U^{b^\ast}_t) | &\leq  | U^{[-\varepsilon], b^\ast_{[-\varepsilon]}}_t -U^{ b^\ast}_t |
\Big(|f^\prime_+(U^{[-\varepsilon], b^\ast_{[-\varepsilon]}}_t   )|\lor|f^\prime_+   (U^{b^\ast}_t)| \Big)
\\ &\leq (t\varepsilon+{\beta(\varepsilon)})
\Big(|f^\prime_+(U^{[-\varepsilon], b^\ast_{[-\varepsilon]}}_t   )|\lor|f^\prime_+   (U^{b^\ast}_t)| \Big).
\end{align*}
By this, integration by parts and \eqref{103},
for $\varepsilon > 0$, we have 
\begin{align}
&\left| v_{b^\ast_{[-\varepsilon]}}^{[-\varepsilon]}(x)-v_{b^\ast}(x) \right|\\
&\leq \bE_x \left[\int_0^\infty e^{-qt} | f(U^{[-\varepsilon], b^\ast_{[-\varepsilon]}}_t)-f(U^{b^\ast}_t) |\diff t \right]
 +|C|q \bE_x \left[\int_0^\infty e^{-qt}(R^{[-\varepsilon], b^\ast_{[-\varepsilon]}}_t-R^{b^\ast}_t)\diff t \right] \\
&\leq
\bE_x \left[\int_0^\infty e^{-qt}(t\varepsilon+{\beta(\varepsilon)})
\Big(|f^\prime_+(U^{[-\varepsilon], b^\ast_{[-\varepsilon]}}_t   )|\lor|f^\prime_+   (U^{b^\ast}_t)| \Big)
\diff t \right] +|C|q \bE_x \left[\int_0^\infty e^{-qt}(t\varepsilon+ {\beta(\varepsilon)} )\diff t \right]. 
\end{align}


Fix $\overline{\varepsilon} > 0$ and take $q_1\in (0, q)$ and $k_{q_1}>0$ such that $te^{-qt} < k_{q_1} e^{-q_1 t}$ for all $t\geq 0$. For $0 < \varepsilon < \overline{\varepsilon}$, because (recalling $b^\ast \leq b^\ast_{[-{\varepsilon}]} \leq b^\ast_{[-\overline{\varepsilon}]}$ by \eqref{beta_epsilon}) 
$U^{[-\overline{\varepsilon}], b^\ast}_t \leq U^{[-\varepsilon], b^\ast_{[-\varepsilon]}}_t \leq U^{b^\ast_{[-\overline{\varepsilon}]}}_t$ and $U^{[-\overline{\varepsilon}], b^\ast}_t \leq U^{b^\ast}_t\leq U^{b^\ast_{[-\overline{\varepsilon}]}}_t$ {and $f$ is convex}, we have  
\begin{align}
|f^\prime_+(U^{-[\varepsilon], b^\ast_{[-\varepsilon]}}_t   )|\lor|f^\prime_+   (U^{b^\ast}_t)|
\leq |f^\prime_+(U^{[-\overline{\varepsilon}], b^\ast}_t   )|+|f^\prime_+   (U^{b^\ast_{[-\overline{\varepsilon}]}}_t)|.
\end{align}
{Therefore,}
 \begin{align*}
 \left| v_{b^\ast_{[-\varepsilon]}}^{[-\varepsilon]}(x)-v_{b^\ast}(x) \right|
 \leq  
 \bE_x \left[\int_0^\infty \left(\varepsilon k_{q_1} e^{-q_1 t} + e^{-qt} {\beta(\varepsilon)}\right)
\Big(|f^\prime_+(U^{[-\overline{\varepsilon}], b^\ast}_t   )|+|f^\prime_+   (U^{b^\ast_{[-\overline{\varepsilon}]}}_t)|+|C|q\Big)
\diff t \right]
\xrightarrow{\varepsilon \downarrow 0}0, 
 \end{align*}
{where the integrability of the expectation 
 can be shown as in} the proof of Lemma \ref{Lem201} and \eqref{beta_vanish}.
 Hence, recalling that the optimal value function when driven by $X^{[-\varepsilon]}$ is $v^{[-\varepsilon]}(x)=  v^{[-\varepsilon]}_{b^\ast_{[-\varepsilon]}}$ when $\varepsilon \neq 0$,
 \begin{align}
 \lim_{\varepsilon\downarrow0} v^{[-\varepsilon]}(x)=  \lim_{\varepsilon\downarrow0} v^{[-\varepsilon]}_{b^\ast_{[-\varepsilon]}}(x)=v_{b^\ast}(x) \geq v(x). \label{104}
 \end{align}
 
%

For $\delta>0$, we can take {an admissible strategy (for the problem driven by the original process $X$)} $\pi_{(\delta)}\in \cA $ 
such that $v(x)+\delta>v_{\pi_{(\delta)}} (x)$.   For $\varepsilon > 0$, we define $\pi^{[-\varepsilon]}_{(\delta)}=\{ R^{\pi_{(\delta)}}_t + \varepsilon t: t\geq 0\}$,
 which is also admissible for the problem driven by $X^{[-\varepsilon]}$ (since $\pi^{[-\varepsilon]}_{(\delta)}$ is adapted to the filtration generated by $X^{[-\varepsilon]}$). 
{Let $U^{[-\varepsilon], \pi^{[-\varepsilon]}_{(\delta)}}_t$ be the controlled process in \eqref{def_U_pi} and $v^{[-\varepsilon]}_{\pi^{[-\varepsilon]}_{(\delta)}}(x)$ its corresponding expected cost  \eqref{v_pi} for the problem driven by $X^{[-\varepsilon]}$.}
Because
$U^{\pi_{(\delta)}}_t = X_t +  R^{\pi_{(\delta)}}_t =  X_t^{[-\varepsilon]} + (R^{\pi_{(\delta)}}_t + \varepsilon t) = U^{[-\varepsilon], \pi^{[-\varepsilon]}_{(\delta)}}_t$ for $t \geq 0$,
\begin{align}
\Big|v^{[-\varepsilon]}_{\pi^{[-\varepsilon]}_{(\delta)}}(x)-v_{\pi_{(\delta)}}(x) \Big|
\leq |C|\bE_x \left[\int_0^\infty e^{-qt}  \diff (R^{\pi^{[-\varepsilon]}_{(\delta)}}_t -R^{\pi_{(\delta)}}_t) \right]
=|C|\varepsilon \int_0^\infty e^{-qt} \diff t \xrightarrow{\varepsilon \downarrow 0} 0.
\end{align}
Hence,
$v(x)+\delta > v_{\pi_{(\delta)}} (x) = \lim_{\varepsilon\downarrow0}v^{[-\varepsilon]}_{\pi^{[-\varepsilon]}_{(\delta)}}(x)
\geq \lim_{\varepsilon\downarrow0} v^{[-\varepsilon ]}(x)$, 
{where  the last inequality holds because $v^{[-\varepsilon]}$ is the optimal value function (for the problem driven by $X^{[-\varepsilon]}$).}
{Because $\delta > 0$ is arbitrary,
$v(x)\geq \lim_{\varepsilon\downarrow0} v^{[-\varepsilon]}(x)$, which, together with \eqref{104}, shows $v(x)=v_{b^\ast}(x)$. } 
{This completes the proof  of Theorem \ref{theorem_main_2} when $X$ is a driftless compound Poisson process.}
}
%

\section{Concluding remark} \label{section_conclusion} 

In this paper, we studied a classical singular control problem for \lev processes and showed the optimality of a barrier strategy.  We obtained a concise expression of the optimal strategy that holds for a general class of \lev processes of bounded or unbounded variation. 

There are various venues for future research. First, it is natural to consider the case with fixed costs, where the objective is to obtain an optimal impulse control. In the spectrally negative case by \cite{Yam2017}, the $(s,S)$-policy is shown to be optimal for a suitable selection of barriers $s$ and $S$.  However, the method used in \cite{Yam2017}  applies only to the spectrally negative L\'evy case. The verification involving integro-differential equations is extremely challenging for a general L\'evy process with two-sided jumps. However, the pathwise analysis obtained in this paper is expected to hold similarly and hence the derivatives with respect to the starting point as well as the barriers $s$ and $S$ can be written in a similar way. The biggest challenge is to show the quasi-variational inequality, which requires non-standard techniques as in \cite{Benkherouf}  even in the spectrally negative case.

It is also of great interest to consider the versions with restricted sets of strategies.  In \cite{Hernandez}, the optimality of a refraction strategy was shown for the case the control process is assumed to be absolutely continuous with a bounded density with respect to the Lebesgue measure.  In \cite{Perez_Yamazaki_Bensoussan}, the optimality of a certain barrier strategy was shown for the case when control opportunities arrive only at Poisson arrival times. These results are based on the assumption of spectrally negative \lev processes, but from the conclusions obtained in this paper (see in particular  Remark \ref{remark_rho_variants}), the optimality is conjectured to hold for a general \lev process as well. The pathwise and smoothness analysis and the technique for verification are certainly helpful in tackling these problems.

The connection between singular control and optimal stopping is well known  (see, e.g., \cite{Boetius,KS1,KS2,Oksendal}), and pursuing this is an alternative approach for solving a singular control problem. In this paper, however, we avoided this approach and focused on solving the singular control problem directly for two reasons.  First, the concise characterization of the optimal barrier via \eqref{def_rho} and other expressions in terms of the reflected process are natural results obtained by focusing on the singular control.  Moreover, technical proofs such as those for smoothness rely on analytical properties of reflected processes. For these reasons, reduction to optimal stopping is unlikely to simplify the solution for the considered problem.  Second reason is related to the discussion in Remark \ref{remark_rho_variants} that the optimal barriers in  \cite{Hernandez} and \cite{Perez_Yamazaki_Bensoussan} are expressed in the same way with $U_t^b$ replaced by variants of the reflected process. This generality is likely to be lost when transformed to an optimal stopping problem.  Many of the results obtained in this paper can be directly used when a general \lev case is considered for    \cite{Hernandez} and \cite{Perez_Yamazaki_Bensoussan}.
However, for a different formulation, for example with a terminal horizon, random discounting and more general running cost functions, reduction to optimal stopping problem may become a more efficient approach. Optimal stopping for a \lev process is significantly more challenging than the Brownian motion or diffusion case and existing results are still limited.
However, several fluctuation theory approaches for optimal stopping have been recently developed for a general \lev process (see, e.g., \cite{Long_Zhang, Surya}). 

Finally,  the methods developed in this paper can potentially be applied to other singular control problems.  A majority of recent developments in the \lev model focus on the spectrally one-sided cases.  In de Finetti's optimal dividend problem, for example, it is standard to model the surplus of an insurance company in terms of a spectrally negative \lev process (see, among others, \cite{AvrPalPis2007, Loeffen}). The dual model driven by spectrally positive \lev processes has also been studied actively \cite{Avanzi, BE, BKY}.
Other singular control problems for spectrally negative \lev processes include
\cite{BY, Hernandez_Yamazaki}, which require two barriers to characterize the optimal strategy. 
While spectrally one-sided \lev models typically admit semi-explicit solutions, often written in terms of the scale function, in many applications, it is more realistic to consider processes with jumps in both directions and, in some cases, subordinators.
Our approach works for a general class of  \lev processes including subordinators/driftless  compound Poisson processes, for which classical scale function/Wiener-Hopf theory cannot be directly applied. 

\appendix 
\section{Proofs}

\subsection{{Proof of Lemma \ref{Rem201A}}} \label{sec0A}
{(i)}
By Assumption \ref{Ass101}(i) and \cite[Exercise 6.4]{Kyp2014}, we have 
\begin{align}
\int_0^\infty 1_{\{\sup_{s\in[0,t]} X_s=X_t \}}\diff t= 0 \quad \text{or}\quad \int_0^\infty 1_{\{\inf_{s\in[0,t]} X_s=X_t \}}\diff t= 0,
\quad \text{{$\p$-}a.s.}\label{32}
\end{align}

To show that \eqref{32} implies that $0$ is regular for $\bR \backslash \{0\}$, suppose 
%
to derive a contradiction that $0$ is not regular for $\mathbb{R} \backslash \{0\}$.
Then, Blumenthal's zero-one law gives
$\bP(T_{\bR\backslash\{0\}} > 0) {= 1}$, 
where 
$T_{\bR\backslash\{0\}}:= \inf\{t>0: X_t\neq 0\}$.
Since $1=\bP(T_{\bR\backslash\{0\}}>0)=\lim_{n\uparrow\infty}\bP(T_{\bR\backslash\{0\}}>\frac{1}{n})$ by the dominated convergence theorem,
there exists $\epsilon>0$ such that
$\bP(T_{\bR\backslash\{0\}}>\epsilon)>0$. 
{On} $\{T_{\bR\backslash\{0\}}>\epsilon\}$, we have $X_t=0$ for $t \in [0, \epsilon]$ which implies that 
{$\sup_{s\in[0,t]} X_s=\inf_{s\in[0,t]} X_s=0$} for $t \in [0, \epsilon]$, {contradicting \eqref{32}.}

\par

{(ii)} 
We now show the continuity of $x\mapsto\tau^-_x$.  
For $x>0$, we have $\tau^-_x=0$ {$\p$-a.s.,} and so the continuity is obvious. Thus we assume $x \leq 0$ {for the rest of the proof}. 

From the definition of $\tau^-_x$ {as in \eqref{def_tau_b}}, it is {immediate} that $\bP$-a.s.\ as $\varepsilon\downarrow 0$, 
$\tau^-_{x-\varepsilon}\downarrow \tau^-_x$ and 
$\tau^-_{x+\varepsilon}\uparrow T^-_x\leq\tau^-_x$, 
where $T^-_x:= \inf \{t\geq 0: X_t \leq x\}$. 
This shows the left-continuity for $x  \leq 0$ (including $x = 0$). It now remains to show for $x < 0$ that the left- and right-limits coincide.
Here, we want to prove $\tau^-_x = T^-_x$, $\bP$-a.s. for $x \in (-\infty , 0)$. 
By the strong Markov property, we have 
\begin{align}
\bE \left[ e^{- \tau^-_x}\right]=\bE \left[e^{-  T^-_x}\bE_{X_{T^-_x}} \left[ e^{-  \tau^-_x}\right] {1_{\{T_x^- < \infty \}}}\right]. \label{52}
\end{align}
\par
When $0$ is regular for $(-\infty, 0)$, then, {because  $X_{T^-_x} \leq x$, we have } $\bE_{X_{T^-_x}} \left[ e^{-  \tau^-_x}\right]=1$ a.s.\ {on $\{T^-_x < \infty \}$} and thus
the right hand side of \eqref{52} is equal to $\bE \left[e^{-  T^-_x}\right]$ which, {together with the fact that $\tau_x^- \geq T_x^-$ a.s.}, implies that $\tau^-_x = T^-_x$, $\bP$-a.s. 
\par
When $0$ is irregular for $(-\infty, 0)$, then $X$ has bounded variation paths and has a non-negative drift by \cite[Theorem 6.5]{Kyp2014}. 
{
If the process jumps downward onto $x$ {at $T_x^-$}, then by the irregularity it immediately goes up and  $\tau_x^- > T_x^-$ and  therefore $\inf_{t\in[0, \tau^-_x)}X_t \leq X_{T_x^-} = x$ (meaning $\{ \inf_{t\in[0, \tau^-_x)}X_t > x, X_{T_x^-} = x \}$ is {a $\p$-null set}). In other words, when $\inf_{t\in[0, \tau^-_x)}X_t > x$,
we must have $X_{T^-_x}  < x$ and hence {we must have} $\tau_x^- = T_x^-$. Therefore,  $\{\inf_{t\in[0, \tau^-_x)}X_t > x \} \subset \{ \tau_x^- = T_x^- \} \cup \{ T_x^- = \infty \}$, {implying }
\begin{align}
\mathbb{P} \{\tau^-_x > T^-_x, T_x^- < \infty \} =\mathbb{P}  \{ \tau_x^- \neq  T_x^-, T_x^- < \infty \} \leq \mathbb{P}  \{\inf_{t\in[0, \tau^-_x)}X_t \leq x\} = \mathbb{P}  \{\inf_{t\in[0, \tau^-_x)}X_t =x\}.
\end{align}}
Hence, the following lemma completes the proof for the case  $0$ is irregular for $(-\infty, 0)$.
%
%
%
%

\begin{lemma}
If $0$ is {irregular} for $(-\infty, 0)$, then $\bP (\inf_{t\in[0, \tau^-_x)}X_t =x )=0$ for $x\in(-\infty , 0)$. 
\end{lemma}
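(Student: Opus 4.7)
The plan is to combine \cite[Proposition I.15]{Ber1996} (atomlessness of the $1$-resolvent of $X$ under Assumption \ref{Ass101}(i)) with the compensation formula for the jumps of $X$ and the bounded-variation structure forced by the irregularity hypothesis.

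First, since $X$ is c\`adl\`ag and $X_t\geq x$ throughout $[0,\tau^-_x)$, the event $\{\inf_{t\in[0,\tau^-_x)} X_t=x\}$ implies the existence of some $t^*\in[0,\tau^-_x]$ with either $X_{t^*}=x$ or $X_{t^*-}=x$. It then suffices to bound each of these two possibilities separately.

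Second, I would apply the compensation formula to the Poisson random measure of jumps of $X$, truncated to jump sizes exceeding $1/n$ to ensure finiteness. For each $n\in\bN$,
\begin{align*}
\bE\Big[\sum_{s>0,\,|\Delta X_s|>1/n} e^{-s}\rbra{1_{\{X_{s-}=x\}}+1_{\{X_s=x\}}}\Big]\leq 2\Pi(\{|z|>1/n\})\,\bE\Big[\int_0^\infty e^{-s}1_{\{X_s=x\}}\diff s\Big]=0,
\end{align*}
where the final equality uses that the $1$-resolvent of $X$ has no atoms by \cite[Proposition I.15]{Ber1996}, and where the identity $\int_0^\infty e^{-s}1_{\{X_{s-}=x\}}\diff s=\int_0^\infty e^{-s}1_{\{X_s=x\}}\diff s$ holds $\bP$-a.s.\ since $\{s:X_{s-}\neq X_s\}$ is countable. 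Letting $n\uparrow\infty$ shows that almost surely no jump time $s$ of $X$ satisfies $X_{s-}=x$ or $X_s=x$.

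Third, by \cite[Theorem 6.5]{Kyp2014}, the irregularity of $0$ for $(-\infty,0)$ forces $X$ to be of bounded variation with drift $\gamma\geq 0$, so that the continuous part $\gamma t$ of $X$ is non-decreasing. Since $X_0=0>x$ and $X_t\geq x$ on $[0,\tau^-_x)$, the process cannot be driven down to the level $x$ from above by its continuous part alone. Hence any time $t^*$ at which $X_{t^*}=x$ or $X_{t^*-}=x$ must correspond to a jump of $X$ having pre- or post-jump value equal to $x$, which by the second step has probability zero, and the conclusion follows.

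The main obstacle I anticipate is the borderline case $\gamma=0$ with infinite-activity L\'evy measure, where a dense accumulation of small downward jumps could in principle force $X_{t^*-}=x$ in the limit without any single jump being exactly at $x$. This scenario is ruled out by noting that under the irregularity hypothesis the running infimum $\underline{X}_t:=\inf_{s\leq t}X_s$ decreases only at jump times of $X$, and that the descending ladder-height distribution of $X$ is atomless by \cite[Proposition I.15]{Ber1996} via the Wiener--Hopf factorization, so $\underline{X}_{\tau^-_x-}=x$ still reduces to a new-minimum jump time of $X$ landing at $x$, which Step 2 excludes.
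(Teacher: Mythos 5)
Your approach is genuinely different from the paper's, and it is an interesting route. The paper transfers the whole problem to the descending ladder height process $\hat H$: it shows the L\'evy measure $\Pi_{\hat H}$ is a finite \emph{atomless} measure (the atomlessness coming from the Vigon identity relating $\Pi_{\hat H}$ to the potential of the ascending ladder height, which in turn is atomless by regularity of $0$ for $(0,\infty)$), and then applies the compensation formula to the jumps of $\hat H$. You instead try to stay at the level of $X$, using the compensation formula for $X$'s own jumps together with atomlessness of the resolvent of $X$. If it worked cleanly this would be more economical because it avoids the Vigon argument entirely; but there are two real gaps in the write-up.

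First, the displayed inequality in Step 2 is wrong for the term $1_{\{X_s=x\}}$. The compensation formula gives
\begin{align}
\bE\Big[\sum_{|\Delta X_s|>1/n} e^{-s} 1_{\{X_s=x\}}\Big]
=\bE\Big[\int_0^\infty e^{-s}\,\Pi\big(\{x-X_{s-}\}\cap\{|z|>1/n\}\big)\diff s\Big],
\end{align}
and this is \emph{not} dominated by $\Pi(\{|z|>1/n\})\bE[\int_0^\infty e^{-s}1_{\{X_s=x\}}\diff s]$; the pre-jump position $X_{s-}$ that matters is $x-a$ for $a$ an atom of $\Pi$, not $x$ itself. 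The conclusion is still true, but the correct argument is to decompose over the (at most countably many) atoms $a_k$ of $\Pi$ and use that $\bE[\int_0^\infty e^{-s}1_{\{X_{s-}=x-a_k\}}\diff s]=0$ for each $k$ by \cite[Proposition I.15]{Ber1996}. As written, the bound does not follow from the compensation formula.

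Second, and more seriously, Step 3's reduction is exactly the heart of the lemma and your argument for it does not close. You correctly anticipate the dangerous scenario: $\gamma=0$ with infinite activity, where $X_{t^*-}=x$ could be produced by an accumulation of small downward jumps none of which touches $x$. Your proposed fix is not correct as stated. The claim that ``the descending ladder-height distribution of $X$ is atomless by \cite[Proposition I.15]{Ber1996} via the Wiener--Hopf factorization'' does not hold: Proposition I.15 concerns the resolvent of $X$ and says nothing direct about the law of $\hat H$; obtaining atomlessness of $\Pi_{\hat H}$ is precisely the nontrivial Vigon-equation step in the paper's proof. Moreover that fact is not what your reduction actually needs. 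What is needed, and what is true, is that irregularity of $0$ for $(-\infty,0)$ forces the descending ladder time process to be a (driftless) compound Poisson subordinator, so that the set of new-minimum times of $X$ is discrete and does not accumulate before $\tau^-_x$. It then follows that $\inf_{t\in[0,\tau^-_x)}X_t$ is \emph{attained} at the last new-minimum time $s_n<\tau^-_x$, that this $s_n$ is a jump time of $X$ (since $\gamma\geq 0$ rules out downward creeping), and that the event in question is contained in $\{\exists s\text{ a jump time of }X:\, X_s=x\}$, which Step 2 (correctly repaired) kills. If you replace the ladder-height claim with this discreteness-of-ladder-times argument and fix the compensation bound, your route does give a valid and arguably slicker proof than the paper's.
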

\begin{proof}
We recall some properties of {the} ladder height processes {(see, e.g.,  \cite[Section 6]{Kyp2014})}. 
Let $H=\{H_t : t\geq 0\}$ and $\hat{H}=\{{\hat{H}}_t : t\geq 0\}$, respectively, be ascending and descending ladder height processes of $X$. {Then, the processes $H$ {and} $\hat{H}$ are subordinators, possibly killed and sent to the cemetery state $+\infty$ at some independent exponential random variable. }
Below, let  $\p^H$ and  $\p^{\hat{H}}$ be the laws of $H$ and $\hat{H}$ when they start at zero.

Since $0$ is regular for $(0, \infty)$ for $X$, 
 it is easy to check that $0$ is regular for $(0, \infty)$ for $H$ {as well}. 
Thus by \cite[Theorem 5.4]{Kyp2014} the potential measure 
$U_H(\diff y)=\bE^H \sbra{\int_0^\infty 1_{\{{H_t} \in \diff y \}} \diff t}$ 
has no atoms {on $(0, \infty)$}. 

{Let $\Pi_{\hat{H}}$ be the \lev measure of $\hat{H}$.}
Since $0$ is irregular for $(-\infty , 0)$ for $X$ and by the definition of the descending ladder height processes, $\Pi_{\hat{H}}$ is a finite measure and $\hat{H}$ has no drift 
({see} \cite[Theorem 6.6 and Section 6.2]{Kyp2014}).
In addition, the measure $\Pi_{\hat{H}}$ has no atoms. 
Indeed, by \cite[Theorem 7.8]{Kyp2014} and since $\hat{\Pi}$ {(denoting the \lev measure of the dual process $-X$)}
has atoms at most countable {points} and 
$U_H$ has no atoms, for $y>0$,
we have by the dominated convergence theorem, for some $k > 0$,
\begin{align*}
\Pi_{\hat{H}}(\{ y \})=
&\lim_{\epsilon \downarrow 0}\Pi_{\hat{H}}(y - \epsilon, \infty)-\Pi_{\hat{H}}(y, \infty)
=\lim_{\epsilon \downarrow 0} k\int_{[0, \infty)} \big(\hat{\Pi} (z+y - \epsilon, \infty)-\hat{\Pi} (z+y, \infty) \big)U_H(\diff z)\\
=&\lim_{\epsilon \downarrow 0} k\int_{[0, \infty)}\hat{\Pi} (z+y - \epsilon, z+y] U_H (\diff z)
= k\int_{[0, \infty)}\hat{\Pi} (\{ z+y\}) U_H(\diff z)=0. 
\end{align*}

Since $0$ is irregular for $(-\infty , 0)$ for $X$ and 
by the definition of the ladder height process,  with $\tau^+_{-x,\hat{H}} :=\inf\{t>0 :\hat{H}_t > -x\}$, we have
$\bP (\inf_{t\in[0, \tau^-_x) }X_t =x )
=\bP^{\hat{H}} \Big( \sup_{t\in [0, \tau^+_{-x,\hat{H}}) }{\hat{H}}_{t} ={-x} \Big)
=\bP^{\hat{H}}\rbra{{\hat{H}}_{\tau^+_{-x,\hat{H}}-} ={-x}}$.
By the compensation formula of the Poisson point processes and since $\hat{H}$ has no drift, we have, {with $\cN_{\hat{H}}$ the Poisson random measure 
on $([0, \infty) \times\bR ,\cB [0, \infty ) \times (0, \infty), \diff s \times \Pi_{\hat{H}}(\diff x)) $ 
associated with the jumps of $\hat{H}$, } 
\begin{multline}
{\bP^{\hat{H}}\rbra{{\hat{H}}_{\tau^+_{-x,\hat{H}}-} ={-x}}}
=\bE^{\hat{H}} \sbra{\int_{[0, \infty) \times (0, \infty)}
1_{\{ {\hat{H}}_{t-}=-x  \}}1_{\{ {\hat{H}}_{t-}+y >-x  \}}  \cN_{\hat{H}}(\diff t \times \diff y)}\\
=\bE^{\hat{H}} \sbra{\int_0^\infty \diff t 
\int_{(0, \infty)} 
 1_{\{ {\hat{H}}_{t}=- x   \}}1_{\{ {\hat{H}}_{t}+y >-x  \}}  \Pi_{\hat{H}}(\diff y) } = \bE^{\hat{H}} \sbra{\int_0^\infty 
 1_{\{ {\hat{H}}_{t}=-x  \}}   \Pi_{\hat{H}}(-x-{\hat{H}}_{t}, \infty) \diff t }. \label{621}
\end{multline}
Since $\Pi_{\hat{H}}$ is a finite measure and has no atoms, and by \cite[Theorem 5.4]{Kyp2014}, the above is equal to $0$, {as desired.} 
\end{proof}

{(iii)} The proof of (iii) comes from the identity $\bE_x\big[e^{-q\tau^-_0}\big]=\bE \big[e^{-q\tau^-_{-x}}\big]$, (ii) and the dominated convergence theorem.

\subsection{The proof of Lemma \ref{Thm201}} \label{Sec0A1}
{(i)} We first prove 
\begin{align}
\bE_x \left[ \int_0^\infty e^{-qt}\absol{f(U^b_t)}\diff t \right]<\infty, \quad {x \in \mathbb{R}. } \label{3}
\end{align}
Without loss of generality, we assume $b=0$. {Because the strong Markov property gives}
\begin{align}
\bE_x  \left[ \int_0^\infty e^{-qt}\absol{f(U^0_t)}\diff t \right]
\leq \bE_x  \left[ \int_0^\infty e^{-qt}\absol{f(X_t)}\diff t \right]+\bE_x \left[ e^{-q \tau^-_0}\right]
\bE  \left[ \int_0^\infty e^{-qt}\absol{f(U^0_t)}\diff t \right],
\end{align}
and by Remark \ref{Rem201}, it suffices to verify only $\bE  \left[ \int_0^\infty e^{-qt}\absol{f(U^0_t)}\diff t \right]< \infty$. 
Let $T^{(0)}_{-} := 0$ and define recursively, {for $n \geq 1$,} 
$T^{(n)}_+=\inf\{t>T^{(n-1)}_- : U^0_t >1\}$ and $T^{(n)}_-=\inf\{t>T^{(n)}_+ : U^0_t =0\}$.
Using the strong Markov property, we have 
$\bE \left[ \int_0^\infty e^{-qt}\absol{f(U^0_t)}\diff t \right] 
= 
A
+ \sum_{n\in\bN} B_n,$
where
\begin{align*}
A :=  \bE \left[ \sum_{n\in\bN} \int_{T^{(n-1)}_-}^{T^{(n)}_+} e^{-qt}\absol{f(U^0_t)}\diff t \right] \quad \textrm{and} \quad
 B_n:= \bE \left[ e^{-qT^{(n)}_+} \bE_{U^0_{T^{(n)}_+} }\left[  \int_0^{\tau^-_0} e^{-qt}\absol{f(X_t)}\diff t \right]\right].
\end{align*}
Here, we have
\begin{align*}
A \leq \bE\left[ \int_0^\infty e^{-qt}|f(U^0_t)| 1_{\{U^0_t\in [0, 1]\}} \diff t \right]
\leq \bE\left[ \int_0^\infty e^{-qt} \sup_{0 \leq y \leq 1} |f(y)| \diff t \right]=\frac{1}{q}\sup_{0 \leq y \leq 1} |f(y)|.
\end{align*}
On the other hand, {for $n \geq 1$,} $U_{T_-^{(n-1)}}^0 = 0$ and hence $(T_+^{(n)} -T_-^{(n-1)}, U^0_{T_+^{(n)}})|_{\mathcal{F}_{T_-^{(n-1)}}} \sim (T_+^{(1)}, U^0_{T_+^{(1)}})$.  Therefore, {with $h(x) := \bE_{x} \left[  \int_0^{\infty} e^{-qt}\absol{f(X_t)}\diff t \right] \geq \bE_{x} \left[  \int_0^{\tau_0^-} e^{-qt}\absol{f(X_t)}\diff t \right]$,} 
\begin{multline*}
B_n \leq \bE \left[  \bE \left[   e^{-qT^{(n)}_+}
{h(U^0_{T^{(n)}_+} )}
 \Big| \mathcal{F}_{T_-^{(n-1)}}\right] \right] = \bE \left[ e^{-qT^{(n-1)}_-}  
 \bE \left[   e^{-qT^{(1)}_+} 
  {h(U^0_{T^{(1)}_+} )}
 \right]\right] \\
  = \bE \left[ e^{-qT^{(n-1)}_-} \right]  \bE \left[   e^{-qT^{(1)}_+} 
   {h(U^0_{T^{(1)}_+} )}
  \right] = {\left(\bE \left[ e^{-qT^{(1)}_-} \right]\right)}^{n-1}  \bE \left[   e^{-qT^{(1)}_+} 
 {h(U^0_{T^{(1)}_+} )}
\right].
\end{multline*}
%
Combining these,
\begin{align}
\bE \left[ \int_0^\infty e^{-qt}\absol{f(U^0_t)}\diff t \right] &\leq \frac{1}{q}\sup_{0 \leq y \leq 1} |f(y)|
+ \sum_{n\in\bN} \left( \bE \left[e^{-qT^{(1)}_{-}}  \right]\right)^{n-1} {\bE \big[ e^{-qT^{(1)}_+} h(U^0_{T^{(1)}_+} )\big]}.
\end{align}

Thus, it suffices to prove that {$\bE \big[ e^{-qT^{(1)}_+} h(U^0_{T^{(1)}_+} ); E_i \big]<\infty$ for $i = 1,2$ where $E_1 := \{ |U^0_{T^{(1)}_+}- U^0_{T^{(1)}_+-} |\leq 1 \}$ and $E_2 := \{ |U^0_{T^{(1)}_+}- U^0_{T^{(1)}_+-} | >  1 \}$.}
By Remark \ref{Rem201}, we have 
$\bE \Big[ e^{-qT^{(1)}_+} h(U^0_{T^{(1)}_+} ); {E_1}
\Big]
{\leq} \sup_{z \in [1, 2]} h(z)<\infty$.  
{By} the compensation {formula} of the Poisson point processes, with {$\bar{U}^0$} the running supremum of $U^0$,
{and $\cN$  the Poisson random measure associated with the jumps of $X$ as in the proof of Proposition \ref{Prop201},} 
\begin{align}
&\bE \left[ e^{-qT^{(1)}_+} h(U^0_{T^{(1)}_+} ); {E_2}
\right]
=\bE \left[ \int_{[0, \infty)\times(1, \infty)} e^{-qt}h(U^0_{t-}+y ) 1_{\{ \bar{U}^0_{t-} \leq 1
\}}\cN(\diff t \times \diff y) \right]\\
=&\bE \left[ \int_{(1, \infty )}\Pi( \diff y)  \int_0^\infty e^{-qt} h(U^0_{t-}+y ) 1_{\{ \bar{U}^0_{t-} \leq 1
\}}\diff t\right] \\
=&\int_{(1, \infty )}\Pi( \diff y) \int_{[0,1]} h(z+y ) 
\bE\left[\int_0^{T^{(1)}_+}e^{-qt} 1_{\{{U^0_{t}}  \in \diff z\}} \diff t\right] 
\leq \frac{1}{q} \int_{(1, \infty )} \Big(\sup_{z\in [0, 1 ]}h(z+y ) \Big)\Pi( \diff y), 
\end{align}
{which} is finite since $h$ is {of} polynomial growth from Remark \ref{Rem201} and by Assumption \ref{Ass101}. {Hence \eqref{3} holds.}

\par
{(ii)}
Fix any arbitrary constant $u>0$. We have 
\begin{align}
\bE_x &\left[\int_{[0, \infty)}e^{-qt} \diff R^0_t \right]
=\sum_{k\in\bN} \bE_x\left[ \int_{[(k-1)u, ku)}e^{-qt} \diff R^0_t \right]\\
\leq& {(-x)} \lor0+\sum_{k\in\bN} \bE_x\left[ -e^{-q(k-1)u}\inf_{s\in [(k-1)u, ku)} (X_s- X_{(k-1)u-}) \right]\\
\leq&(-x)\lor0+\sum_{k\in\bN} e^{-q(k-1)u}{\bE}\left[  -\inf_{s\in [0, u)}X_s \right]
=(-x)\lor0+{\bE}\left[  -\inf_{s\in [0, u)}X_s \right]\frac{1}{1-e^{-qu}}.
\end{align}
From the same argument as \cite[Lemma 3.2]{Nob2019}, this is also finite.

{By (i) and (ii), the proof is complete.}

\subsection{The proof of Lemma \ref{Rem202}}\label{AppA03}
 Since $f$ is convex, either $f$ monotonically decreases to {a nonnegative value}
 or otherwise
  there exists {$\alpha > b$} such that $|f|$ is nondecreasing on $(\alpha, \infty)$. 
{ Thus,  for $x,y\in [b , \infty)$ with $x<y$, we have $|f(x)|\leq M_{b,\alpha}  \lor {|f(y)|}
$ where $M_{b,\alpha} := \sup_{z\in[b, \alpha]}|f(z)|$.}
 
{Now}, for $x>0$, {because $U^{b} \geq U^{b-x}$ (see \eqref{7}) and $R^{b} \geq R^{b-x}$  and hence by integration by parts $\int_{[0, \infty)}e^{-qt} \diff R^{b-x}_t \leq \int_{[0, \infty)}e^{-qt} \diff R^{b}_t$,} 
we have 
\begin{align*}
|v_b (x)| 
&{\leq \bE_x \left[ \int_0^\infty e^{-qt}\absol{f(U^b_t)}\diff t\right] + |C|\bE_x \left[ \int_{[0, \infty)}e^{-qt} \diff R^b_t \right]} \\
&{= \bE \left[ \int_0^\infty e^{-qt}\absol{f(U^{b-x}_t + x)}\diff t\right] + |C|\bE \left[ \int_{[0, \infty)}e^{-qt} \diff R^{b-x}_t \right]} \\
&{\leq \bE \left[ \int_0^\infty e^{-qt}( \absol{f(U^b_t+x)} \vee M_{b,\alpha}) \diff t\right]  + |C|\bE \left[ \int_{[0, \infty)}e^{-qt} \diff R^b_t \right] } \\
&\leq \bE \left[ \int_0^\infty e^{-qt}\absol{f(U^b_t+x)}\diff t\right] +\frac{M_{b,\alpha}}{q} + |C|\bE \left[ \int_{[0, \infty)}e^{-qt} \diff R^b_t \right].
\end{align*}
From Assumption \ref{Ass201}(ii), we have {for some $k_1, k_2$ and $N \in \mathbb{N}$,}
\begin{align}
\bE \left[ \int_0^\infty e^{-qt}\absol{f(U^b_t+x)}\diff t\right]&\leq
\bE \left[ \int_0^\infty e^{-qt}(k_1+k_2\absol{U^b_t+x}^N)\diff t\right]\\
&\leq k_1\bE \left[ \int_0^\infty e^{-qt}\diff t\right]+
k_2\sum_{l=0}^N {N \choose l} x^l \bE \left[ \int_0^\infty e^{-qt}\absol{U^b_t}^{N-l}\diff t\right] ,
\end{align}
which is {of} polynomial growth {because $ \bE \left[ \int_0^\infty e^{-qt}\absol{U^b_t}^{N-l}\diff t\right]$ is finite by  
{Lemma \ref{Thm201}}.

\subsection{The proof of Lemma \ref{Lem201}}\label{Sec0B}
Fix $\varepsilon > 0$. 
Since $f$ is a convex function, we have 
$(f(x)-f(x-\varepsilon)) / \varepsilon\leq {f_+^\prime(x)} \leq (f(x+\varepsilon)-f(x) ) / \varepsilon$, $x \in \mathbb{R}$,
which implies that 
\begin{align}
\bE_x \left[ \int_0^\infty e^{-qt}\absol{{f^\prime_+(U^{b}_t)}}\diff t\right]
\leq \bE_x \left[ \int_{0}^\infty e^{-qt}
\frac{\absol{f({U^b_t} +\varepsilon)}+2\absol{f({U^b_t} )}+\absol{f({U^b_t}  -\varepsilon)}}{\varepsilon}
\diff t\right].
\end{align}
This is finite by \eqref{3}
, as desired. 
\subsection{The proof of Lemma \ref{Lem204}}\label{Sec0D}
Since $f^\prime_+$ is right-continuous {and by \eqref{7}}, the map $b\mapsto \bE_b \left[\int_0^\infty e^{-qt} f^\prime_+(U^{b}_t)\diff t\right]$ is right-continuous. 
In addition, we have $f_-^\prime(x)= \lim_{y\uparrow x}f^\prime_{+} (x)$ and thus $\lim_{b^\prime \uparrow b}\bE_{b^\prime} \left[\int_0^\infty e^{-qt} f^\prime_+(U^{b^\prime}_t)\diff t\right] = \bE_b \left[\int_0^\infty e^{-qt} f^\prime_-(U^{b}_t)\diff t\right]$. In view of \eqref{def_b_star} and because $f'_+$ is nondecreasing,  for $\delta > 0$,
\begin{align*}
\bE_{b^\ast} \left[\int_0^\infty e^{-qt}f^\prime_+(U^{b^\ast }_t -\delta)\diff t\right] = \bE_{b^\ast - \delta} \left[\int_0^\infty e^{-qt}f^\prime_+(U^{b^\ast - \delta}_t)\diff t\right]{\leq} -C\leq \bE_{b^\ast} \left[\int_0^\infty e^{-qt}f^\prime_+(U^{b^\ast}_t)\diff t\right]. 
\end{align*}
Hence, taking $\delta \downarrow 0$, 
$\bE_{b^\ast} \left[\int_0^\infty e^{-qt}f^\prime_-(U^{b^\ast}_t)\diff t\right]\leq -C\leq \bE_{b^\ast} \left[\int_0^\infty e^{-qt}f^\prime_+(U^{b^\ast}_t)\diff t\right]$. 
We define, {if $\bE_{b^\ast} \left[\int_0^\infty e^{-qt}f^\prime_+(U^{b^\ast}_t)\diff t\right] \neq \bE_{b^\ast} \left[\int_0^\infty e^{-qt}f^\prime_-(U^{b^\ast}_t)\diff t \right]$,}
\begin{align}
\varepsilon^{\ast} =  \frac {
\bE_{b^\ast} \left[\int_0^\infty e^{-qt}f^\prime_+(U^{b^\ast}_t)\diff t\right] + C} {
\bE_{b^\ast} \left[\int_0^\infty e^{-qt}f^\prime_+(U^{b^\ast}_t)\diff t\right]- \bE_{b^\ast} \left[\int_0^\infty e^{-qt}f^\prime_-(U^{b^\ast}_t)\diff t\right]}
\end{align}
and set it zero otherwise.
Then, setting $f^\prime_{\varepsilon^\ast}(x):=(1-\varepsilon^\ast) f^\prime_+(x) +\varepsilon^\ast f^\prime_-(x)$, $x \in \R$, we have
\begin{align}
\bE_{b^\ast} \left[\int_0^\infty e^{-qt}f^\prime_{\varepsilon^\ast}(U^{b^\ast}_t)\diff t\right]= -C. \label{24}
\end{align}
By {Lemma \ref{Lem203},} \eqref{24} and the strong Markov property (note that $U_{\tau_{b^*}^-} = b^*$ on $\{ \tau_{b^*}^- < \infty \}$), we have 
\begin{align} \label{35}
\begin{split}
v_{b^*,+}^\prime(x)&=\bE_x \left[ \int_0^{\tau^{-}_{b^*}} e^{-qt} f^\prime_+(X_t)\diff t \right]
-C\bE_x\left[e^{-q\tau^-_{b^*}}\right] \\ &=\bE_x \left[ \int_0^{\tau^{-}_{b^*}} e^{-qt} f^\prime_{\varepsilon^*}(U^{b^\ast}_t)\diff t \right] + \bE_x \left[ \int_{\tau^{-}_{b^*}}^{\infty} e^{-qt} f^\prime_{\varepsilon^\ast}(U^{b^\ast}_t)\diff t \right] = \bE_x \left[ \int_0^{\infty} e^{-qt} f^\prime_{\varepsilon^\ast}(U^{b^\ast}_t)\diff t \right],
\end{split}
\end{align}
where the second equality holds because 
 $\bE_x \left[ \int_0^{\tau^{-}_{b^*}} e^{-qt} f^\prime_+(X_t)\diff t \right] = \bE_x \left[ \int_0^{\tau^{-}_{b^*}} e^{-qt} f^\prime_{\varepsilon^\ast}(X_t)\diff t \right]$ as in the proof of Lemma \ref{Lem203}(ii) and we have $U_t^{b^*} = X_t$ for $t < \tau_{b^*}^-$.
Since $f^\prime_{\varepsilon^\ast}$ is {nondecreasing} and $v_{b^\ast}$ is continuous, $v_{b^\ast}$ is convex.

{To complete the proof, we now confirm} the continuity of $v_{b^\ast, +}^\prime$. 
Since $v_{b^\ast, +}^\prime$ is right derivative and $v_{b^\ast}$ is convex, 
$v_{b^\ast, +}^\prime$ is right-continuous and {thus} it suffices to prove $\lim_{\delta\downarrow0}(v_{b^\ast,+}^{\prime}(x)-v_{b^\ast, +}^{\prime}(x-\delta))=0$ for $x\in\bR$. 
Here we use the same notations as the proof in Lemma \ref{Lem203} {for $b = b^*$}. 
For $x \in \mathbb{R}$ and $\delta >0$,  
\begin{align}
v_{b^\ast, +}^{\prime}(x)-v_{b^\ast,+}^{\prime}(x-\delta)
=&\bE \left[ \int_0^\infty e^{-qt} \left(  f^\prime_{\varepsilon^\ast}(U^{(x),b^*}_t)  -f^\prime_{\varepsilon^\ast}( U^{(x-\delta),b^*}_t)\right) \diff t \right]. \label{29}
\end{align}
By \eqref{14}, \eqref{16} and \eqref{18} with $b$ changed to $b^\ast$, $x$ changed to $x-\delta$ and $x+\varepsilon$ changed to $x$ and since $f^\prime_{\varepsilon^\ast}$ is nondecreasing, {by \eqref{29},}
\begin{multline*}
{0 \leq v_{b^\ast, +}^{\prime}(x)-v_{b^\ast,+}^{\prime}(x-\delta)} \leq\bE \left[ \int_0^{\tau^{(x)}_{b^\ast}} e^{-qt} \left(  f^\prime_{\varepsilon^\ast}(U^{(x),b^*}_t)  -f^\prime_{\varepsilon^\ast}( U^{(x),b^*}_t-\delta)\right) \diff t \right]\\
=\bE_{x}\left[ \int_0^{\tau^-_{b^\ast}} e^{-qt} \left(  f^\prime_{\varepsilon^\ast}(U^{b^\ast}_t)  -f^\prime_{\varepsilon^\ast}( U^{b^\ast}_t-\delta)\right) \diff t \right] 
=\bE_{x}\left[ \int_0^{\tau^-_{b^\ast}} e^{-qt} \left(  f^\prime_{\varepsilon^\ast}(X_t)  -f^\prime_{\varepsilon^\ast}( X_t-\delta)\right) \diff t \right]. 
\end{multline*}
By the convexity of $f$, we have $\lim_{y' \uparrow y}f^\prime_{\varepsilon^\ast}(y') = f'_- (y)$ for $y\in\bR$. Thus,
by the monotone convergence theorem, {with $R^{(q)}(x, \cdot)$ the measure as defined in the proof of Lemma \ref{Lem203}(ii),}
\begin{align}
\lim_{\delta\downarrow0}
\bE_{x}\left[ \int_0^{\tau^-_{b^\ast}}e^{-qt} \left(  f^\prime_{\varepsilon^\ast}(X_t)  -f^\prime_{\varepsilon^\ast}( X_t-\delta)\right) \diff t \right]
&= \bE_{x}\left[ \int_0^{\tau^-_{b^\ast}}e^{-qt} {\lim_{\delta \downarrow 0} } \left( f^\prime_{\varepsilon^\ast}(X_t)  -f^\prime_{\varepsilon^\ast}( X_t-\delta)\right) \diff t \right]\\
&\leq \int_{[b^\ast,\infty)}\left(  f^\prime_{\varepsilon^\ast}(y)  -f^\prime_-( y)\right) R^{(q)}(x, \diff y),
\end{align}
{which is zero
since $f^\prime_{\varepsilon^\ast}$ and $f'_-$ differ only at countable points by the convexity of $f$. }
\par
Therefore, by the convexity of $v_{b^\ast}$, $v_{b^\ast,+}^\prime ({x})=v_{b^\ast}^\prime ({x})$ and $v_{b^\ast}$ belongs to $C^1 (\bR)$.

\subsection{Proof of Lemma \ref{Lem205a}} \label{subsection_proof_Lem205a}
{Fix $x \in \mathbb{R}$.}
By Lemma \ref{Lem204} and because $f$ is differentiable {by assumption},
$v_{b^\ast}^\prime(x)=\bE_x \left[ \int_0^{\infty} e^{-qt} f^\prime(U^{b^\ast}_t)\diff t \right]$. 
For $\varepsilon>0$, we have 
\begin{align*}
 \bE_{x+\varepsilon} \left[ \int_0^{\infty} e^{-qt} f^\prime(U^{b^\ast}_t)\diff t \right] - \bE_x \left[ \int_0^{\infty} e^{-qt} f^\prime(U^{b^\ast}_t)\diff t \right]
=   \bE \left[ \int_0^{\infty} e^{-qt} (f^\prime(U^{(x+\varepsilon),b^*}_t) - f^\prime(U^{(x),b^*}_t) ) \diff t \right], 
\end{align*}
{where $U^{(x),b^*}$ is as defined in the proof of Lemma \ref{Lem203} for $b = b^*$.}
Here, {applying \eqref{18} and then  \eqref{14},}
\begin{align}
& \int_0^\infty e^{-qt} (f'(U^{(x+\varepsilon),b^*}_t)-f'(U^{(x),b^*}_t)) \diff t \\
 &= \int_0^{\tau_{b^\ast}^{(x)}} e^{-qt} (f'(U^{(x),b^*}_t+\varepsilon)-f'(U^{(x),b^*}_t)) \diff t  + \int_{\tau_{b^\ast}^{(x)}}^{\tau_b^{(x+\varepsilon)}} e^{-qt} (f'(U^{(x+\varepsilon),b^*}_t)-f'(U^{(x),b^*}_t)) \diff t. 
\end{align}
\par
We {first} prove
\begin{align}
\bE\left[
\int_{\tau_{b^*}^{(x)}}^{\tau_{b^*}^{(x+\varepsilon)}} e^{-qt} \frac {|f^\prime(U^{(x+\varepsilon),b^*}_t)-f^\prime(U^{(x),b^*}_t)|} \varepsilon \diff t  \right] \xrightarrow{\varepsilon \downarrow 0} 0. \label{51}
\end{align}
For $\varepsilon \in (0, 1)$, {because $U_t^{(x),b^*} \leq U^{(x+\varepsilon),b^*}_t \leq U_t^{(x),b^*} + \varepsilon$ for $\tau_{b^*}^{(x)} \leq t \leq \tau_{b^*}^{(x+\varepsilon)}$ as in \eqref{16} and by the mean value theorem,} 
\begin{multline}
\int_{\tau_{b^*}^{(x)}}^{\tau_{b^*}^{(x+\varepsilon)}} e^{-qt} \frac {|f^\prime(U^{(x+\varepsilon),b^*}_t)-f^\prime(U^{(x),b^*}_t)|} \varepsilon \diff t  \leq 
\int_{\tau_{b^*}^{(x)}}^{\tau_{b^*}^{(x+\varepsilon)}} e^{-qt} \frac{1}{\varepsilon}\left( \int_0^\varepsilon|f^{\prime\prime}(U^{(x),b^*}_t+y)| \diff y \right)\diff t \\
\leq
  \int_{\tau_{b^*}^{(x)}}^{\tau_{b^*}^{(x+\varepsilon)}} e^{-qt} \sup_{0\leq y\leq 1} |f^{\prime\prime}(U^{(x),b^*}_t+y)| \diff t  \leq
 \int_0^\infty e^{-qt} \sup_{0\leq y\leq 1} |f^{\prime\prime}(U^{(x),b^*}_t+y)| \diff t,
\label{30}
\end{multline}
{which is integrable because} $f^{\prime\prime}$ is {of} polynomial growth and by the same argument as the proof of Lemma \ref{Thm201}.
Thus, {by the} dominated convergence theorem and because $\tau_b^{(x + \varepsilon)} \xrightarrow{\varepsilon \downarrow 0} \tau_b^{(x)}$ a.s. from Lemma \ref{Rem201A}(ii),}
\begin{align}
\lim_{\varepsilon\downarrow0}
\bE \left[  \int_{\tau_{b^*}^{(x)}}^{\tau_{b^*}^{(x+\varepsilon)}} e^{-qt} \sup_{0\leq y\leq 1} |f^{\prime\prime}(U^{(x),b^*}_t+y)| \diff t \right]=
\bE \left[  \lim_{\varepsilon\downarrow0}\int_{\tau_{b^*}^{(x)}}^{\tau_{b^*}^{(x+\varepsilon)}} e^{-qt} \sup_{0\leq y\leq 1} |f^{\prime\prime}(U^{(x),b^*}_t+y)| \diff t \right]=0. \label{31}
\end{align}
From {this and }\eqref{30}, we have \eqref{51}. 


\par

{Similarly,} for $\varepsilon\in(0, 1)$ and  $t < \tau_{b^*}^{(x)}$, because $U^{(x+\varepsilon),b^*}_t = U_t^{(x),b^*} + \varepsilon = X_t^{(x)} + \varepsilon$ as in \eqref{14},
{
\begin{align}
\int_0^{\tau_{b^*}^{(x)}}e^{-qt} \frac {|f^\prime(X_t^{(x)}+\varepsilon)-f^\prime(X_t^{(x)})|} \varepsilon \diff t 
\leq \int_0^\infty e^{-qt} \sup_{0\leq y\leq 1} |f^{\prime\prime}( {X_t^{(x)}} +y)| \diff t, 
\end{align}
}
which is integrable,  and so using the dominated convergence theorem, 
\begin{multline*}
\bE\left[
\int_0^{\tau_{b^*}^{(x)}}e^{-qt} \frac {f^\prime(U^{(x+\varepsilon),b^*}_t)-f^\prime(U^{(x),b^*}_t)} \varepsilon \diff t  \right] 
{=\bE\left[\int_0^{\tau_{b^*}^{(x)}}e^{-qt} \frac {f^\prime(X_t^{(x)}+\varepsilon)-f^\prime(X_t^{(x)})} \varepsilon \diff t  \right] }\\
=\bE_x\left[\int_0^{\tau_{b^*}^-}e^{-qt} \frac {f^\prime(X_t+\varepsilon)-f^\prime(X_t)} \varepsilon \diff t  \right] \xrightarrow{\varepsilon \downarrow 0} \bE_x\left[\int_0^{\tau_{b^*}^-}e^{-qt} f^{\prime\prime}(X_t)\diff t  \right] . \label{33}
\end{multline*}
{The left-hand derivative can be derived similarly.}

\par
From the arguments above, we obtain 
$v_{b^\ast}''(x)= \bE_x \left[ \int_0^{\tau^-_{b^\ast }} e^{-qt} f^{\prime\prime}
(X_t)\diff t \right]$ for $x\in\R$. 
This is continuous by the dominated convergence theorem. {Here, note that the continuity holds even for $x=b^\ast$, because $\lim_{x\downarrow0}\tau^-_x= 0 =\tau^-_0 $ $\bP$-a.s. since $X$ has unbounded variation paths. }  


\section*{Acknowledgments.  } 
The authors thank the anonymous referees and the associate and area editors for careful reading of the paper and constructive comments and suggestions. 
K. Noba was supported by JSPS KAKENHI grant no. 18J12680 and 21K13807. K. Yamazaki was in part supported by MEXT KAKENHI grant no.\ 19H01791 and 20K03758 and the start-up grant by the School of Mathematics and Physics of the University of Queensland. Both authors were supported by JSPS Open Partnership Joint Research Projects grant no. JPJSBP120209921. 




\end{document}